\title[Functionals representing Sobolev norms]{Families of functionals representing  Sobolev norms}
\author[H. Brezis  \ \  \  A. Seeger \ \ \ J. Van Schaftingen \ \ \ P.-L. Yung]
{Ha\"\i m Brezis \ \ \ Andreas Seeger \ \ \ Jean Van Schaftingen \ \ \ Po-Lam Yung}
\address{Department of Mathematics\\
  Rutgers University, Hill Center, Busch Campus\\ 
  110 Frelinghuysen Road, Piscataway, NJ 08854, USA}
  \address{
  Departments of Mathematics and Computer Science\\ Technion, Israel Institute of Technology\\ 32.000 Haifa, Israel}
\address{
  Laboratoire Jacques-Louis Lions\\
  Sorbonne Universit\'es, UPMC Universit\'e Paris-6, 4  place Jussieu\\
  75005 Paris, France}
\email{brezis@math.rutgers.edu}
\address{Department of Mathematics \\ 
University of Wisconsin, Madison \\
480 Lincoln Drive, Madison, WI, 53706, USA}
\email{seeger@math.wisc.edu}
\address{Universit\'e catholique de Louvain\\ 
Institut de Recherche en Math\'ematique et Physique\\
Chemin du Cyclotron 2 bte L7.01.01\\
1348 Louvain-la-Neuve\\
Belgium}
\email{Jean.VanSchaftingen@UCLouvain.be}
\address{Mathematical Sciences Institute \\
Australian National University \\
Canberra ACT 2601 \\
Australia} 
\email{PoLam.Yung@anu.edu.au}
\newcommand{\defeq}{\coloneqq}
\newtheorem{theorem}{Theorem}[section]
\newtheorem{proposition}[theorem]{Proposition}
\newtheorem{lemma}[theorem]{Lemma}
\newtheorem{corollary}[theorem]{Corollary}
\newtheorem{examplebf}[theorem]{Example}
\theoremstyle{definition}
\theoremstyle{remark}
\newtheorem{remark}[theorem]{Remark}
\newtheorem*{remark*}{Remark}
\newtheorem*{remarks*}{Remarks}
\numberwithin{equation}{section}
\DeclarePairedDelimiter{\abs}{\lvert}{\rvert}
\DeclarePairedDelimiter{\norm}{\lVert}{\rVert}
\DeclarePairedDelimiter{\quasinorm}{[}{]}
\DeclarePairedDelimiter{\brk}{(}{)}
\DeclarePairedDelimiter{\set}{\{}{\}}
\DeclarePairedDelimiterX\dualprod[2]{\langle}{\rangle}{#1, #2}
\DeclarePairedDelimiterX\intvo[2]{(}{)}{#1, #2}
\DeclarePairedDelimiterX\intvc[2]{[}{]}{#1, #2}
\DeclarePairedDelimiterX\intvl[2]{(}{]}{#1, #2}
\DeclarePairedDelimiterX\intvr[2]{[}{)}{#1, #2}
\newcommand{\st}{\;:\;}
\newcommand{\Rset}{\mathbb{R}}
\newcommand{\Nset}{\mathbb{N}}
\newcommand{\Sset}{\mathbb{S}}
\newcommand{\dif}{\,\mathrm{d}}
\renewcommand\subsection{\@startsection{subsection}{2}%
   \z@{.5\linespacing\@plus.7\linespacing}{.1\linespacing}%
   {\normalfont\itshape}} 
\newcommand{\R}{\ensuremath{\mathbb{R}}}
\renewcommand{\S}{\ensuremath{\mathbb{S}}}
\newcommand{\eps}{\ensuremath{\varepsilon}}
\def\bbone{{\mathbbm 1}}
\def\om{\omega}
\newcommand{\Be}{\begin{equation}}
\newcommand{\Ee}{\end{equation}}
\def\lc{\lesssim}
\def\la{{\lambda}}
\def\ga{{\gamma}}
\def\ka{{\kappa}}
\def\cf{{\it cf}}
\def\loc{{\text{\rm loc}}}
\def\supp{{\text{\rm supp}}}
\def\inn#1#2{\langle#1,#2\rangle}
\def\Biginn#1#2{\Bigl\langle#1,#2\Bigr\rangle}
\def\fM{{\mathfrak {M}}}
\def\fS{{\mathfrak {S}}}
\def\bbN{{\mathbb {N}}}
\def\bbR{{\mathbb {R}}}
\def\bbS{{\mathbb {S}}}
\def\bbZ{{\mathbb {Z}}}
\def\cA{{\mathcal {A}}}
\def\cE{{\mathcal {E}}}
\def\cL{{\mathcal {L}}}
\def\cM{{\mathcal {M}}}
\def\cN{{\mathcal {N}}}
\def\cR{{\mathcal {R}}}
\def\cV{{\mathcal {V}}}
\def\cW{{\mathcal {W}}}
\begin{document}
\begin{abstract}
We obtain new characterizations of the Sobolev spaces $\dot W^{1,p}(\mathbb R^N)$ and the bounded variation space $\dot {BV}(\mathbb R^N)$. The characterizations are in terms of the functionals $\nu_{\ga} (E_{\lambda,\gamma/p}[u])$ where 
\[
E_{\lambda,\gamma/p}[u]= \Big\{(x,y )\in \mathbb R^N \times \mathbb R^N \colon x \neq y, \, \frac{|u(x)-u(y)|}{|x-y|^{1+\gamma/p}}>\lambda\Big\}
\]
and the measure $\nu_\gamma$ is given by $\mathrm{d} \nu_\gamma(x,y)=|x-y|^{\gamma-N} \mathrm{d} x \mathrm{d} y$. We provide characterizations  which involve  the $L^{p,\infty}$-quasi-norms $\sup_{\lambda>0} \la\, \nu_{\gamma} (E_{\lambda,\gamma/p}[u]) ^{1/p}$  and also exact formulas via   corresponding limit functionals, with  the limit for   $\lambda\to\infty$ when $\gamma>0$ and  the limit for  $\lambda\to 0^+$ when  $\gamma<0$. The  results unify and substantially extend previous work  by Nguyen and by Brezis, Van Schaftingen and Yung. For $p>1$ the characterizations hold for all $\gamma\neq 0$. For  $p=1$ the upper bounds for the $L^{1,\infty}$ quasi-norms  fail in the range $\gamma\in [-1,0) $; moreover in this case the limit functionals represent the $L^1$ norm of the gradient   for $C^\infty_c$-functions but not for generic $\dot W^{1,1}$-functions. For this situation   we provide new counterexamples  which are built on self-similar sets of dimension $\gamma+1$.  For $\gamma=0$ the  characterizations of Sobolev spaces fail;  however we  obtain a new formula for the Lipschitz norm via the expressions $\nu_0(E_{\lambda,0}[u])$.
\end{abstract}
\keywords{Sobolev norms;  non-convex functionals, non-local functionals, Marcinkiewicz spaces, Cantor sets and functions}
\subjclass[2010]{26D10 (26A33, 35A23, 42B25, 42B35, 46E30, 46E35)}
\maketitle

\section{Introduction}
In this article, we are  concerned with various ways in which we can recover the Sobolev semi-norm $\|\nabla u\|_{L^p(\bbR^N)}$ via positive non-convex  functionals involving  differences $u(x)-u(y)$.

We begin by mentioning two relevant results already in the literature. A theorem of   H.-M. Nguyen \cite{Nguyen06} (see also \citelist{\cite{BN2018}\cite{BN2020}}) states that for  $1<p<\infty$ and $u$ in the inhomogeneous Sobolev space $W^{1,p}(\bbR^N)$, 
\begin{equation} \label{eq:Nguyen}
\lim_{\lambda \searrow 0} \lambda^p \iint_{|u(x)-u(y)| > \la} |x - y|^{-p-N} \dif x  \dif y = \frac{\ka(p,N)}{p} \norm{\nabla u}_{L^p(\R^N)}^p
\end{equation}
with
\begin{equation} \label{eq:kpN}
\ka (p,N) \defeq \int_{\Sset^{N-1}} |e \cdot \omega|^p \dif \omega
=\frac{2\Gamma(\tfrac{p+1}{2})\pi^{\frac{N-1}{2}}}{\Gamma(\tfrac{N+p}{2})},
\end{equation}
and $e$ is any unit vector in $\R^N$. As shown in \cite{BN2018},  \eqref{eq:Nguyen} still holds for all $u \in C^1_c(\R^N)$ when $p=1$ but fails for general $u\in  W^{1,1}(\R^N)$. The limit formula \eqref{eq:Nguyen}  may be compared to a  theorem  of three of the authors  \cite{Brezis_VanSchaftingen_Yung_2021}, which states  that for all $u \in C^{\infty}_c(\R^N)$ and $1 \leq p < \infty$, one has
\begin{equation} \label{eq:BVYlimit}
\lim_{\lambda \to \infty} \lambda^p \mathcal{L}^{2 N}\big (\{(x,y) \in \R^N \times \R^N \colon |u(x)-u(y)| > \la |x-y|^{1+\frac Np} \}\big) = \frac{\ka(p,N)}{N} \norm{\nabla u}_{L^p(\R^N)}^p
\end{equation}
where $\mathcal{L}^{2 N}$ denotes the Lebesgue measure on $\R^N \times \R^N$.
Our first result, namely \cref{theorem_weighted_reverse} below, provides an extension of \eqref{eq:Nguyen} and \eqref{eq:BVYlimit} that unifies the two statements. Before we state the theorem, we introduce some notations that will be used throughout the paper. 
 
First, for  Lebesgue measurable subsets $E$ of $\bbR^{2N} = \bbR^N \times \bbR^N$ and $\ga \in \R$,  we define 
\Be  \label{eq:nugamma-definition} 
\nu_\gamma(E)\defeq \iint
\limits_{\substack{(x,y) \in E\\ x\neq y}} 
|x-y|^{\gamma-N} \dif x \dif y.
\Ee
In particular, when $\ga = N$, $\nu_N$ is just the Lebesgue measure on $\R^{2N}$. If $u$ is a measurable function on $\R^N$ and $b \in \R$, we define, for $(x,y) \in \bbR^N \times \bbR^N$ with $x\neq y$, a difference quotient
\Be
\label{Qbetadef} Q_b u(x,y) \defeq  \frac{u(x)-u(y)}{|x-y|^{1+b}}; 
\Ee 
moreover, we define, for $\lambda > 0$, the superlevel set of $Q_bu$ at height $\la$ by
\begin{equation} \label{Elg_def}
E_{\lambda,b}[u] \defeq \Big\{(x,y) \in \R^N \times \R^N \colon x \ne y, \,| Q_{b} u(x,y)| > \lambda \Big\}.
\end{equation}
We will  denote by  $\dot W^{1,p}(\bbR^N)$, $p \ge 1$  the homogeneous Sobolev space, i.e.\ the space of $L^1_\loc(\bbR^N)$ functions for which the distributional gradient $\nabla u$ belongs to $L^p(\bbR^N)$, with the   semi-norm  $\|u\|_{\dot W^{1,p}}\defeq \norm{\nabla u}_{L^p(\bbR^N)}.$  The  inhomogeneous Sobolev space $W^{1,p}$ is the subspace of $\dot W^{1,p}$-functions $u$ for which $u\in L^p$, and we set $\|u\|_{W^{1,p}} \defeq \|u\|_{L^p}+\|\nabla u\|_{L^p}$. For $p=1$ we will also consider the space $\dot {BV}(\bbR^N)$ of functions of bounded variations, i.e.\ locally integrable functions $u$ for which the gradient $\nabla u\in \cM$ belongs to  the space $\cM$ of  $\bbR^N$-valued bounded Borel measures and we put $\|u\|_{\dot {BV}} \defeq \|\nabla u\|_\cM$; furthermore, let $BV \defeq \dot{BV} \cap L^1$. In the dual formulation, with $C^1_c$ denoting the space of $C^1$ functions with compact support,
\[
\|u\|_{\dot{BV} } \defeq \sup\Big\{ \Big| \int_{\R^N} u\, \mathrm{div}(\phi) \Big|: \phi\in C^1_c(\bbR^N,\bbR^N),\,\|\phi\|_\infty\le 1\Big\}.
\]
For general background material on Sobolev spaces see \cite{Brezis_2011}, \cite{stein-diff}.

\begin{theorem}
\label{theorem_weighted_reverse}
Suppose $N \geq 1$, $1 \leq p < \infty$, $\gamma\in \R \setminus \{0\}$. \begin{enumerate}[(a)]
\item If  $\gamma > 0$,  then  for all $u\in \dot W^{1,p}(\R^N)$ 
\begin{equation} \label{eq:lim_weighted}
\lim_{\lambda \to +\infty}   \lambda^p \nu_\gamma(E_{\la, \gamma/p}[u]) = \frac{\ka(p,N)}{|\ga|} \norm{\nabla u}_{L^p(\R^N)}^p.
\end{equation}
\item If either $\gamma < 0$, $p>1$ or $\gamma<-1$,  $p=1$  then for all $u\in  \dot W^{1,p}(\R^N) $
\begin{equation} \label{eq:lim_weighted_neg} 
\lim_{\lambda \searrow 0}  \lambda^p  \nu_\gamma(E_{\la, \gamma/p}[u]) = \frac{\ka(p,N)}{|\ga|} \norm{\nabla u}_{L^p(\R^N)}^p.
\end{equation}
\item \label{itemc}
 If $p=1$ and $-1\le \ga<0$  then   \eqref{eq:lim_weighted_neg}
remains true for all $u\in C^1_c(\R^N)$  but fails for generic $u\in \dot W^{1,1}(\R^N)$. However we still have for all $u\in \dot W^{1,1} (\bbR^N)$
\Be\label{eq:liminf} 
\liminf_{\la\searrow 0} \lambda \nu_\ga(E_{\la,\ga}[u]) \ge \frac{\kappa(1,N)}{|\gamma|}  \norm{\nabla u}_{L^1(\R^N)}.
\Ee
\end{enumerate}
\end{theorem}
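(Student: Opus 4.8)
The plan is to establish the three assertions of \cref{itemc} in turn, all based on rewriting \eqref{eq:nugamma-definition} in polar coordinates: putting $x=y+r\omega$,
\[
\lambda\,\nu_\gamma\bigl(E_{\lambda,\gamma}[u]\bigr)=\lambda\int_{\R^N}\int_{\S^{N-1}}\int_0^\infty \bbone\bigl[\,|u(y+r\omega)-u(y)|>\lambda r^{1+\gamma}\,\bigr]\,r^{\gamma-1}\dif r\,\dif\omega\,\dif y .
\]

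\emph{The case $u\in C^1_c(\R^N)$.} Fix $\delta>0$ and split the $r$--integral at $\delta$. In the inner piece $r<\delta$, substitute $r=\lambda^{1/|\gamma|}\sigma$: the prefactor $\lambda$ is exactly cancelled and the inner integral becomes $\int_0^{\delta\lambda^{-1/|\gamma|}}\bbone[\,|u(y+\lambda^{1/|\gamma|}\sigma\omega)-u(y)|>\lambda^{1/|\gamma|}\sigma^{1+\gamma}\,]\,\sigma^{\gamma-1}\dif\sigma$. Since $|u(y+h\omega)-u(y)|\le\|\nabla u\|_\infty h$, the indicator vanishes for $\sigma\le\|\nabla u\|_\infty^{1/\gamma}$ regardless of $\lambda$; hence the inner integral is at most $\|\nabla u\|_\infty/|\gamma|$ and is supported in $\{y:\dist(y,\supp u)\le\delta\}$, so dominated convergence applies. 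Differentiability of $u$ gives, for each fixed $\sigma$, that the indicator converges to $\bbone[\,|\nabla u(y)\cdot\omega|>\sigma^\gamma\,]$, and $\int_0^\infty\bbone[|\nabla u(y)\cdot\omega|>\sigma^\gamma]\sigma^{\gamma-1}\dif\sigma=|\nabla u(y)\cdot\omega|/|\gamma|$; so the inner piece tends to $\tfrac1{|\gamma|}\int_{\R^N}\int_{\S^{N-1}}|\nabla u(y)\cdot\omega|\dif\omega\,\dif y=\tfrac{\kappa(1,N)}{|\gamma|}\|\nabla u\|_{L^1}$ by \eqref{eq:kpN}, for every $\delta>0$. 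In the outer piece $r\ge\delta$ the indicator forces $y\in\supp u$ or $y+r\omega\in\supp u$, and $\int_\delta^\infty r^{\gamma-1}\dif r<\infty$ because $\gamma-1<-1$; hence the outer triple integral is bounded uniformly in $\lambda$ and the outer contribution is $O(\lambda)$. Letting $\lambda\searrow 0$ yields \eqref{eq:lim_weighted_neg} for $u\in C^1_c(\R^N)$.

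\emph{The lower bound \eqref{eq:liminf}.} For arbitrary $u\in\dot W^{1,1}(\R^N)$, retain only the inner piece $r<\delta$ and replace dominated convergence by Fatou's lemma. Fixing an ACL representative of $u$, for a.e.\ $\omega$ and a.e.\ $y$ the map $t\mapsto u(y+t\omega)$ is absolutely continuous with a.e.\ derivative $\nabla u(y+t\omega)\cdot\omega$, and $y$ is a Lebesgue point of this derivative along the line; therefore for each $\sigma$ the rescaled indicator converges to $\bbone[\,|\nabla u(y)\cdot\omega|>\sigma^\gamma\,]$. Applying Fatou first in $(y,\omega)$ and then in $\sigma$ (using $\delta\lambda^{-1/|\gamma|}\to\infty$) gives $\liminf_{\lambda\searrow0}\lambda\,\nu_\gamma(E_{\lambda,\gamma}[u])\ge\tfrac1{|\gamma|}\int_{\R^N}\int_{\S^{N-1}}|\nabla u(y)\cdot\omega|\dif\omega\,\dif y=\tfrac{\kappa(1,N)}{|\gamma|}\|\nabla u\|_{L^1}$. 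This part of the argument uses only $\gamma<0$, not $\gamma<-1$, which is precisely why \eqref{eq:liminf} persists for all $\gamma\in[-1,0)$ even though the matching upper bound does not.

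\emph{Failure of \eqref{eq:lim_weighted_neg} for generic $u\in\dot W^{1,1}$.} This is the substantive point of \cref{itemc}, and the main obstacle. We construct a counterexample in $N=1$ (the general case by a routine modification: let $u$ depend on $x_1$ as below and on the remaining variables through a fixed cutoff). Pick an integer $m\ge2$ and the ratio $\rho=m^{-1/d}$, where $d:=1+\gamma\in[0,1)$, and let $K\subset[0,1]$ be the self-similar Cantor set generated by $m$ similarities of ratio $\rho$ with disjoint images, so $\dim_H K=d$ and $m\rho^{d}=1$. For suitable coefficients $c_n>0$ with $\sum_n c_n<\infty$, put $u=\sum_{n\ge1}c_n u_n$, where $u_n$ is a piecewise--$C^1$ ``stage--$n$ staircase'' whose derivative lives on the $m^n$ intervals of the $n$-th generation of $K$ and has $\|u_n'\|_{L^1}\lesssim1$; then $u\in\dot W^{1,1}(\R)$. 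The exponent $d=1+\gamma$ is forced by scaling: for pairs $(x,y)$ straddling a stage--$n$ interval of diameter $\sim r$ the kernel contributes $\lesssim\int_0^r t^{\gamma}\dif t\sim r^{1+\gamma}$, and since there are $m^n$ such intervals with $m^n\rho^{n(1+\gamma)}=1$, each generation contributes to $\lambda\,\nu_\gamma(E_{\lambda,\gamma}[u])$ an amount independent of the scale. As roughly $\log(1/\lambda)$ generations are ``resolved'' when the height equals $\lambda$ and these contributions add rather than telescope, an appropriate choice of $(m,\rho,c_n)$ forces $\limsup_{\lambda\searrow0}\lambda\,\nu_\gamma(E_{\lambda,\gamma}[u])$ to be strictly larger than $\tfrac{\kappa(1,N)}{|\gamma|}\|\nabla u\|_{L^1}$ — indeed one can arrange it to equal $+\infty$ — so by \eqref{eq:liminf} the limit in \eqref{eq:lim_weighted_neg} cannot exist. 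Making this rigorous requires (i) working with a genuine $\dot W^{1,1}$ function rather than the Cantor--Lebesgue function, which only lies in $BV$, while retaining exact self-similarity; and (ii) controlling the contributions omitted from the heuristic — pairs straddling $K$ at several generations at once, pairs internal to a single ramp, and overlaps among the $u_n$ — showing they alter the answer by at most bounded factors and do not cancel the blow-up. The natural tool is a renewal-type recursion: using $K=\bigcup_i f_i(K)$, relate $\nu_\gamma(E_{\lambda,\gamma}[u])$ to $m$ rescaled copies of the same quantity at a comparable height plus a controlled diagonal remainder, and iterate along a geometric sequence $\lambda_k\to0$; bounding the diagonal remainder uniformly along the iteration is the delicate step.
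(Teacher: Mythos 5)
Your two positive assertions in part (c) are handled correctly, and essentially as in the paper: the rescaling $r=\la^{1/|\ga|}\sigma$ followed by dominated convergence for $u\in C^1_c$ is a reorganization of \cref{lem:limsupsN>1}(ii) plus \cref{lem:liminfs}, and your Fatou argument for \eqref{eq:liminf} (which indeed only needs $\ga<0$) is exactly \cref{lem:liminfs}. The problem is that you prove only part (c). Parts (a) and (b) assert the limit formulas for \emph{all} $u\in\dot W^{1,p}$ (for $\ga>0$ and any $p$, for $\ga<0$ with $p>1$, and for $\ga<-1$ with $p=1$), and your upper-bound argument uses $\|\nabla u\|_{L^\infty}<\infty$ and compact support in an essential way, so it does not go beyond $C^1_c$. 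The passage to general $\dot W^{1,p}$ is not a routine density argument: since $u\mapsto \la^p\nu_\ga(E_{\la,\ga/p}[u])$ is not subadditive, one needs the uniform weak-type estimates \eqref{eq:Marc-p} and \eqref{eq:Marc-1} of \cref{theorem_weighted_pgr1,theorem_weighted_pequal1} to control $\sup_{\la>0}\la^p\nu_\ga(E_{\delta\la,\ga/p}[u_n-u])$ in the approximation step (see \eqref{eq:argument}); the failure of these bounds for $p=1$, $\ga\in[-1,0)$ is precisely why that range is excluded from part (b). Your proposal neither proves nor invokes these estimates, so parts (a) and (b) are missing, and even in part (c) the extension of \eqref{eq:liminf} from the heuristic "a.e.\ line, a.e.\ Lebesgue point" statement to a genuine a.e.\ statement on $\R^N\times\S^{N-1}$ deserves the care given in \cref{lem:Lebesguevariant}.

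The negative assertion of part (c) --- generic failure in $\dot W^{1,1}$ --- is not proved either. You have identified the correct construction (self-similar Cantor sets of dimension $1+\ga$, as in \cref{lem:wk}) and the correct mechanism (a renewal recursion in which each generation contributes a fixed amount), but you explicitly defer ``the delicate step'' of controlling the remainder terms along the iteration, and that step is the entire content of the lower bound \eqref{eq:Qbeta-lowerbd_p}; the paper avoids any cancellation issue by keeping only the three corner rectangles $[0,\rho]^2$, $[1-\rho,1]^2$ and $[0,\rho]\times[1-\rho,1]$ at each stage, so that all discarded terms are nonnegative. Moreover, ``fails for generic $u$'' is a Baire-category statement: one needs a compactly supported $W^{1,1}$ counterexample with $\nu_\ga(E_{\la,\ga}[u])=\infty$ for all $\la$ (\cref{prop:comp-supp-example}), the closedness and nowhere-density argument of \cref{proposition:generic}, and the localized weak-type bound of \cref{lem:elementary-LL} that makes the closedness argument work. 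None of this appears. Finally, your construction with ratio $\rho=m^{-1/(1+\ga)}$ degenerates at $\ga=-1$, which is included in part (c) and requires a separate example (the paper uses mollified indicators of balls, \eqref{eq:vmdef}).
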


Formula \eqref{eq:Nguyen} is the  special case of \eqref{eq:lim_weighted_neg} with $\gamma = -p$, and formula  \eqref{eq:BVYlimit} is the  special case of \eqref{eq:lim_weighted} with $\gamma = N$. Note that our result concerns functions in the homogeneous Sobolev space $\dot{W}^{1,p}$; we do not require $u$ to be in $L^p$.

\begin{remarks*} 
(i) The reader will note the resemblance of \eqref{eq:lim_weighted_neg} and \eqref{eq:lim_weighted} and  may wonder why  in \eqref{eq:lim_weighted_neg}, for $\ga<0$,  one is concerned with the limit as $\lambda \searrow 0$ and in \eqref{eq:lim_weighted}, for $\ga>0$,  one takes  the limit as  $\lambda \to \infty$. In the proofs of these formulas one relates limits involving $\la\nu_\gamma(E_{\la,\ga/p}[u])^{1/p} $ to (the absolute value of) limits of directional difference quotients $\delta^{-1}(u(x+\delta \theta)-u(x) ) $ with increment  $\delta=\la^{-p/\gamma}$,  and in order to recover the directional derivative $\inn{\theta}{\nabla u(x)}$ we need to let  $\delta\to 0$, which suggests that we need to take $\lambda \to \infty$ or $\lambda \searrow 0$ depending on the sign of $\gamma$. For the calculations  see the proofs  of \cref{lem:liminfs} and \cref{lem:limsupsN>1} below. 

\noindent (ii) The failure of  \eqref{eq:lim_weighted_neg} for $p=1$, $\ga\in [-1,0)$ and $u\in \dot W^{1,1}(\bbR^N)$ is generic in  the sense of Baire category. It may happen that $\lim_{\la \searrow 0}  \la\nu_\ga(E_{\la,\ga}[u])=\infty$. This phenomenon was originally revealed when $\ga=-1$ by A. Ponce and is presented in \cite{Nguyen06}, see also \cite[Pathology~1]{BN2018}.  For stronger statements and more information see \cref{thm:counterexamples}. For $\gamma\in (-1,0)$ we provide new examples based on self-similarity considerations. For discussion of failure in the case $\ga=0$ see \cref{thm:counterexamples-gamma=0} below. The special case of \eqref{eq:liminf} for $\gamma=-1$ was already established in \cite[Proposition 1]{BN2018}.
\end{remarks*}

When $p=1$ we can also consider what happens if one allows functions in $\dot{BV}(\bbR^N)$ in \eqref{eq:lim_weighted} and \eqref{eq:lim_weighted_neg}. For $\gamma=N$ in particular Poliakovsky \cite{poliakovsky} asked whether the limit formulas remain valid in this generality (with $\|\nabla u\|_{L^1}$ replaced by $\|\nabla u\|_\cM$). 
We provide a negative answer:
\begin{proposition} 
\label{prop:failureoflimitforBV} 
\begin{enumerate}[(i)]
\item The analogues of the limiting formulas  \eqref{eq:lim_weighted} for $\gamma>0$, $p=1$ and  \eqref{eq:lim_weighted_neg} for $\gamma<0$,  $p=1$,  with  $\|\nabla u\|_{\cM}$ on the right hand side, fail for suitable $u\in \dot{ BV}$.

\item Specifically, let $\Omega\subset \bbR^N$ be a bounded domain  with smooth boundary and let  $u$ be the characteristic function of $\Omega$. The limits $ \lim_{\lambda \to \infty}   \lambda \nu_\gamma(E_{\la, \gamma}[u]) $  for $\gamma>0$ and $ \lim_{\lambda \to 0+}   \lambda \nu_\gamma(E_{\la, \gamma}[u]) $ for $\gamma<-1$ exist, but they  are not equal to $|\gamma|^{-1}\kappa(1,N)\|\nabla u\|_\cM$.
\end{enumerate}
 \end{proposition}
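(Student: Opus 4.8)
The plan is to evaluate both limits explicitly for the characteristic function $u=\chi_\Omega$ and to see that the constant obtained is $\kappa(1,N)/\abs{\gamma+1}$ instead of the expected $\kappa(1,N)/\abs{\gamma}$. Since $u$ takes only the values $0$ and $1$, for $x\neq y$ one has $\abs{Q_\gamma u(x,y)}>\lambda$ exactly when $\abs{u(x)-u(y)}=1$ and $\abs{x-y}^{1+\gamma}<1/\lambda$. With $\delta\defeq\lambda^{-1/(1+\gamma)}$ the latter condition reads $\abs{x-y}<\delta$ when $\gamma>0$, and $\abs{x-y}>\delta$ when $\gamma<-1$; in both regimes $\delta\searrow 0$ (as $\lambda\to\infty$ if $\gamma>0$, as $\lambda\searrow 0$ if $\gamma<-1$). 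Using $\chi_{\{\abs{u(x)-u(y)}=1\}}=\abs{u(x)-u(y)}$, Fubini, and the substitution $z=y-x$, I would rewrite
\[
\nu_\gamma\bigl(E_{\lambda,\gamma}[u]\bigr)=\int_{\{\abs{z}<\delta\}}\abs{z}^{\gamma-N}\Phi(z)\dif z\ \ (\gamma>0),\qquad\nu_\gamma\bigl(E_{\lambda,\gamma}[u]\bigr)=\int_{\{\abs{z}>\delta\}}\abs{z}^{\gamma-N}\Phi(z)\dif z\ \ (\gamma<-1),
\]
where $\Phi(z)\defeq\int_{\R^N}\abs{u(x+z)-u(x)}\dif x$.

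The key input is the behaviour of $\Phi$ near the origin. Since $\partial\Omega$ is compact and smooth, working in a tubular neighbourhood of $\partial\Omega$ parametrized by $x=\sigma+s\,\mathbf n(\sigma)$, $(\sigma,s)\in\partial\Omega\times(-\rho_0,\rho_0)$, and using that the signed distance to $\partial\Omega$ has gradient $\mathbf n$ there, one sees that $u(x+z)\neq u(x)$ occurs, to leading order, on a set of $s$-length $\abs{\mathbf n(\sigma)\cdot z}+O(\abs{z}^2)$, whence
\[
\Phi(z)=\abs{z}\,\psi(z/\abs{z})+O(\abs{z}^2)\quad\text{uniformly in the direction},\qquad\psi(\theta)\defeq\int_{\partial\Omega}\abs{\theta\cdot\mathbf n(\sigma)}\dif\mathcal H^{N-1}(\sigma);
\]
I would also use the crude bounds $\Phi(z)\le\abs{z}\,\mathcal H^{N-1}(\partial\Omega)$ and $\Phi(z)\le 2\abs{\Omega}$. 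Inserting this asymptotics, passing to polar coordinates, and invoking
\[
\int_{\Sset^{N-1}}\psi(\theta)\dif\theta=\int_{\partial\Omega}\Bigl(\int_{\Sset^{N-1}}\abs{\theta\cdot\mathbf n(\sigma)}\dif\theta\Bigr)\dif\mathcal H^{N-1}(\sigma)=\kappa(1,N)\,\mathcal H^{N-1}(\partial\Omega)
\]
(Fubini and \eqref{eq:kpN}), one gets $\nu_\gamma(E_{\lambda,\gamma}[u])=\frac{\kappa(1,N)\mathcal H^{N-1}(\partial\Omega)}{\gamma+1}\delta^{\gamma+1}+O(\delta^{\gamma+2})$ when $\gamma>0$. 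When $\gamma<-1$ I would split the $z$-integral at a fixed small radius $\eta$: on $\{\delta<\abs{z}<\eta\}$ the same computation yields $\frac{\kappa(1,N)\mathcal H^{N-1}(\partial\Omega)}{\abs{\gamma+1}}\delta^{\gamma+1}$ with remainder $o(\delta^{\gamma+1})+O(1)$, while on $\{\abs{z}>\eta\}$ the bound $\Phi(z)\le 2\abs{\Omega}$ together with $\gamma<0$ makes the contribution $O(1)$. Multiplying by $\lambda=\delta^{-(1+\gamma)}$ and letting $\delta\searrow 0$ (note that $\delta^{-(1+\gamma)}\to 0$ when $\gamma<-1$, which kills the $O(1)$ terms) gives in both cases
\[
\lim\ \lambda\,\nu_\gamma\bigl(E_{\lambda,\gamma}[u]\bigr)=\frac{\kappa(1,N)}{\abs{\gamma+1}}\,\mathcal H^{N-1}(\partial\Omega)=\frac{\kappa(1,N)}{\abs{\gamma+1}}\,\|\nabla u\|_\cM.
\]

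Because $\abs{\gamma+1}\neq\abs{\gamma}$ for every $\gamma\in\R$ and $\mathcal H^{N-1}(\partial\Omega)>0$, this value differs from $\abs{\gamma}^{-1}\kappa(1,N)\|\nabla u\|_\cM$; this proves (ii), and hence (i) for $\gamma>0$ and $\gamma<-1$. For the remaining range $-1\le\gamma<0$ in (i) the same example still works: if $-1<\gamma<0$ then $\{\abs{u(x)-u(y)}=1\}$ has finite $\nu_\gamma$-measure (from $\Phi(z)\lesssim\abs{z}$ near $0$ and $\gamma<0$ at infinity), so $E_{\lambda,\gamma}[u]$ exhausts it as $\lambda\searrow 0$ and $\lambda\,\nu_\gamma(E_{\lambda,\gamma}[u])\to 0$; if $\gamma=-1$ the condition $\abs{x-y}^{1+\gamma}<1/\lambda$ reads $1<1/\lambda$, so $E_{\lambda,-1}[u]=\{\abs{u(x)-u(y)}=1\}$ for all $\lambda<1$, and this set has infinite $\nu_{-1}$-measure. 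Neither $0$ nor $+\infty$ equals the finite positive number $\abs{\gamma}^{-1}\kappa(1,N)\|\nabla u\|_\cM$. (Alternatively, for $-1\le\gamma<0$ one may simply invoke part~(c) of \cref{theorem_weighted_reverse}, since $\dot W^{1,1}\subset\dot{BV}$.)

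The main obstacle is the uniform asymptotics $\Phi(z)=\abs{z}\,\psi(z/\abs{z})+O(\abs{z}^2)$: one must control, uniformly in the direction and over the range of scales $\abs{z}\in(\delta,\eta)$, the errors coming from the curvature of $\partial\Omega$ and from the Jacobian of the tubular coordinates, which is where smoothness of $\partial\Omega$ enters. (In fact a uniform $o(\abs{z})$, valid already for Lipschitz $\partial\Omega$, would be enough.) Everything downstream of this is polar-coordinate bookkeeping together with the elementary separation of the near-diagonal and far-field parts of the $z$-integral.
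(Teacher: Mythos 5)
Your argument is correct and reaches the paper's conclusion (the limit equals $\kappa(1,N)\abs{\gamma+1}^{-1}\norm{\nabla u}_{\cM}$ rather than $\kappa(1,N)\abs{\gamma}^{-1}\norm{\nabla u}_{\cM}$), but by a genuinely different route. The paper's proof of \cref{prop:failureoflimitforBV} (\cref{lem:failureoflimitsBV}) first does an exact one-dimensional computation for $u=\bbone_{[0,\infty)}$ via the change of variables $s=x-y$, $t=x+y$, extends it to finite unions of intervals, and then passes to $N\ge 2$ by the method of rotations, dominated convergence, and the Crofton-type identity \eqref{eq:Crofton}; the only regularity input is that a.e.\ line meets $\partial\Omega$ transversally in finitely many points. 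You instead work directly in $\bbR^N$ with the translation functional $\Phi(z)=\int\abs{u(x+z)-u(x)}\dif x$ and its first-order (covariogram/anisotropic-perimeter) asymptotics $\Phi(z)=\abs{z}\,\psi(z/\abs{z})+O(\abs{z}^2)$, then integrate in polar coordinates. Your route concentrates all the geometry into that one uniform asymptotic, which you correctly identify as the crux and only sketch via tubular coordinates; as you note, a uniform $o(\abs{z})$ suffices, and this is a standard fact for $C^{1,1}$ (indeed Lipschitz) boundaries, so the gap is fillable. The paper's route trades this for the exact 1D formula plus an integral-geometric identity, which also conveniently yields the uniform bound \eqref{eq:1d_bdd} needed for dominated convergence. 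Your treatment of the range $-1\le\gamma<0$ in part (i) (showing the limit is $0$ for $-1<\gamma<0$ and $+\infty$ for $\gamma=-1$ with the same $u=\bbone_\Omega$) is a nice self-contained alternative to the paper's appeal to \cref{thm:counterexamples}(i). One small slip: $\abs{\gamma+1}\ne\abs{\gamma}$ is false at $\gamma=-1/2$ (the paper explicitly flags this exception), but since $-1/2$ lies outside the ranges $\gamma>0$ and $\gamma<-1$ treated in part (ii), your conclusion is unaffected.
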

For a more detailed discussion we refer to Section \ref{sec:BV-limit}. See also \cref{sec:Otherliminfs} for a discussion about some related open problems.

Motivated by \cite{Brezis_VanSchaftingen_Yung_2021}, we will also be interested in what happens to the larger quantity obtained by replacing the limits on the left hand side of \eqref{eq:lim_weighted} and \eqref{eq:lim_weighted_neg} by $\sup_{\lambda > 0} $. This will be formulated in terms of the Marcinkiewicz space   $L^{p,\infty}(\R^{2N},\nu_\ga) $ (a.k.a.\ weak type $L^{p}$)  defined by the condition 
\begin{equation}\label{eq:weaktype-p}
\quasinorm{F}_{L^{p, \infty} (\R^{2N},\nu_\ga)}^p \defeq \sup_{\lambda > 0} \lambda^p \nu_\ga \bigl(\{ (x,y) \in \bbR^N \times \bbR^N \st \abs{F (x,y)} > \lambda\}\bigr)< \infty.
\end{equation}
As an  immediate consequence of \cref{theorem_weighted_reverse} we have for $N\ge 1$, $1\le p<\infty$,  $\gamma\neq 0$ and all $u\in C^\infty_c(\R^N)$,
\begin{equation}\label{eq:Marc-lower}
\big[ Q_{ \ga/ p } u\big ]_{L^{p,\infty}(\R^{2N},\nu_\ga)}^p \ge C(N,p,\gamma) \norm{\nabla u}_{L^p(\R^N)}^p
\end{equation}
where $C(N,p,\gamma)$ is a positive constant depending only on $N$, $p$ and $\gamma$. Moreover, the same conclusion holds for all $u \in \dot{W}^{1,p}(\R^N)$ when $p > 1$ with any $\gamma \ne 0$, and when $p = 1$ with any $\gamma \notin [-1,0]$. We  shall show that the conditions in the last statement can in fact be relaxed, see the inequalities \eqref{eq:converse-p} and \eqref{eq:converse-1} below. In addition  we have the  important upper bounds for $Q_{\ga/p}u$, extending the case $\gamma = N$ already dealt with in \cite{Brezis_VanSchaftingen_Yung_2021} for $u \in C^{\infty}_c(\R^N)$. The result in \cite{Brezis_VanSchaftingen_Yung_2021} states that for every $N \geq 1$, there exists a constant $C(N)$ such that 
\begin{equation} \label{eq:new_char_sob_norm}
\big[ Q_{N/p } u\big ]_{L^{p,\infty}(\R^{2N},\nu_N)}^p
\leq C(N) \norm{\nabla u}_{L^p(\R^N)}^p
\end{equation}
for all $u \in C^{\infty}_c(\R^N)$ and all $1 \leq p < \infty$. 
In light of \cref{theorem_weighted_reverse}, it is natural to ask whether one can replace the limits on the left hand sides of \eqref{eq:lim_weighted} and \eqref{eq:lim_weighted_neg} by $\sup_{\lambda > 0}$ and still obtain a quantity that is comparable to $\norm{\nabla u}_{L^p(\R^N)}^p$. As suggested by \cref{theorem_weighted_reverse}   the  answer to our question is sensitive to the values of $\gamma$ and $p$.  
\begin{theorem} \label{theorem_weighted_pgr1}
Suppose that $N\ge 1$, $1<p<\infty$ and $\gamma\in \bbR$.  Then the following hold.
\begin{enumerate} [(i)] 
\item The inequality  
\begin{equation}\label{eq:Marc-p}
\big[ Q_{ \ga/ p } u\big ]_{L^{p,\infty}(\R^{2N},\nu_\ga)} \le C(N,p,\gamma) \norm{\nabla u}_{L^p(\R^N)}
\end{equation}
holds for all $u \in C^\infty_c(\R^N)$ if and only if $\gamma\neq 0$. In this case \eqref{eq:Marc-p} extends to all  $u\in \dot W^{1,p}(\bbR^N)$. \item Suppose that $u\in L^1_\loc(\bbR^N) $ and $Q_{\ga/p} u\in L^{p,\infty} (\R^{2N},\nu_\ga)$.  Then $u\in \dot W^{1,p}(\R^N) $ and we have the inequality
\Be \label{eq:converse-p}
\|\nabla u\|_{L^p(\R^N)} \le C_{N,p,\ga} [Q_{\ga/p} u]_{L^{p,\infty}(\R^{2N},\nu_\ga)} .
\Ee 
\end{enumerate}
\end{theorem}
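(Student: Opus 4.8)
The plan is to treat the two parts by different methods: part~(i) adapts the maximal–function approach of \cite{Brezis_VanSchaftingen_Yung_2021}, while part~(ii) rests on a Lorentz–space duality argument combined with \cref{theorem_weighted_reverse}.

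For the upper bound in~(i) I would use the classical pointwise estimate $|u(x)-u(y)|\le C_N|x-y|\,\bigl(M|\nabla u|(x)+M|\nabla u|(y)\bigr)$, valid a.e.\ for any $u\in \dot W^{1,p}\subset W^{1,1}_\loc$ (here $M$ is the Hardy--Littlewood maximal operator). Writing $y=x+h$ and $g:=M|\nabla u|$, this holds in particular on all of $E_{\lambda,\gamma/p}[u]$, where it forces $\lambda|h|^{\gamma/p}\lesssim g(x)+g(x+h)$. Splitting according to whether $g(x)\ge g(x+h)$ and using the $(x,y)$-symmetry of $\nu_\gamma$, one reduces to the region where $|h|$ is confined to a ball (if $\gamma>0$) or to the complement of a ball (if $\gamma<0$) of radius $\asymp (g(x)/\lambda)^{p/\gamma}$; integrating $|h|^{\gamma-N}$ over that region produces a factor $\asymp(g(x)/\lambda)^{p}$ in both cases, so that $\lambda^p\nu_\gamma(E_{\lambda,\gamma/p}[u])\lesssim \|g\|_{L^p}^p\lesssim \|\nabla u\|_{L^p}^p$, the last step (and the hypothesis $p>1$) being the $L^p$-boundedness of $M$. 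For the negative direction, when $\gamma=0$ one checks by hand that $\nu_0(E_{\lambda,0}[u])=+\infty$ for every fixed small $\lambda$ as soon as $\nabla u\not\equiv 0$: on an open set where $|\nabla u|>2\lambda$ one has $|u(x+h)-u(x)|>\lambda|h|$ for all small $h$ in a fixed cone around $\pm\nabla u(x)$, and $\int_{|h|<\varepsilon,\ h\in\mathrm{cone}}|h|^{-N}\,dh=+\infty$; integrating over a positive-measure set of such $x$ gives $+\infty$. Thus \eqref{eq:Marc-p} already fails for suitable $u\in C^\infty_c$ when $\gamma=0$.

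For part~(ii), once we know $u\in\dot W^{1,p}$ the estimate \eqref{eq:converse-p} is immediate from \cref{theorem_weighted_reverse}(a)--(b) (applicable for every $\gamma\neq 0$ since $p>1$) together with $\lim_{\lambda}\le\sup_{\lambda}$; so the real task is to prove $u\in\dot W^{1,p}$. I would first establish $\nabla u\in L^{p,\infty}(\R^N)$. Fix $\phi\in C^\infty_c(\R^N;\R^N)$ and a mollifier $\omega\ge 0$ with $\int\omega=1$ \emph{supported in the annulus} $\{1/2\le|z|\le1\}$. Starting from $\operatorname{div}\phi(x)=\lim_{\delta\to0}\delta^{-1}\int\bigl[\phi(x-\delta z)-\phi(x)\bigr]\cdot\nabla\omega(z)\,dz$, substituting $w=x-\delta z$, integrating against $u$, and using Fubini together with the cancellation $\int\nabla\omega=0$, one arrives at
\[
\int_{\R^N}u\,\operatorname{div}\phi=\lim_{\delta\to0}\ \frac{1}{\delta^{N+1}}\iint_{\frac{\delta}{2}<|x-w|<\delta}\bigl(u(x)-u(w)\bigr)\,\phi(w)\cdot\nabla\omega\!\Bigl(\tfrac{x-w}{\delta}\Bigr)\,dw\,dx .
\]
On the annulus $|x-w|\asymp\delta$ one has $|u(x)-u(w)|=|Q_{\gamma/p}u(x,w)|\,|x-w|^{1+\gamma/p}\asymp\delta^{1+\gamma/p}|Q_{\gamma/p}u(x,w)|$ and $dw\,dx\asymp\delta^{N-\gamma}\,d\nu_\gamma$, so the right-hand side is bounded by a constant times
\[
\delta^{-\gamma/p'}\iint_{|x-w|\asymp\delta}|Q_{\gamma/p}u(x,w)|\,|\phi(w)|\,d\nu_\gamma\ \le\ \delta^{-\gamma/p'}\,[Q_{\gamma/p}u]_{L^{p,\infty}(\nu_\gamma)}\,\bigl\|\,|\phi(w)|\,\mathbf 1_{|x-w|\asymp\delta}\bigr\|_{L^{p',1}(\nu_\gamma)},
\]
where — and this is the decisive point — one uses H\"older's inequality for the Lorentz pairing $L^{p,\infty}\times L^{p',1}$ rather than the useless $L^{p}\times L^{p'}$ duality. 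Since the $\nu_\gamma$-distribution function of $(x,w)\mapsto|\phi(w)|\mathbf 1_{\delta/2<|x-w|<\delta}$ equals $c_{N,\gamma}\delta^{\gamma}$ times the Lebesgue distribution function of $|\phi|$, a change of variables gives $\bigl\|\,|\phi(w)|\mathbf 1_{|x-w|\asymp\delta}\bigr\|_{L^{p',1}(\nu_\gamma)}=c_{N,\gamma}^{1/p'}\delta^{\gamma/p'}\|\phi\|_{L^{p',1}(\R^N)}$, so all powers of $\delta$ cancel (as the common scaling of the two sides requires). Letting $\delta\to0$ and using the density of $C^\infty_c$ in $L^{p',1}$ and the identification $(L^{p',1})^*=L^{p,\infty}$, we conclude $\nabla u\in L^{p,\infty}(\R^N)$ with $\|\nabla u\|_{L^{p,\infty}}\lesssim_{N,p,\gamma}[Q_{\gamma/p}u]_{L^{p,\infty}(\nu_\gamma)}$.

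This leaves $u\in W^{1,1}_\loc$ with an a.e.\ gradient lying merely in $L^{p,\infty}$, and it remains to upgrade to $L^{p}$. Here I would invoke the Fatou-type lower bound behind \cref{theorem_weighted_reverse} — in the form proved in \cref{lem:liminfs}, which uses only that $\nabla u$ exists almost everywhere — to get $\liminf_{\lambda\to\Lambda}\lambda^p\nu_\gamma(E_{\lambda,\gamma/p}[u])\ge\tfrac{\kappa(p,N)}{|\gamma|}\|\nabla u\|_{L^p}^p$, with $\Lambda=+\infty$ when $\gamma>0$ and $\Lambda=0^+$ when $\gamma<0$. Since this $\liminf$ is at most $\sup_{\lambda>0}\lambda^p\nu_\gamma(E_{\lambda,\gamma/p}[u])=[Q_{\gamma/p}u]_{L^{p,\infty}(\nu_\gamma)}^p<\infty$, we obtain $\nabla u\in L^p$, i.e.\ $u\in\dot W^{1,p}$, and then \cref{theorem_weighted_reverse} yields the sharp form \eqref{eq:converse-p}. (When $\gamma=0$ the hypothesis $\nu_0(E_{\lambda,0}[u])<\infty$ for all $\lambda$ forces $\nabla u=0$ a.e.\ by the diagonal computation used in~(i), so~(ii) holds trivially.) The main obstacle is precisely this weak-to-strong passage: the duality argument is structurally incapable of producing more than $L^{p,\infty}$ control of $\nabla u$, and one genuinely needs the \emph{quantitative} limit/liminf formula of \cref{theorem_weighted_reverse} — not merely the membership $Q_{\gamma/p}u\in L^{p,\infty}(\nu_\gamma)$ — to close the gap. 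The two other points requiring care are the use of Lorentz duality in place of ordinary $L^{p}$ duality and the choice of an \emph{annular} mollifier, which keeps every difference quotient at the single scale $\delta$ so that the cancellation of the powers of $\delta$ is transparent.
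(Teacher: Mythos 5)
Your part~(i) is essentially the paper's own argument: the Lusin--Lipschitz inequality \eqref{eq:LL} plus the $L^p$-boundedness of the Hardy--Littlewood maximal operator is exactly \cref{proposition_Mp_gamma_-1_0_p}, and your cone computation at $\gamma=0$ is a correct (simpler, smooth-case) instance of \cref{thm:gamma=0}, which suffices since only $C^\infty_c$ counterexamples are needed there. Part~(ii), however, genuinely departs from the paper. The paper runs the Lorentz duality $L^{p,\infty}\times L^{p',1}$ against the multi-scale quantity $s\iint_{B_R\times B_R}|u(x)-u(y)|^p|x-y|^{-N-p+sp}\dif x\dif y$ and then invokes the Bourgain--Brezis--Mironescu formula to conclude $u\in\dot W^{1,p}$; you instead pair a single annular scale $|x-w|\asymp\delta$ against a test field via $\int u\,\mathrm{div}\,\phi$, use the same Lorentz duality to get $|\int u\,\mathrm{div}\,\phi|\lesssim [Q_{\gamma/p}u]_{L^{p,\infty}(\nu_\gamma)}\|\phi\|_{L^{p',1}}$ (the $\delta$-powers do cancel as you claim), conclude $\nabla u\in L^{p,\infty}(\R^N)$ by duality, and then upgrade to $L^p$ via the Fatou lower bound of \cref{lem:liminfs}. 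This avoids BBM entirely and yields the sharp constant $(|\gamma|/\kappa(p,N))^{1/p}$ in \eqref{eq:converse-p}; the paper's route is more self-contained in that it never needs the differentiation machinery of \cref{sec:proofoflimits} for the converse direction.

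Two points need repair. First, at the stage where you invoke \cref{lem:liminfs} you only know $\nabla u\in L^{p,\infty}(\R^N)$, not $\nabla u\in L^1(\R^N)$, so \cref{lem:Lebesguevariant} does not apply as stated (its proof approximates $\nabla u$ globally in $L^1$); saying the lemma ``uses only that $\nabla u$ exists almost everywhere'' is not an accurate description of what is needed. The fix is routine but must be made explicit: localize the differentiation lemma to $B_R\times\cA_M$ (only values of $u$ on a fixed ball enter, and $L^{p,\infty}\subset L^1_{\loc}$ for $p>1$), run the Fatou argument with the lower bound $\frac{\kappa(p,N)}{|\gamma|}\int_{B_R}|\nabla u|^p$, and let $R\to\infty$. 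Second, your parenthetical for $\gamma=0$ in part~(ii) begs the question: for a general $u\in L^1_{\loc}$ you may not invoke ``the diagonal computation used in~(i)'', since that computation presupposed a continuous (or at least locally integrable) gradient, which is not given. The correct order is to note that your duality step works verbatim for $\gamma=0$ (the exponent $-\gamma/p'$ vanishes and $\nu_0(\{\delta/2<|x-w|<\delta\}\cap(\R^N\times\{|\phi|>\la\}))$ is still $\delta$-independent), giving $\nabla u\in L^{p,\infty}\subset L^1_{\loc}$, and only then apply \cref{thm:gamma=0} to conclude $\nabla u=0$ a.e. With these adjustments your argument is complete.
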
 

There is a new phenomenon for  $p=1$, namely  the upper bounds for $Q_\ga u$ only hold for the  more restrictive range $\gamma \in (-\infty,-1)\cup (0,\infty)$. Here it is also  natural to replace $\dot W^{1,1}$ with $\dot{BV}$.

\begin{theorem} \label{theorem_weighted_pequal1}
Suppose that $N\ge 1$ and $\gamma\in \bbR$.  Then the following hold.
\begin{enumerate}[(i)]
\item The inequality
  \begin{equation} \label{eq:Marc-1}
  \big[ Q_{\ga } u\big ]_{L^{1,\infty}(\R^{2N},\nu_\ga)} \le  C(N,\gamma) \|\nabla u\|_{L^1(\R^N)}
\end{equation}
holds for all $u\in C^\infty_c(\bbR^N)$ if and only if $ \gamma \not \in [-1, 0]$. In this case \eqref{eq:Marc-1} extends to all $u\in \dot W^{1,1}(\bbR^N)$, and, if $\|\nabla u\|_{L^1(\R^N)}$ is replaced by   $\|\nabla u\|_\cM$, to all $u\in \dot{BV}(\Rset^N)$.
\item Suppose that $u\in L^1_\loc(\bbR^N) $ and $Q_{\ga} u\in L^{1,\infty} (\bbR^{2N},\nu_\ga)$.  Then $u\in \dot {BV}(\R^N) $ and we have the inequality
\Be \label{eq:converse-1}
\|\nabla u\|_{\cM}  \le C_{N,\ga} [Q_{\ga} u]_{L^{1,\infty}(\R^{2N},\nu_\ga)} .
\Ee 
\end{enumerate}
\end{theorem}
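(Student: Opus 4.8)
The proof proposal splits along the usual two directions: the upper bound (part (i)) and the converse/reconstruction (part (ii)).

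For part (i), the plan is to first treat the \emph{necessity} of $\gamma\notin[-1,0]$. The endpoint $\gamma=0$ should follow from the scaling/dilation structure: for $\gamma=0$, $\nu_0$ is dilation invariant in a degenerate way and one checks on a fixed bump $u$ rescaled that $[Q_0 u]_{L^{1,\infty}(\nu_0)}$ cannot be controlled by $\|\nabla u\|_{L^1}$ --- indeed this is the content of \cref{thm:counterexamples-gamma=0} alluded to later. For $\gamma\in(-1,0)$, the plan is to invoke the self-similar (Cantor-type) counterexamples announced in the abstract and in \cref{thm:counterexamples}: one builds $u\in \dot W^{1,1}$ supported near a self-similar set of dimension $\gamma+1$ for which $\la\nu_\gamma(E_{\la,\gamma}[u])\to\infty$, which \emph{a fortiori} kills the $\sup_\la$ bound; the case $\gamma=-1$ is the classical Ponce example. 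For the \emph{sufficiency} when $\gamma\notin[-1,0]$, I would reduce by density to $u\in C^\infty_c$ and establish a weak-type $(1,1)$ bound for the sublinear map $u\mapsto Q_\gamma u$ from $L^1(\nabla)$ to $L^{1,\infty}(\nu_\gamma)$. Split the region $\{|x-y|\le 1\}$ versus $\{|x-y|> 1\}$ (rescaling to normalize). On the near-diagonal part write $u(x)-u(y)=\int_0^1 \nabla u(y+t(x-y))\cdot(x-y)\,dt$, so $|Q_\gamma u(x,y)|\le |x-y|^{-\gamma}\fint_0^1|\nabla u(y+t(x-y))|\,dt$; changing variables $(x,y)\mapsto(y+t(x-y),x-y)$ and integrating the kernel $|x-y|^{\gamma-N}$ against $|x-y|^{-\gamma}$ over $|x-y|\le1$ converges precisely because $\gamma>-1$ (here is where the lower restriction $\gamma>-1$, resp.\ $\gamma<-1$, enters via whether one integrates near or far from the diagonal), giving the strong-type $L^1$ bound on that piece and hence the weak-type bound. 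On the far-diagonal part $|x-y|>1$, for $p=1$ one instead uses $|u(x)-u(y)|\le |u(x)|+|u(y)|$ together with the Gagliardo--Nirenberg--Sobolev embedding $\|u\|_{L^{N/(N-1)}}\lesssim\|\nabla u\|_{L^1}$ (for $N\ge2$; $N=1$ is direct) to get a weak-type estimate with the correct homogeneity when $\gamma<-1$, while for $\gamma>0$ the kernel decays and one argues directly. The extension to $\dot W^{1,1}$ is by the density/Fatou argument of \cref{theorem_weighted_reverse}, and the extension to $\dot{BV}$ is by mollification $u_\varepsilon=u*\rho_\varepsilon$, using $\|\nabla u_\varepsilon\|_{L^1}\le\|\nabla u\|_\cM$ and lower semicontinuity of $[\,\cdot\,]_{L^{1,\infty}(\nu_\gamma)}$ under a.e.\ convergence.

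For part (ii), the plan is the standard ``difference quotients control the gradient'' scheme adapted to the Marcinkiewicz setting, following the mechanism indicated in Remark (i) after \cref{theorem_weighted_reverse} and in the proof of \cref{lem:liminfs}. Given $Q_\gamma u\in L^{1,\infty}(\nu_\gamma)$, I would fix a direction $\theta\in\Sset^{N-1}$ and a scale $\delta>0$, and estimate the $L^1$ norm of the difference $u(\cdot+\delta\theta)-u(\cdot)$ on any ball by a Fubini-type averaging of $|Q_\gamma u(x,y)|$ over a cone of $(x,y)$ pairs with $x-y$ near $\delta\theta$; the weak-type hypothesis gives $\nu_\gamma(E_{\la,\gamma}[u])\le A/\la$, and integrating the layer-cake against the measure of that cone of total $\nu_\gamma$-mass $\sim\delta^{\gamma}\cdot(\text{volume})$ yields $\|u(\cdot+\delta\theta)-u(\cdot)\|_{L^1_{loc}}\lesssim A\,\delta$ uniformly in $\delta$ and $\theta$. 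Uniform boundedness of the difference quotients $\delta^{-1}(u(\cdot+\delta\theta)-u(\cdot))$ in $L^1_{loc}$ as $\delta\to0$ is exactly the characterization of $\dot{BV}$, giving $u\in\dot{BV}$ with $\|\nabla u\|_\cM\lesssim A$; one then optimizes the constant. (When one additionally knows $u\in\dot W^{1,1}$ this upgrades to the $L^1$ statement, but for the stated theorem $\dot{BV}$ is the natural conclusion since weak-type control cannot see the difference.) The main obstacle I anticipate is getting the \emph{sharp dependence on $\gamma$} and handling the borderline behavior cleanly: the near/far dichotomy at $|x-y|\sim1$ must be organized so that exactly one regime is summable for each sign of $\gamma+1$, and in part (ii) one must be careful that the cone of pairs used to average $Q_\gamma u$ has the right $\nu_\gamma$-measure so that the resulting constant is finite precisely when $\gamma\notin[-1,0]$ --- equivalently, one has to verify that the obstruction in part (i) is the \emph{only} obstruction, which is why the self-similar counterexamples of \cref{thm:counterexamples} are essential to the logical completeness of the ``if and only if.''
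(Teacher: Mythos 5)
Your treatment of the necessity direction (the Cantor-type counterexamples for $-1<\gamma<0$, the Ponce example at $\gamma=-1$, and the $\gamma=0$ analysis) and of the extensions to $\dot W^{1,1}$ by density and to $\dot{BV}$ by mollification matches the paper. However, both core estimates in your plan contain genuine gaps.

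First, the sufficiency in part (i). Your near-diagonal step claims a strong-type $L^1$ bound from the pointwise estimate $|Q_\gamma u(x,y)|\le |x-y|^{-\gamma}\int_0^1|\nabla u(y+t(x-y))|\,dt$. But integrating this against $\dif\nu_\gamma=|x-y|^{\gamma-N}\dif x\dif y$ produces the kernel $|x-y|^{-\gamma}\cdot|x-y|^{\gamma-N}=|x-y|^{-N}$, whose integral over $\{0<|x-y|\le 1\}$ diverges logarithmically for \emph{every} $\gamma$; the condition $\gamma>-1$ plays no role in that computation. Indeed $\iint |Q_\gamma u|\,\dif\nu_\gamma=\infty$ for every non-constant $u$, which is exactly why \eqref{eq:Marc-1} is intrinsically a weak-type statement and cannot be obtained from a strong $L^1$ bound on any piece containing the diagonal. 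The paper's proof (\cref{proposition_weighted_upper_L1}) instead exploits the geometry of the superlevel set: after the method of rotations reduces matters to $N=1$, the case $\gamma>0$ is the covering argument of \cite{Brezis_VanSchaftingen_Yung_2021}, and the case $\gamma<-1$ uses a stopping-time construction of intervals $I_i$ with $|I_i|^{-(\gamma+1)}\int_{I_i}f=\tfrac12$, showing that membership in the level set forces $|x-y|\ge\min\{|I_i|,|I_{i-1}|\}$ and only then integrating $|x-y|^{\gamma-1}$ away from the diagonal (convergent precisely because $\gamma<-1$). Your far-field use of Gagliardo--Nirenberg--Sobolev is likewise off target: the bound must depend only on $\|\nabla u\|_{L^1}$, and general $\dot W^{1,1}$ functions need not lie in any $L^q$.

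Second, part (ii). You propose to bound $\|u(\cdot+\delta\theta)-u\|_{L^1_{\loc}}$ by "integrating the layer-cake" of $|Q_\gamma u|$ over a cone of pairs of finite $\nu_\gamma$-measure $M$. But $\int_0^\infty\min(M,A/\lambda)\,\dif\lambda=\infty$: an $L^{1,\infty}$ function need not be locally integrable, so the hypothesis $[Q_\gamma u]_{L^{1,\infty}(\nu_\gamma)}\le A$ does not control the $L^1$ average of $Q_\gamma u$ over any set of positive measure, and the claimed estimate $\|u(\cdot+\delta\theta)-u\|_{L^1_{\loc}}\lesssim A\delta$ does not follow. The paper circumvents exactly this divergence by routing through the BBM formula: one bounds $s\iint_{B_R\times B_R}|u(x)-u(y)|\,|x-y|^{-N-1+s}\dif x\dif y$ via the Lorentz duality \eqref{eq:Lor_dual}, pairing $(Q_{\gamma}u)^{1-\theta}\in L^{1/(1-\theta),\infty}(\nu_\gamma)$ against a bounded factor in $L^{1/\theta,1}(\nu_\gamma)$ (after truncating $u$ to reduce to $u\in L^\infty$); the duality constant $1/\theta\sim 1/s$ is exactly cancelled by the prefactor $s$, and the BBM characterization then yields $u\in\dot{BV}$ with \eqref{eq:converse-1}. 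Some such $\varepsilon$-loss-and-compensation device is needed; as written, the key integral in your scheme is infinite.
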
 

We note that the quantitative bounds \eqref{eq:Marc-p} and \eqref{eq:Marc-1} in \cref{theorem_weighted_pgr1,theorem_weighted_pequal1} are crucial tools for establishing the limiting relations for all $\dot W^{1,p}$ functions in \cref{theorem_weighted_reverse}. Note that there is no restriction on $\gamma$ in  \eqref{eq:converse-p} and \eqref{eq:converse-1}. The constants in the inequalities will be quantified further later in the paper.    In particular,  $C(N,p,\gamma)$ in \eqref{eq:Marc-p} remains bounded as $p\searrow 1$ only in the range  $\gamma \in (0,\infty)\cup(-\infty, -1)$  (cf.\  \cref{theorem_weighted_upper_Lp_allgamma} and \cref{proposition_Mp_gamma_-1_0_1}). 

\subsubsection*{Historical comments.}
Some  special cases of the above quantitative estimates have been known.
Estimate \eqref{eq:Marc-p} for $\gamma=-p$ and $1<p<\infty$ was discovered independently by H.M. Nguyen \cite{Nguyen06}, and by A. Ponce and J. Van Schaftingen (unpublished communication to H. Brezis and H.M. Nguyen), both relying on the Hardy-Littlewood maximal inequality. 
A. Poliakovsky \cite{poliakovsky} recently proved generalizations  of  results  in  \cite{Brezis_VanSchaftingen_Yung_2021} to Sobolev spaces on domains; moreover he obtained  \cref{theorem_weighted_pgr1,theorem_weighted_pequal1} in the special case $\gamma=N$ under the additional assumption that $u \in L^p$. 
 Other far-reaching generalizations to one-parameter families of operators were obtained by O. Dominguez and M. Milman \cite{dominguez-milman}.

\subsubsection*{The case $\gamma=0$}
We  shall now return to the necessity of the assumption $\gamma\notin[-1,0]$ in parts  of Theorems \ref{theorem_weighted_reverse},  \ref{theorem_weighted_pgr1} and \ref{theorem_weighted_pequal1}. When $\gamma=0$, the  bounds for $[Q_{\ga/p}u]_{L^{p,\infty}(\R^{2N},\nu_\ga)}$  fail in a striking way. We   begin by formulating a  result  illustrating  this failure, which also gives a characterization of the semi-norm in the Lipschitz space $\dot W^{1,\infty}$.

\begin{theorem} \label{thm:counterexamples-gamma=0} 
Suppose $N\ge 1$, $u$ is locally integrable on $\bbR^N$ and  $\nabla u\in L^1_\loc(\bbR^N)$.
Then 
\begin{equation} \label{eq:Lipschitz} 
\|\nabla u\|_{L^{\infty}(\R^N)} =\inf \{\la>0: \nu_0(E_{\la, 0}[u]) <\infty \} .
\end{equation}
\end{theorem}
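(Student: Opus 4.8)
The plan is to prove the two inequalities separately, using the scaling-invariant structure of $\nu_0$. Write $L := \|\nabla u\|_{L^\infty(\R^N)}$ (possibly $+\infty$) and $R := \inf\{\lambda > 0 : \nu_0(E_{\lambda,0}[u]) < \infty\}$.

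\emph{Step 1: $R \le L$.} Assume $L < \infty$, so $u$ agrees a.e.\ with an $L$-Lipschitz function; work with that representative. The key observation is that when $\lambda > L$, the set $E_{\lambda,0}[u]$ is empty: since $|u(x) - u(y)| \le L|x-y|$ we have $|Q_0 u(x,y)| = |u(x)-u(y)|/|x-y| \le L < \lambda$. Hence $\nu_0(E_{\lambda,0}[u]) = 0 < \infty$ for all $\lambda > L$, which forces $R \le L$.

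\emph{Step 2: $L \le R$.} This is the substantive direction. Suppose $\lambda > R$, so $\nu_0(E_{\lambda,0}[u]) =: M < \infty$; I want to show $|\nabla u| \le \lambda$ a.e., which gives $L \le \lambda$, and then let $\lambda \searrow R$. Fix a Lebesgue point $x_0$ of $\nabla u$ and a direction $\theta \in \Sset^{N-1}$; I estimate the directional difference quotient along $\theta$. The idea is to integrate over a thin cone or a suitably chosen family of parallel segments emanating near $x_0$ and use that $\nu_0$ assigns \emph{infinite} mass to any such cone-like neighbourhood of the diagonal in a fixed ball, while $E_{\lambda,0}[u]$ has finite $\nu_0$-mass; concretely, if $|Q_0 u| > \lambda$ held on a positive-$\nu_0$-measure (in fact an infinite-$\nu_0$-measure) subset of $\{(x,y) : x,y \in B(x_0, r),\ (x-y)/|x-y| \approx \theta\}$, one would contradict $\nu_0(E_{\lambda,0}[u]) = M < \infty$. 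The cleanest implementation is probably: for a.e.\ line $\ell$ in direction $\theta$, $u|_\ell$ is absolutely continuous with derivative $\partial_\theta u$; on such a line the one-dimensional computation shows that if $|\partial_\theta u| > \lambda$ on a segment of positive length, then $|u(x)-u(y)| > \lambda|x-y|$ for a family of pairs $(x,y)$ on $\ell$ of positive one-dimensional measure, and integrating the one-dimensional bad sets against $|x-y|^{-N}\,dx\,dy$ transverse to $\ell$ produces a divergent integral (because $\int_0^1 t^{-N} \cdot t^{N-1}\,dt$ diverges in the relevant cone geometry), contradicting finiteness of $\nu_0(E_{\lambda,0}[u])$. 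Therefore $|\partial_\theta u| \le \lambda$ a.e.\ for every fixed $\theta$; a Fubini argument over a countable dense set of directions upgrades this to $|\nabla u| \le \lambda$ a.e.

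\emph{Main obstacle.} The delicate point is Step 2: making rigorous that ``finite $\nu_0$-mass of $E_{\lambda,0}[u]$ forces $|\nabla u|\le\lambda$'' despite the fact that $\nu_0$ is only $\sigma$-finite, not finite, near the diagonal. One has to choose the test region carefully so that it has infinite $\nu_0$-measure whenever the difference quotient exceeds $\lambda$ on a nondegenerate set — i.e.\ so that a genuine contradiction is produced rather than merely ``a set of positive measure.'' I expect the right tool is to slice by lines in a fixed direction $\theta$, apply absolute continuity of $u$ on almost every line (which is legitimate since $\nabla u \in L^1_{\mathrm{loc}}$), reduce to the one-dimensional fact that a Lipschitz-type bound for an absolutely continuous function is equivalent to the pointwise bound on its derivative, and track the weight $|x-y|^{\gamma - N} = |x-y|^{-N}$ through the transversal integration to exhibit the divergence. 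The homogeneity of $\nu_0$ under the dilations $(x,y)\mapsto(tx,ty)$ is exactly what makes the ``$\inf$'' on the right-hand side of \eqref{eq:Lipschitz} the natural scale-free quantity matching $\|\nabla u\|_{L^\infty}$.
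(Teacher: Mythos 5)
Your Step 1 is exactly the paper's easy direction, and your Step 2 rests on the same mechanism as the paper's proof of the stronger dichotomy in its Proposition on $\nu_0$: if $|\nabla u|>\lambda$ on a set of positive measure, then a cone-shaped family of pairs lies in $E_{\lambda,0}[u]$ down to arbitrarily small separations, and the radial integral $\int_\delta^{\delta_0}r^{-1}\,dr=\log(\delta_0/\delta)$ forces $\nu_0(E_{\lambda,0}[u])=\infty$. The implementations differ: the paper works directly in $\R^{2N}$, pigeonholing $\nabla u/|\nabla u|$ into a small spherical cap on a positive-measure set $F$, writing the difference quotient as $\langle h/|h|,\int_0^1\nabla u_\circ(x+sh)\,ds\rangle$, and disposing of the error set via $L^1$-continuity of translation plus Chebyshev; you propose slicing into lines, invoking the ACL property of $W^{1,1}_{\loc}$ and Lebesgue differentiation of $\partial_\theta u$ along each line. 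Your route is viable and arguably more elementary, but as written it has one flaw that must be repaired.

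The flaw is the claim that you get $|\partial_\theta u|\le\lambda$ a.e.\ \emph{for every fixed} $\theta$, to be upgraded over a countable dense set of directions. A single direction is a null set in the polar decomposition $\nu_0(E)=\int_{\Sset^{N-1}}\int_{\R^N}\int_0^\infty \bbone_E(x,x+r\omega)\,\tfrac{dr}{r}\,dx\,d\omega$, so finiteness of $\nu_0(E_{\lambda,0}[u])$ says nothing about the slice integral for one specific $\theta$: the divergence you produce on lines in the single direction $\theta$ does not by itself contradict $\nu_0<\infty$. (Your parenthetical ``$\int_0^1 t^{-N}\cdot t^{N-1}\,dt$'' already presupposes integration over a positive-measure set of directions, while your slicing only controls one.) Two fixes are available: (i) conclude only that for a.e.\ $\omega\in\Sset^{N-1}$ one has $|\partial_\omega u|\le\lambda$ a.e.\ — which is what the polar decomposition actually yields — and then use Fubini in $(x,\omega)$ together with continuity of $\omega\mapsto\langle\omega,\nabla u(x)\rangle$ to get $|\nabla u|\le\lambda$ a.e.; or (ii) run the cone version, in which case you must show the difference quotients exceed $\lambda$ uniformly over a whole cap of directions and down to scale zero on a positive-measure set of base points — this is exactly where the paper inserts the margins $\lambda<\lambda_1<\lambda_2<\|\nabla u\|_\infty$, the cap pigeonholing, and the translation-continuity step. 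With either fix, plus an explicit uniformization of the scale $t_0(s)$ below which the one-dimensional difference quotient exceeds $\lambda$ at a Lebesgue point (take $A_n=\{s:t_0(s)>1/n\}$ and pick $n$ with $|A_n|>0$; without this the contribution is $0\cdot\infty$), your plan closes.
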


 Indeed in \cref{thm:gamma=0} we shall prove the stronger statement  that $\nu_0(E_{\la,0}[u]) =0$ for $\la>\|\nabla u\|_\infty$, and $\nu_0(E_{\la,0}[u]) =\infty$ for $\la<\|\nabla u\|_\infty$.
As an  immediate consequence  of \cref{thm:counterexamples-gamma=0}  we get 
\begin{corollary}\label{cor:recognize-constants}
Let $u$ be  locally integrable on $\bbR^N$. If $\nabla u\in L^1_\loc(\bbR^N)$ and if $\nu_0(E_{\la, 0}[u])$ is finite for all   $\la>0$,  then $u$ is almost everywhere equal to a  constant function.
\end{corollary}

In view of other known results \cite{Brezis_2002}, \cite{Brezis_VanSchaftingen_Yung_2021Lorentz} on how to recognize constant functions,  a natural question arises whether  the hypothesis on the local integrability of $\nabla u$ in the corollary could be relaxed; one can ask whether the constancy conclusion  holds for all locally integrable functions satisfying $\nu_0(E_{\la,0}[u])<\infty$ for all $\la>0$. However the following example shows that such an extension  fails (for details see \cref{lem:domainsprecise}). 

\begin{examplebf} 
\label{lem:domains} 
Let $\Omega\subset \bbR^N$ be a bounded Lipschitz domain and let $u$ be the characteristic function of $\Omega$. Then $u\in{BV}(\bbR^N) \setminus \dot W^{1,1}(\bbR^N)$  and  $\sup_{\la>0} \la\,  \nu_0(E_{\la,0}[u])<\infty$.
\end{examplebf}

\subsubsection*{More on counterexamples} We now make more explicit the   exclusion of the parameters $\gamma\in [-1,0)$ in  part (\ref{itemc}) of  \cref{theorem_weighted_reverse}  and in \eqref{eq:Marc-1}.
  We shall show in \cref{sec:Cantor}   that  for $\gamma \in (-1,0)$ these negative results can  be related to  self-similar Cantor subsets of $\bbR$, of  dimension $1+\gamma$. 
\begin{theorem} \label{thm:counterexamples} 
Suppose $N\ge 1$. Then the following hold.
\begin{enumerate}[(i)]
\item  Let  $-1\le \gamma<0$. There exists a $C^\infty$ function $u\in \dot W^{1,1}(\bbR^N)$, rapidly decreasing  as $|x|\to \infty$ and such that
\begin{equation} \label{eq:lim_weighted_neg-infty}
\lim_{\lambda \searrow 0} \lambda \nu_\gamma(E_{\la, \gamma}[u]) = \infty.
\end{equation}
\item Let $-1 \leq \ga < 0$. There exists a compactly supported $u\in W^{1,1}(\R^N)$ for which \eqref{eq:lim_weighted_neg-infty} holds. The set  
\[
\big\{u\in W^{1,1} (\R^N) : \limsup_{\la \searrow 0} \la \nu_\gamma(E_{\la, \gamma}[u]) <\infty \big\} 
\]
is meager in $W^{1,1}(\R^N)$, i.e.\ of first category in the sense of Baire. 
\item Let $-1\le  \gamma<0$, $N\ge 2$ or $-1<\gamma<0$, $N=1$. There exists a compactly supported $u\in W^{1,1}(\bbR^N)$ such  that $\nu_\gamma(E_{\la, \gamma}[u]) = \infty$  for all $\la >0$; moreover, the  set 
\[
\big\{u\in W^{1,1}(\R^N): \nu_\gamma(E_{\la, \gamma}[u]) <\infty \text{ for some   $\la\in (0,\infty)$} \big\}
\]  
is meager in $W^{1,1}(\R^N)$. 
 \end{enumerate}
 \end{theorem}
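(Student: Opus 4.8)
The plan is to organise the proof around the two regimes $\gamma=-1$ (where, as the excerpt already points out, the first assertions of (i) and (ii) are essentially Ponce's example from \cite{Nguyen06}\cite{BN2018} and only the category statements and the higher‑dimensional (iii) are new) and $\gamma\in(-1,0)$ (the genuinely new part, built on self‑similar Cantor sets). In both regimes I would first reduce the inherently one‑dimensional constructions to $N=1$: from a one–variable example $u_0$ put $u(x)=u_0(x_1)\prod_{j\ge2}\chi(x_j)$ with $\chi\in C^\infty_c$, $\chi\equiv1$ near $0$, and estimate $\nu_\gamma(E_{\lambda,\gamma}[u])$ from below by restricting the $(x',y')$–integration to $\lvert x'-y'\rvert\lesssim\lvert x_1-y_1\rvert$, so that $\lvert x-y\rvert\approx\lvert x_1-y_1\rvert$; Fubini then yields a factor $\approx\lvert x_1-y_1\rvert^{N-1}$ and hence $\nu_\gamma(E_{\lambda,\gamma}[u])\gtrsim_N\nu^{(1)}_\gamma(E_{\lambda,\gamma}[u_0])$, while the regularity and support properties pass to $u$ for free. (For part (iii) with $N\ge2$ there is also a purely higher‑dimensional construction, below.)

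For $\gamma\in(-1,0)$ set $d:=1+\gamma\in(0,1)$ and fix a self‑similar Cantor set $K\subset[0,1]$ of dimension $d$, i.e. the attractor of $m$ similarities of ratio $\rho$ with $m\rho^{d}=1$, $m\rho<1$, with natural measure $\mu_K$ ($\mu_K(B(x,r))\asymp r^{d}$) and Cantor–Lebesgue function $\phi_K$, so $\phi_K'=\mu_K$ and $\lvert\phi_K(x)-\phi_K(y)\rvert\asymp\lvert x-y\rvert^{d}$ for $x,y\in K$. The identity $d=1+\gamma$ is the engine: for $x$ near $K$ and $\lvert x-y\rvert=\delta$ one gets $\lvert Q_{\gamma}\phi_K(x,y)\rvert=\lvert\phi_K(x)-\phi_K(y)\rvert\,\delta^{-1-\gamma}\asymp\delta^{d}\,\delta^{-d}\asymp1$, while the $\delta$–neighbourhood of $K$ has measure $\asymp\delta^{1-d}=\delta^{-\gamma}$, so $\lvert x-y\rvert^{\gamma-N}$ integrated over a $\delta$–tube of $\phi_K$ around the diagonal behaves like $\delta^{\gamma-1}\delta^{1-d}\,d\delta=\delta^{-1}\,d\delta$; hence every dyadic scale contributes a comparable, \emph{scale‑independent} amount to $\nu_\gamma(E_{\lambda,\gamma}[\phi_K])$ once $\lambda$ is below a fixed constant. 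Since $\phi_K$ is only $BV$, I would mollify: $\phi_K*\psi_\eta\in C^\infty$ has $\lVert\nabla(\phi_K*\psi_\eta)\rVert_{L^1}\le1$, keeps the above behaviour for $\delta\in(\eta,\operatorname{diam}K)$, and below scale $\eta$ produces only the harmless universal amount $\tfrac1{\lvert\gamma\rvert\lambda}$, so $\nu_\gamma(E_{\lambda,\gamma}[\phi_K*\psi_\eta])\gtrsim\log(1/\eta)$ for small $\lambda$. For (i)–(ii) I would then take $u=\sum_n c_n w_n$ with $w_n=\phi_K*\psi_{\eta_n}-\phi_K*\psi_{\eta_{n-1}}$ (the ``$[\eta_n,\eta_{n-1}]$–scale content'' of $\phi_K$), the $w_n$ placed in disjoint, widely separated intervals — receding to infinity, with $c_n$ decaying faster than any power, to keep $u\in C^\infty\cap\dot W^{1,1}$ rapidly decreasing for (i); compactly accumulated for (ii). Then $\lVert\nabla u\rVert_{L^1}\le2\sum c_n<\infty$, and the estimate above gives $\nu_\gamma(E_{\lambda,\gamma}[u])\gtrsim\sum_{n:c_n>\lambda}L_n$, $L_n=\log(\eta_{n-1}/\eta_n)$, the coarser blocks being a macroscopic drift negligible on a logarithmically long sub‑range of scales once $L_n\gg\log(c_{n-1}/c_n)$, the finer blocks a negligible tail once $\sum_{m>n}c_m\ll c_n$. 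Taking e.g. $c_n=2^{-n}$, $L_n=4^n$ gives $\lambda\,\nu_\gamma(E_{\lambda,\gamma}[u])\gtrsim\lambda\sum_{n:2^{-n}>\lambda}4^n\to\infty$ as $\lambda\searrow0$, which is \eqref{eq:lim_weighted_neg-infty}.

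For part (iii) with $N=1$, $\gamma\in(-1,0)$ I would instead use Cantor sets $K_n$ of dimensions $d_n=1+\gamma-c_n\nearrow1+\gamma$ in disjoint intervals, $u=\sum_n c_n\,\phi_{K_n}*\psi_{\eta_n}$ with $\eta_n=e^{-1/c_n^2}$ and $\sum c_n<\infty$ (so $u\in W^{1,1}_{\mathrm{comp}}$): near $K_n$, $\lvert Q_\gamma(\phi_{K_n}*\psi_{\eta_n})(x,y)\rvert\asymp c_n\delta^{d_n-d}$ on $\eta_n<\delta<\operatorname{diam}K_n$, whose maximum $\asymp c_n\eta_n^{d_n-d}=c_ne^{1/c_n}\to\infty$, so for every fixed $\lambda>0$ all sufficiently large blocks are active, and a computation using $\gamma-d_n=-1+c_n$ shows each active block contributes $\gtrsim\tfrac1\lambda$ to $\nu_\gamma(E_{\lambda,\gamma}[u])$; hence $\nu_\gamma(E_{\lambda,\gamma}[u])=\infty$ for all $\lambda>0$. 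For $N\ge2$ and any $\gamma\in[-1,0)$ (in particular $\gamma=-1$, where the Cantor mechanism degenerates to a point) there is a simpler alternative: $u=\sum_q c_q\lvert x-q\rvert^{-\alpha}\eta(x-q)$ with $\{q\}$ dense in $[0,1]^N$, $0<\alpha<N-1$, $\sum c_q<\infty$; then $u\in W^{1,1}_{\mathrm{comp}}$ with $u(x)<\infty$ a.e.\ but $\limsup_{y\to x}\lvert u(y)\rvert=\infty$ at a.e.\ $x$, so $\limsup_{y\to x}\lvert Q_\gamma u(x,y)\rvert=\infty$ a.e.\ and $E_{\lambda,\gamma}[u]$ meets the diagonal over a full‑measure set of base points; since for $\gamma\le0$ the $\nu_\gamma$–mass of any tube $\{x\in A,\ \lvert x-y\rvert<\delta\}$ with $\lvert A\rvert>0$ equals $\lvert A\rvert\,\omega_{N-1}\int_0^\delta t^{\gamma-1}\,dt=\infty$, this forces $\nu_\gamma(E_{\lambda,\gamma}[u])=\infty$ for all $\lambda$. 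This also explains the exclusion of $N=1$, $\gamma=-1$: a compactly supported $u\in W^{1,1}(\mathbb R)$ is bounded and continuous, so $E_{\lambda,-1}[u]=\{\lvert u(x)-u(y)\rvert>\lambda\}$ is bounded, at positive distance from the diagonal, hence of finite $\nu_{-1}$–measure for every $\lambda>0$; for $\gamma=-1$ the first parts of (i)–(ii) are Ponce's example, or equivalently the second construction above run with smoothed step‑bump profiles (amplitude $c_n$, plateau width $\ell_n$, ramp width $\eta_n\ell_n$), where a single bump contributes $\gtrsim\log(1/\eta_n)$ to $\nu_{-1}(E_{-1,\lambda}[u])$ for $\lambda<c_n$.

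For the two category assertions I would use a soft Baire argument. Write the set in (ii) as $\bigcup_{M,k\in\mathbb N}A_{M,k}$, $A_{M,k}=\{u\in W^{1,1}:\sup_{0<\lambda<1/k}\lambda\,\nu_\gamma(E_{\lambda,\gamma}[u])\le M\}$, and the set in (iii) as $\bigcup_{M,k}\{u:\nu_\gamma(E_{1/k,\gamma}[u])\le M\}$. Each piece is closed: if $u_j\to u$ in $W^{1,1}$ then along a subsequence $Q_\gamma u_j\to Q_\gamma u$ a.e., so $\mathbf{1}[\lvert Q_\gamma u\rvert>\lambda+\varepsilon]\le\liminf_j\mathbf{1}[\lvert Q_\gamma u_j\rvert>\lambda]$ a.e., and Fatou plus the inner regularity $\nu_\gamma(\{\lvert Q_\gamma u\rvert>\lambda\})=\lim_{\varepsilon\downarrow0}\nu_\gamma(\{\lvert Q_\gamma u\rvert>\lambda+\varepsilon\})$ gives $\nu_\gamma(E_{\lambda,\gamma}[u])\le\liminf_j\nu_\gamma(E_{\lambda,\gamma}[u_j])$. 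Each piece has empty interior: any $u$ in it has $\nu_\gamma(E_{\lambda,\gamma}[u])<\infty$ for the relevant $\lambda$, and adding a small rescaled translate $w=\varepsilon_0u_0((\,\cdot\,-\tau_0)/r_0)$ of the appropriate counterexample $u_0$ — which by the scaling relations for $\nu_\gamma$ and $Q_\gamma$ still satisfies $\limsup_\lambda\lambda\nu_\gamma(E_{\lambda,\gamma}[w])=\infty$ (resp.\ $\nu_\gamma(E_{\lambda,\gamma}[w])=\infty$ for all $\lambda$) while $\lVert w\rVert_{W^{1,1}}<\delta$ — yields $\tilde u=u+w$ with $\lvert Q_\gamma\tilde u\rvert\ge\lvert Q_\gamma w\rvert-\lvert Q_\gamma u\rvert$, so $\nu_\gamma(E_{2\lambda,\gamma}[\tilde u])\ge\nu_\gamma(E_{3\lambda,\gamma}[w])-\nu_\gamma(E_{\lambda,\gamma}[u])$ and hence $\limsup_\lambda\lambda\nu_\gamma(E_{\lambda,\gamma}[\tilde u])=\infty$ (resp.\ $\nu_\gamma(E_{1/k,\gamma}[\tilde u])=\infty$); thus each piece is nowhere dense and the union meager. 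The hard part will be the multiscale estimate of the second and third paragraphs: choosing $c_n,\eta_n$ (resp.\ $d_n$) so that $\sum c_n<\infty$ forces $u\in W^{1,1}$ while the scale‑by‑scale contributions still diverge requires careful bookkeeping of the self‑contribution of each block, the coarse drift, and the fine tail, and rests essentially on the exact relation $\dim K=1+\gamma$ (equivalently $m\rho^{1+\gamma}=1$), which is precisely what makes each scale contribute a scale‑independent, rather than geometrically decaying, amount to $\nu_\gamma(E_{\lambda,\gamma}[\phi_K])$.
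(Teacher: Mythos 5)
Your overall architecture (one‑dimensional Cantor/step constructions at the critical dimension $1+\gamma$, lifted to $\R^N$ by tensoring and a restriction to $|x'-y'|\lesssim|x_1-y_1|$, followed by a Fatou‑plus‑perturbation Baire argument) matches the paper's, and your variant for part (iii) at $N=1$, $\gamma\in(-1,0)$ — Cantor sets of dimensions $d_n\nearrow 1+\gamma$ with mollification scales $\eta_n=e^{-1/c_n^2}$, each active block contributing $\gtrsim 1/\la$ — is a genuinely different and, as far as the sketch goes, sound alternative to the paper's mechanism (which keeps the dimension fixed at $1+\gamma$ and instead shrinks the spatial scales $R_n=2^{-2n}$ while letting the number of generations $m(n)\ge 2^{2^n}$ explode, so that the activation threshold $\la R_n^{N+\gamma}n^2$ drops below $1$ for every fixed $\la$).

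There is, however, a genuine gap in your treatment of part (iii) for $N\ge 2$, and it is fatal for $\gamma=-1$, $N\ge2$, where you offer no fallback. For the dense‑singularity function $u=\sum_q c_q|x-q|^{-\alpha}\eta(x-q)$ you correctly get that $\limsup_{y\to x}|Q_\gamma u(x,y)|=\infty$ for a.e.\ $x$, i.e.\ $E_{\la,\gamma}[u]$ accumulates at the diagonal over a full‑measure set of base points. But the inference to $\nu_\gamma(E_{\la,\gamma}[u])=\infty$ via the infinite $\nu_\gamma$‑mass of a tube $\{x\in A,\ |x-y|<\delta\}$ is a non sequitur: $E_{\la,\gamma}[u]$ is not a tube. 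Over each base point $x$ the fibre $\{y:(x,y)\in E_{\la,\gamma}[u]\}$ essentially consists of the sublevel set $\{u>u(x)+\la\}$, which near $x$ is a union of tiny balls $B(q,r_q)$ with $q\to x$; a set of $y$'s accumulating at $x$ can have arbitrarily small, in particular finite, mass for the density $|x-y|^{\gamma-N}\,\mathrm{d}y$ (e.g.\ $\sum_j \rho_j^N|x-z_j|^{\gamma-N}<\infty$ for suitably small $\rho_j$). Making this work would require showing that $\sum_q r_q^N(|x-q|+r_q)^{\gamma-N}=\infty$ for a positive measure set of $x$, which is a nontrivial divergence/covering statement and not a consequence of density of $\{q\}$ alone. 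The paper avoids this entirely: for $\gamma=-1$, $N\ge2$ it sums rescaled smoothed indicators $n^{-2}R_n^{-(N-1)}v_{m(n)}(x/R_n)$ with $R_n=2^{-2n}$, using that $\la R_n^{N-1}n^2\to0$ (here $N\ge2$ is essential) so that every block is eventually active and contributes $\gtrsim m(n)R_n^{N-1}\to\infty$; for $\gamma\in(-1,0)$, $N\ge2$ it does the same with the tensored Cantor blocks. Since your category argument for (iii) also feeds on the existence of such a $u$, the gap propagates to the meagerness claim for these parameters.

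Two smaller points. First, the two‑sided bound $|\phi_K(x)-\phi_K(y)|\asymp|x-y|^{1+\gamma}$ on $K\times K$ is false as stated (it fails for the endpoints of a complementary gap, where the left side vanishes); what is true, and what your argument actually needs, is that at each scale of the self‑similar structure a fixed positive proportion of pairs satisfies the lower bound — this is exactly what the paper's recursive inequality $A_{m+1,\la}\ge A_{m,s\la}+\nu_\gamma([0,\rho]\times[1-\rho,1]\cap\{\dots\})$ in \cref{lem:wk} encodes, so the mechanism survives but the "$\asymp$" should not be used as a pointwise fact. Second, your closedness argument via a.e.\ convergence of $Q_\gamma u_j$ along a subsequence, Fatou for $\nu_\gamma$, and continuity from below is correct and in fact somewhat cleaner than the paper's route through the truncated weak‑type bound of \cref{lem:elementary-LL} on the regions $\Omega_\ell$; the nowhere‑density step by adding a small (rescaled) copy of the counterexample is the same as the paper's.
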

 
 The case $N=1=-\gamma$ plays a special role and is  excluded in the strongest statement  (iii)  since  for  all compactly supported $u\in \dot W^{1,1}(\bbR)$ one has $\nu_{-1}(E_{\la, -1}[u])<\infty$ for all $\la>0$ (\cf. \cref{lem:uniform-cont} below). The proofs of existence of  counterexamples are constructive and the Baire category statements will be obtained as rather straightforward  consequences of the constructions. 
 
\subsection*{Outline of the paper.} 
In \cref{sec:Sobolevbounds} we provide the upper bounds for $[Q_{\gamma/p} u]_{L^{p,\infty}(\R^{2N},\nu_\ga)}$, i.e.\ the proof of inequalities \eqref{eq:Marc-p} and \eqref{eq:Marc-1} in \cref{theorem_weighted_pgr1,theorem_weighted_pequal1}. We first derive these for a dense subclass, relying on covering lemmas, and then extend in \cref{sec:extending-to-Sobolev,sec:BV-extension} to general $\dot{W}^{1,p}$ and $\dot {BV}$-functions.
In \cref{sec:proofoflimits} we derive the limit formulas of \cref{theorem_weighted_reverse}; specifically in \cref{sec:proofofliminfs} we prove the sharp lower bounds involving a $\liminf \la^p\nu_\ga(E_{\la,\ga/p}[u])$ for general functions in $\dot W^{1,p}$ and in \cref{sec:proofoflimsups} we obtain the sharp upper bounds for $\limsup \la^p\nu_\ga(E_{\la,\ga/p}[u])$, under the assumption that $u\in C^1$ is compactly supported. Then in \cref{sec:proofoflimitsW11} we extend these limits to general $\dot W^{1,p}$ functions. In \cref{sec:BV-limit}   we show that  the limit formulas  for $\dot W^{1,1}$ do not extend to general $\dot{BV}$ functions and prove \cref{prop:failureoflimitforBV}. In \cref{sec:backtoSobolev} we prove the reverse inequalities  \eqref{eq:converse-p} and \eqref{eq:converse-1} in \cref{theorem_weighted_pgr1,theorem_weighted_pequal1}. In \cref{sec:gamma=0} we prove \cref{thm:counterexamples-gamma=0} on a characterization of the Lipschitz norm and also discuss \cref{lem:domains}. In \cref{sec:lower-bounds} we provide various constructions of counterexamples and in particular prove  \cref{thm:counterexamples}. We discuss some further perspectives and open problems in \cref{sec:perspectives}.

\subsubsection*{\it Acknowledgements} A.S. and P.-L.Y.  would like to thank the  Hausdorff Research Institute of Mathematics and the organizers of the trimester program ``Harmonic Analysis and Analytic Number Theory'' for a pleasant working environment in the summer of 2021. The research was supported in part by NSF grants DMS-1764295, DMS-2054220 (A.S.) and  by a Future Fellowship FT200100399 from the Australian Research Council (P.-L.Y.).

\section{Bounding $[Q_{\ga/p} u]_{L^{p,\infty}(\R^{2N},\nu_{\ga})}$ by the Sobolev norm.} \label{sec:Sobolevbounds} 

In this section we prove  inequalities \eqref{eq:Marc-p} and \eqref{eq:Marc-1} in Theorems \ref{theorem_weighted_pgr1} and \ref{theorem_weighted_pequal1}.

\subsection{The bound \eqref{eq:Marc-p}  via the  Hardy-Littlewood maximal operator}

Following   \cite{Brezis_VanSchaftingen_Yung_2021}, one can prove the result of \cref{theorem_weighted_pgr1} for $p>1$ by an elementary argument involving the Hardy-Littlewood maximal function  $M\lvert \nabla u\rvert $ of $\lvert \nabla u\rvert$; however the behavior of the constants as $p\searrow 1$  will only be sharp in the range $-1\le \gamma<0$. 

\begin{proposition} \label{proposition_Mp_gamma_-1_0_p}
Let  $N \geq 1$ and  $1 < p < \infty$. There exists a constant $C_N$ such that for all $\gamma\neq 0$ and all $u \in \dot W^{1,p}(\R^N)$, 
\begin{equation} \label{eq:Mp_maximal_bdd}
\sup_{\lambda > 0} \lambda^p \nu_\ga(E_{\la,\ga/p}[u]) \leq \frac{C_N}{|\gamma|} \Big(\frac{ p}{p-1}\Big)^p \norm{\nabla u}_{L^p(\R^N)}^p.
\end{equation}
\end{proposition}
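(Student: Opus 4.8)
The plan is to run the elementary Hardy--Littlewood maximal function argument indicated in \cite{Brezis_VanSchaftingen_Yung_2021}. Its input is the classical pointwise bound: writing $f\defeq\abs{\nabla u}$ and letting $Mf$ be the (uncentered) Hardy--Littlewood maximal function of $f$, one has, for the precise representative of any $u\in\dot W^{1,p}(\R^N)$ and a.e.\ $(x,y)$,
\[
\abs{u(x)-u(y)}\le C_N\,\abs{x-y}\,\bigl(Mf(x)+Mf(y)\bigr),
\]
which follows from the Poincar\'e--type estimate controlling $\abs{u(x)-u_{B(x,\abs{x-y})}}$ by a dimensional multiple of $\abs{x-y}\,Mf(x)$, together with a comparison of the averages over the balls centered at $x$ and at $y$. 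Since $p>1$, the maximal theorem gives $Mf\in L^p(\R^N)$, so this estimate suffices on its own and no density reduction to $C^1_c$ is needed.

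First I would feed this bound into the definition \eqref{Elg_def}: on $E_{\la,\ga/p}[u]$ we have $\la\abs{x-y}^{1+\ga/p}<\abs{u(x)-u(y)}\le C_N\abs{x-y}(Mf(x)+Mf(y))$, so that $Mf(x)$ or $Mf(y)$ must exceed $(2C_N)^{-1}\la\abs{x-y}^{\ga/p}$. A union bound together with the symmetry of $\nu_\ga$ under $(x,y)\mapsto(y,x)$ reduces the task, with $c\defeq(2C_N)^{-1}$, to bounding $\nu_\ga\bigl(\set{(x,y)\st Mf(x)>c\la\abs{x-y}^{\ga/p}}\bigr)$, which by Fubini equals $\int_{\R^N}I(x)\dif x$ where $I(x)=\int_{\set{y\st c\la\abs{x-y}^{\ga/p}<Mf(x)}}\abs{x-y}^{\ga-N}\dif y$. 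In polar coordinates about $x$ the constraint cuts out a ball of radius $(Mf(x)/c\la)^{p/\ga}$ when $\ga>0$, and the complement of a ball of radius $(c\la/Mf(x))^{p/\abs{\ga}}$ when $\ga<0$; since $\int_0^R r^{\ga-1}\dif r=R^\ga/\ga$ in the first case and $\int_R^\infty r^{\ga-1}\dif r=R^\ga/\abs{\ga}$ in the second, both give $I(x)=\tfrac{\sigma_{N-1}}{\abs{\ga}}(c\la)^{-p}Mf(x)^p$. This is the only step that uses $\ga\neq0$ — the radial integral diverges precisely when $\ga=0$ — and it is what produces the factor $\abs{\ga}^{-1}$ in \eqref{eq:Mp_maximal_bdd}.

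Integrating in $x$ and recombining the two symmetric halves gives $\la^p\nu_\ga(E_{\la,\ga/p}[u])\le\tfrac{2\sigma_{N-1}}{\abs{\ga}}(2C_N)^p\norm{Mf}_{L^p(\R^N)}^p$, uniformly in $\la>0$. It remains to invoke the maximal theorem in a form that records the behavior as $p\searrow1$: starting from the truncated weak-$(1,1)$ inequality $\abs{\set{Mf>\alpha}}\le A_N(1-\theta)^{-1}\alpha^{-1}\norm{f\bbone_{\set{f>\theta\alpha}}}_{L^1}$ (split $f$ at height $\theta\alpha$ and use $\norm{M(f\bbone_{\set{f\le\theta\alpha}})}_{L^\infty}\le\theta\alpha$), integrating against $p\alpha^{p-1}\dif\alpha$ and optimizing over $\theta$ at $\theta=(p-1)/p$ yields $\norm{Mf}_{L^p(\R^N)}^p\le C_N\bigl(\tfrac{p}{p-1}\bigr)^p\norm{f}_{L^p(\R^N)}^p$; inserting this and taking $\sup_{\la>0}$ establishes \eqref{eq:Mp_maximal_bdd}. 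The step I would be most careful about is exactly this constant bookkeeping: the pointwise estimate contributes only a dimensional factor, whereas the maximal inequality carries the $p\searrow1$ singularity, and one must tune the truncation level so that this singularity is no worse than $\bigl(\tfrac{p}{p-1}\bigr)^p$ — which is the natural output of the maximal theorem, as $\bigl(\tfrac{p}{p-1}\bigr)^p$ is comparable to $\tfrac{p}{p-1}$ up to the absolute factor $e$. (The optimality of this rate when $\ga\in[-1,0)$, and its failure of optimality for other $\ga$, is a separate matter treated later in the paper.)
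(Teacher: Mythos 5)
Your argument is correct and is essentially the paper's own proof: the Lusin--Lipschitz bound via the Hardy--Littlewood maximal function, the union bound over which of $Mf(x)$, $Mf(y)$ is large, the radial integral that produces the factor $|\gamma|^{-1}$ (and diverges exactly at $\gamma=0$), and the $L^p$ maximal theorem carrying the $(p/(p-1))^p$ singularity. The one variation is that you apply the pointwise inequality in its a.e.\ form directly to an arbitrary $u\in\dot W^{1,p}(\R^N)$ --- which is legitimate, since the exceptional set is $\mathcal{L}^{2N}$-null and $\nu_\gamma$ is absolutely continuous with respect to $\mathcal{L}^{2N}$ away from the diagonal --- whereas the paper first proves the bound for $C^1$ functions with compactly supported gradient and then extends by the limiting argument of \cref{sec:extending-to-Sobolev}; your route is a small but genuine streamlining.
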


\begin{proof} 
We assume first that  $u\in C^1$ and that $\nabla u$ is compactly supported.
As in \cite[Remark 2.3]{Brezis_VanSchaftingen_Yung_2021},  one 
uses the Lusin-Lipschitz inequality
\Be\label{eq:LL}\frac{|u(x)-u(y)|}{|x-y|} \le C [M(\abs{\nabla u})(x)+M(\abs{\nabla u})(y) ]\Ee
and observes that \eqref{eq:LL} implies
\begin{align*}
E_{\lambda,\gamma/p}[u] \subseteq \{|x-y|^{\gamma/p} < 2C \lambda^{-1} M(|\nabla u|)(x)\} \cup \{|x-y|^{\gamma/p} < 2C \lambda^{-1} M(|\nabla u|)(y)\}.
\end{align*} 
As a consequence 
\[\nu_\ga(E_{\la,\ga/p}[u]) \le 2
\int_x \int_{|h|^{\gamma} < 2C [\la^{-1} M(|\nabla u|)(x)]^p} |h|^{\ga-N} \dif h \dif x.
\]
Direct computation of the inner integral  (distinguishing the cases $\gamma>0$ and $\gamma<0$) yields
\[ 
  \nu_\ga(E_{\la,\ga/p}[u]) \lc_N C^p |\ga|^{-1}\la^{-p} \int_{\Rset^N} [M(|\nabla u|)(x)]^p \dif x.
\]
Inequality  \eqref{eq:Mp_maximal_bdd} follows 
then   from the standard maximal inequality 
$\|Mf\|_p^p\le [C(N)p']^p\|f\|_p^p$ for $p>1$, see \cite{stein-diff} (here $p' = p/(p-1)$). The extension to general $\dot W^{1,p}$ functions will be taken up in 
\cref{sec:extending-to-Sobolev}.
\end{proof}

\subsection{The case $\gamma\in \bbR\setminus [-1,0]$} 

We shall prove the following more precise versions of the 
estimates \eqref{eq:Marc-p} and \eqref{eq:Marc-1} when  $\gamma\notin [-1,0]$, with constants that stay bounded as $p\searrow 1$, indeed we  cover all  $p\in [1,\infty)$.
We denote by $\sigma_{N-1}$  the surface area of the sphere $\bbS^{N-1}$. In the proof of the following theorem we will first establish the estimates for functions $u\in C^1(\bbR^N)$ whose gradient is compactly supported. 
The extension to  $\dot W^{1,p}$ and $\dot{BV}$  will be taken up in \cref{sec:extending-to-Sobolev} and \cref{sec:BV-extension}.

\begin{theorem} \label{theorem_weighted_upper_Lp_allgamma}
There exists an absolute constant $C > 0$ such that for every $N \geq 1$, every $1 \leq p < \infty$, and every  $u\in  \dot W^{1,p} (\bbR^N)$ 
\begin{enumerate}[(a)]
\item if $\gamma > 0$, then
\begin{equation}
\sup_{\lambda > 0} \lambda^p  \nu_\ga(E_{\lambda,\gamma/p}[u]) 
 \leq C \sigma_{N-1} \frac{5^{\gamma}}{\gamma} \norm{\nabla u}_{L^p(\R^N)}^p;
\end{equation}
\item if $\gamma < -1$, then
\begin{equation}
\sup_{\lambda > 0} \lambda^p  \nu_\ga(E_{\lambda,\gamma/p}[u]) 
\leq \frac{C \sigma_{N-1}}{|\gamma|} \Big( 1 + \frac{1}{|\gamma + 1|} \Big) \norm{\nabla u}_{L^p(\R^N)}^p.
\end{equation}
\end{enumerate}
When $p = 1$ the above assertions hold for all
$u \in \dot{BV}(\R^N)$ provided that  $\|\nabla u\|_{L^1(\R^N)}$ is replaced by $\|\nabla u\|_{\mathcal{M}}$. 
\end{theorem}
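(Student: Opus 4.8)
The plan is to prove \cref{theorem_weighted_upper_Lp_allgamma} by first establishing the estimates for the dense subclass of functions $u \in C^1(\R^N)$ with compactly supported gradient, and then invoking the extension results of \cref{sec:extending-to-Sobolev} and \cref{sec:BV-extension}. For the core estimate, I would fix $\lambda > 0$ and seek to cover the superlevel set $E_{\lambda,\gamma/p}[u]$ by a controlled family of sets whose $\nu_\gamma$-measure is bounded by $\norm{\nabla u}_{L^p}^p$. The natural device is the one-dimensional fundamental theorem of calculus along the segment joining $x$ and $y$: writing $y = x + r\theta$ with $r = |x-y|$ and $\theta \in \S^{N-1}$,
\[
u(x+r\theta) - u(x) = \int_0^r \theta \cdot \nabla u(x + t\theta) \dif t,
\]
so $|Q_{\gamma/p}u(x,x+r\theta)| \le r^{-\gamma/p} \fint_0^r |\nabla u(x+t\theta)| \dif t$. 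Passing to polar coordinates, $\nu_\gamma(E_{\lambda,\gamma/p}[u]) = \int_{\S^{N-1}} \int_{\R^N} \int_0^\infty \bbone_{E_{\lambda,\gamma/p}[u]}(x, x+r\theta)\, r^{\gamma - 1} \dif r \dif x \dif \omega(\theta)$, and the task reduces, for each fixed direction $\theta$, to a one-dimensional problem: controlling $\int_{\R} \int_0^\infty \bbone[\,|g(s+r) - g(s)| > \lambda r^{1+\gamma/p}\,]\, r^{\gamma-1} \dif r \dif s$ by $\lambda^{-p} \int_\R |g'(s)|^p \dif s$, where $g(s) = u(x' + s\theta)$ ranges over lines parallel to $\theta$ and $x'$ ranges over the orthogonal hyperplane.

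The heart of the matter is therefore the following one-dimensional lemma: for $g \in C^1(\R)$ with $g'$ compactly supported,
\[
\sup_{\lambda>0} \lambda^p \int_{\R} \int_0^\infty \bbone\Big[\,\frac{|g(s+r)-g(s)|}{r^{1+\gamma/p}} > \lambda\,\Big]\, r^{\gamma-1} \dif r \dif s \le C\,\Big(\tfrac{5^\gamma}{\gamma} \text{ or } \tfrac{1}{|\gamma|}\big(1 + \tfrac{1}{|\gamma+1|}\big)\Big) \int_\R |g'(s)|^p \dif s,
\]
according as $\gamma > 0$ or $\gamma < -1$. To prove this I would fix $\lambda$ and consider the set $A_\lambda = \{(s,r) : r > 0,\ |g(s+r)-g(s)| > \lambda r^{1+\gamma/p}\}$. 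The mechanism in \cite{Brezis_VanSchaftingen_Yung_2021} is a Vitali-type covering argument applied to the intervals $[s, s+r]$: from the pointwise bound $|g(s+r)-g(s)| \le \int_s^{s+r}|g'|$, any $(s,r) \in A_\lambda$ gives an interval $I = [s,s+r]$ of length $r$ with $\int_I |g'| > \lambda r^{1+\gamma/p}$, i.e. $\fint_I |g'| > \lambda |I|^{\gamma/p}$. One then selects, via the Vitali covering lemma, a countable disjoint (or boundedly overlapping) subfamily $\{I_k\}$ of such intervals that essentially captures the measure of $A_\lambda$ in the $r^{\gamma-1}\dif r\dif s$ sense, with the key fact that different portions of $A_\lambda$ at comparable scales $r \sim 2^j$ are covered by intervals $I_k$ of length $\sim 2^j$, and the overlap/scale bookkeeping introduces the geometric factors $5^\gamma$ (from dilating covering intervals by a fixed factor, raised to the power $\gamma$ when $\gamma > 0$) or the $\tfrac{1}{|\gamma+1|}$ singularity (from summing $\sum_j (2^j)^{\gamma+1}$ over the dyadic scales below a threshold, which converges precisely when $\gamma < -1$). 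Finally, for each selected interval, $\lambda^p |I_k|^{\gamma} \le \big(\fint_{I_k}|g'|\big)^p |I_k|^{-\gamma(p-1)/... }$ — more precisely one uses Hölder, $\big(\fint_{I_k}|g'|\big)^p \le \fint_{I_k}|g'|^p$, so $\lambda^p |I_k|^{1+\gamma}$-type contributions are bounded by $\int_{I_k}|g'|^p$, and disjointness of the $I_k$ gives the sum $\sum_k \int_{I_k}|g'|^p \le \int_\R |g'|^p$.

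The main obstacle — and the reason the estimate fails for $\gamma \in [-1,0]$ — is the scale summation in the covering argument. When $\gamma > 0$, the singular behavior of $r^{\gamma-1}$ is at $r=0$ where the weight is integrable in a favorable way once one controls the measure of the slice $\{r : (s,r) \in A_\lambda\}$ from above; when $\gamma < -1$, the dangerous regime $r \to \infty$ is tamed because $\sum_{2^j \gtrsim r_0} (2^j)^{\gamma+1} \lesssim r_0^{\gamma+1}/|\gamma+1|$ converges, but for $\gamma \in [-1,0]$ neither tail is summable and no bound of the asserted form can hold — this is exactly what the counterexamples of \cref{thm:counterexamples} exhibit. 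I would be careful to track the dependence of constants on $N$ through the $\int_{\S^{N-1}}$ integration (producing the $\sigma_{N-1}$ factor) and to confirm that the covering constants are absolute (hence the absolute constant $C$ in the statement). For the passage from $C^1$ functions with compactly supported gradient to general $\dot W^{1,p}$ and $\dot{BV}$ functions, I would simply cite \cref{sec:extending-to-Sobolev} and \cref{sec:BV-extension} as indicated; the $\dot{BV}$ case at $p=1$ works because $\fint_I |g'|$ is replaced by $|Dg|(I)/|I|$ and the measure version of the fundamental theorem of calculus still holds for the (good) one-dimensional slices of a $BV$ function.
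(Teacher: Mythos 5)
Your proposal follows the paper's architecture exactly: reduce to $N=1$ by the method of rotations (producing the $\sigma_{N-1}$ factor), use the fundamental theorem of calculus along segments plus H\"older to replace the difference quotient condition by $\int_I f > |I|^{\gamma+1}$ with $f=\lambda^{-p}|\nabla u|^p$, run a one-dimensional interval-selection argument, and extend to $\dot W^{1,p}$ and $\dot{BV}$ by the density arguments of \cref{sec:extending-to-Sobolev} and \cref{sec:BV-extension}. For $\gamma>0$ the paper simply cites \cite[Proposition 2.1]{Brezis_VanSchaftingen_Yung_2021}, whose proof is the Vitali covering argument you describe (whence $5^\gamma$), so that part matches. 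The one place where your sketch is underspecified, and where the paper does something slightly different, is the case $\gamma<-1$: there the family of ``bad'' intervals $\{I:\int_I f>|I|^{\gamma+1}\}$ is upward-closed in length (every sufficiently long interval meeting $\supp f$ is bad, since $|I|^{\gamma+1}\to 0$), so a Vitali selection of bad intervals with large average does not directly capture the superlevel set; what one needs is a \emph{lower} bound on the separation $|x-y|$ of bad pairs. The paper achieves this with a stopping-time partition of $\supp f$ into consecutive intervals $I_1,\dots,I_K$ normalized by $|I_i|^{-(\gamma+1)}\int_{I_i}f=\tfrac12$, showing that any bad pair straddling a partition point $a_i$ must satisfy $|x-y|\ge\min\{|I_i|,|I_{i-1}|\}$, and then computing $\iint_{|x-y|\ge \min\{|I_i|,|I_{i-1}|\}}|x-y|^{\gamma-1}$ explicitly, which is where $\tfrac1{|\gamma|}(1+\tfrac1{|\gamma+1|})$ comes from. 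Your diagnosis of the scale summation $\sum_{2^j\gtrsim r_0}(2^j)^{\gamma+1}\lesssim r_0^{\gamma+1}/|\gamma+1|$ and of why $\gamma\in[-1,0]$ fails is correct; you would just need to replace ``Vitali'' by this minimal-bad-interval (stopping-time) selection to make the $\gamma<-1$ step rigorous.
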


The proof of \cref{theorem_weighted_upper_Lp_allgamma} relies on the following proposition, in which  $[x,y] \subset \Rset^N$ denotes the closed line segment connecting two points $x,y\in \bbR^N$.

\begin{proposition} \label{proposition_weighted_upper_L1}
Let 
\begin{equation}
E(f,\gamma) \defeq \Big\{(x,y) \in \R^N \times \R^N \colon x \ne y, \, \int_{[x,y]} |f| \dif s > |x-y|^{\gamma+1} \Big\}
\end{equation}
for $f \in C_c(\R^N)$.
There exists an absolute constant $C > 0$ such that for every $N \geq 1$ and every $f \in C_c(\R^N)$,
\begin{enumerate}[(i)]
\item if $\gamma > 0$, then 
\begin{equation} \label{eq:upper_bdd_1}
\iint_{E(f,\gamma)} |x-y|^{\gamma-N} \dif x  \dif y \leq C \sigma_{N-1} \frac{5^{\gamma}}{\gamma} \|f\|_{L^1(\R^N)}; 
\end{equation}
\item if $\gamma < -1$, then 
\begin{equation} \label{eq:upper_bdd_2}
\iint_{E(f,\gamma)} |x-y|^{\gamma-N} \dif x  \dif y \leq \frac{C \sigma_{N-1}}{|\gamma|}  \Big( 1 + \frac{1}{|\gamma + 1|} \Big) \|f\|_{L^1(\R^N)}.
\end{equation}
\end{enumerate}

\end{proposition}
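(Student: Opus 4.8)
The plan is to estimate the "bad set" $E(f,\gamma)$ by slicing into directions. Writing $y = x + r\theta$ with $r = |x-y| > 0$ and $\theta \in \mathbb{S}^{N-1}$, we have $\dif y = r^{N-1}\dif r\, \dif \theta$ and the weight $|x-y|^{\gamma-N}\dif x\,\dif y$ becomes $r^{\gamma-1}\dif r\,\dif\theta\,\dif x$. The condition defining $E(f,\gamma)$ reads
\[
\int_0^1 |f(x + tr\theta)|\, r\, \dif t > r^{\gamma+1}, \quad\text{i.e.}\quad \frac{1}{r}\int_0^r |f(x+s\theta)|\,\dif s > r^{\gamma}.
\]
So, after integrating out $x \in \mathbb{R}^N$ and $\theta \in \mathbb{S}^{N-1}$ (using Fubini and the translation/rotation invariance of Lebesgue measure), matters reduce to a one-dimensional estimate: for a fixed line, with $g(s) = |f(x+s\theta)|$ a nonnegative $L^1(\mathbb{R})$ function of $s$, one must control
\[
\int_0^\infty r^{\gamma-1}\, \mathbf{1}\Big[\tfrac{1}{r}\int_0^r g(s)\,\dif s > r^{\gamma}\Big]\,\dif r
\]
by a constant (depending on $\gamma$) times $\|g\|_{L^1(\mathbb{R})}$, or rather the full double integral over $x$ and $\theta$ by a constant times $\|f\|_{L^1(\mathbb{R}^N)}$. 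Here care is needed because the segment $[x,y]$ only uses $s \in [0,r]$, so $g$ should really be $|f|$ restricted to the relevant ray; I will handle this by bounding $\int_0^r g \le \int_{-\infty}^\infty |f(x+s\theta)|\dif s =: G(x,\theta)$ is too lossy, so instead I keep the localization and argue pointwise in $r$.

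The key one-dimensional lemma is the following. Fix the ray and let $A(r) := \int_0^r g(s)\,\dif s$, a nondecreasing function with $A(\infty) \le \|g\|_{L^1}=: \Lambda$. The set of $r$ for which $A(r) > r^{\gamma+1}$ I want to measure against $r^{\gamma-1}\dif r$. For $\gamma > 0$: if $r \le 1$ and $A(r) > r^{\gamma+1}$, then since $A(r) \le \Lambda$ we cannot have $r$ too small relative to $\Lambda$ once $\Lambda$ is small — more precisely I split at a threshold. The cleanest route: for each dyadic scale $r \sim 2^j$, the measure $\nu_\gamma$ of $\{r\sim 2^j, A(r)>r^{\gamma+1}\}$ is at most $\int_{r\sim 2^j} r^{\gamma-1}\dif r \lesssim 2^{\gamma j}$, but also, on this set $2^{\gamma j}\lesssim r^\gamma < A(r)/r \le \Lambda_j / 2^{j}$ where $\Lambda_j := \int_{2^{j-1}}^{2^{j+1}} g$ is the mass near scale $2^j$ — wait, this needs $A(r) \le \int_{[\text{comparable interval}]} g$, which is false since $A(r)$ accumulates mass from $[0,r]$. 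So I instead use $A(r) \le \Lambda$ globally, giving $2^{\gamma j} \lesssim \Lambda\, 2^{-j}$, i.e. $2^{(\gamma+1)j}\lesssim \Lambda$, so only scales $2^j \lesssim \Lambda^{1/(\gamma+1)}$ contribute; summing $\sum_{2^j \lesssim \Lambda^{1/(\gamma+1)}} 2^{\gamma j} \lesssim \Lambda^{\gamma/(\gamma+1)}$, which is \emph{not} linear in $\Lambda$. This shows a purely one-ray bound is insufficient and the integration over $x$ (or $\theta$) must be used — i.e. one needs the genuinely $N$-dimensional nature (and a clever exchange of order). So the right strategy, following the Vitali/covering approach alluded to in the text, is: for each $(x,y) \in E(f,\gamma)$ there is a scale $r=|x-y|$ and a segment on which $|f|$ has average exceeding $r^\gamma$; cover $\mathbb{R}^N$ by a Vitali-type family of balls adapted to the level sets of a maximal function $\mathcal{M}f(x) = \sup_{r>0} r^{-\gamma}\,\fint_{[x,x+r\theta]}|f|$ (suitably interpreted), then estimate $\nu_\gamma(E(f,\gamma))$ by summing contributions of the covering balls against $\|f\|_{L^1}$ via a weak-type argument.

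Concretely, I would proceed as follows. First, reduce to $\gamma$-cases separately; the $\gamma>0$ case is \eqref{eq:upper_bdd_1} and $\gamma<-1$ is \eqref{eq:upper_bdd_2}. Second, fix direction $\theta$ (by rotation invariance and averaging over $\mathbb{S}^{N-1}$ we may work with the coordinate direction $e_1$ and multiply by $\sigma_{N-1}$ at the end; the integral over the $(N-1)$ transverse variables factors out cleanly). Third, on each line $\ell = \{x_0 + s e_1\}$ set $g(s) = |f|_{\ell}(s) \in L^1(\mathbb{R})$ and let $E_\ell := \{(s,t)\in \mathbb{R}^2 : s\ne t,\ \int_s^t |g| > |s-t|^{\gamma+1}\}$ with weight $|s-t|^{\gamma-1}$; it suffices to show $\iint_{E_\ell}|s-t|^{\gamma-1}\dif s\dif t \le C(\gamma)\|g\|_{L^1(\mathbb{R})}$ with $C(\gamma) = C5^\gamma/\gamma$ for $\gamma>0$ and $C(\gamma)=C|\gamma|^{-1}(1+|\gamma+1|^{-1})$ for $\gamma<-1$, because then integrating over the transverse hyperplane yields exactly the stated bounds after reinserting $\sigma_{N-1}$. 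Fourth — the crux — prove this $1$D estimate. For that I use a stopping-time / Calderón--Zygmund decomposition of $g$ at level determined by the scale: for the $\gamma>0$ case, small scales $|s-t|\le 1$ dominate and one runs a dyadic decomposition $|s-t|\sim 2^{-k}$, $k\ge 0$; on $\{|s-t|\sim 2^{-k}\}\cap E_\ell$ the pair $(s,t)$ forces an interval $I\ni s$ of length $\sim 2^{-k}$ with $\int_I g \gtrsim 2^{-k(\gamma+1)}$, so by a Vitali covering the Lebesgue measure of the set of such $s$ is $\lesssim 2^{k(\gamma+1)}\int g$ — but capped by the trivial bound (total length), and then $\iint \lesssim \sum_k 2^{-k}\cdot \min(2^{k(\gamma+1)}\|g\|_1,\ |\operatorname{supp}|)\cdot 2^{-k\gamma}$; the geometric series in $k$ converges precisely because $\gamma>0$ and sums to $\lesssim \gamma^{-1}\|g\|_1$ up to the $5^\gamma$ factor coming from the overlap constant in the covering (a Vitali $5r$-covering). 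For $\gamma<-1$ large scales dominate, $|s-t|\sim 2^k$ with $k$ possibly negative, and the same argument with the roles of "trivial bound" and "covering bound" exchanged gives convergence of $\sum_k 2^{-k}\cdot 2^{k(\gamma+1)}\cdot 2^{-k\gamma}\|g\|_1 = \sum_k 2^{0\cdot k}$ — no, one must be more careful and split into the regime where $2^{k(\gamma+1)}\|g\|_1$ is the binding bound versus where length is binding, yielding the factor $(1+|\gamma+1|^{-1})$ from the two geometric tails meeting at the crossover scale.

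I expect the main obstacle to be bookkeeping the constants sharply — in particular getting the clean $5^\gamma/\gamma$ in \eqref{eq:upper_bdd_1} and the $|\gamma|^{-1}(1+|\gamma+1|^{-1})$ in \eqref{eq:upper_bdd_2}, which requires choosing the covering lemma (Vitali $5r$) and the dyadic crossover scale with the right normalization, and handling the localization of the segment $[x,y]$ to one side of $x$ (the integral $\int_s^t$ with $s<t$ versus the symmetric double integral) without losing a factor that blows up as $\gamma\to 0^+$ or $\gamma\to -1^-$. A secondary technical point is the reduction from $E(f,\gamma)$ (defined via $\int_{[x,y]}|f|$, a genuine segment integral in $\mathbb{R}^N$) to the one-dimensional picture: this uses the coarea-type identity $\int_{\mathbb{R}^N\times\mathbb{R}^N} \Phi(x,y)\,\dif x\,\dif y = \int_{\mathbb{S}^{N-1}}\int_{\theta^\perp}\int_{\mathbb{R}}\int_{\mathbb{R}} \Phi(x_0+se_\theta,\ x_0+te_\theta)\,|s-t|^{N-1}\,\frac{\dif s\,\dif t}{|s-t|^{N-1}}\cdots$ — more carefully, parametrizing by base line and two parameters — and one must check the Jacobian so that $|x-y|^{\gamma-N}\dif x\dif y$ becomes exactly $|s-t|^{\gamma-1}\dif s\,\dif t$ times the transverse Lebesgue measure times $\dif\theta$; this is a standard computation but worth doing once explicitly to pin the $\sigma_{N-1}$.
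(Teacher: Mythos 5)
Your reduction to dimension one via the method of rotations is exactly the paper's first step, and your diagnosis that a single-scale or purely global bound cannot be linear in $\|g\|_{L^1}$ is correct. The gap is in the one-dimensional estimate itself: the dyadic, scale-by-scale Vitali argument you propose does not close. At a fixed scale $|s-t|\sim 2^{-k}$ (take $\gamma>0$), the Vitali selection gives at most $M_k\le 2^{k(\gamma+1)}\|g\|_{L^1}$ disjoint intervals of length $\sim 2^{-k}$ each carrying mass $\gtrsim 2^{-k(\gamma+1)}$, hence $|S_k|\lesssim 2^{-k}M_k$ and a contribution $\lesssim 2^{-k\gamma}|S_k|\lesssim 2^{-k(\gamma+1)}M_k\le \|g\|_{L^1}$. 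This is a \emph{constant} per scale, not a geometric series: the same mass of $g$ is re-used by the covering at every scale. The cap by the support length does not rescue you, since $|\operatorname{supp} g|$ along a line is not controlled by $\|g\|_{L^1}$, and the crossover analysis leaves a middle range of scales, of cardinality roughly $\log(|{\operatorname{supp}}\,g|/\|g\|_{L^1}^{c})$, each contributing $\approx\|g\|_{L^1}$; one checks on $g=c\,\bbone_{[0,1]}$ with $c$ small that essentially every scale between $c^{1/\gamma}$ and the crossover is "critical". The same defect appears in your $\gamma<-1$ discussion, where you yourself notice the sum degenerates to $\sum_k 2^{0\cdot k}$ and defer the fix to an unspecified "more careful" splitting.

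What makes the linear bound work is a genuinely multi-scale selection in which each unit of mass of $g$ is counted once. For $\gamma>0$ (the case the paper delegates to \cite[Proposition 2.1]{Brezis_VanSchaftingen_Yung_2021}) one applies the Vitali covering lemma \emph{once} to the whole family of intervals $[s,t]$ with $\int_{[s,t]}g>|s-t|^{\gamma+1}$ (their lengths are bounded by $\|g\|_{L^1}^{1/(\gamma+1)}$), obtaining disjoint $I_j$ with $E\subseteq\bigcup_j 5I_j\times 5I_j$, and then uses the local identity $\iint_{I\times I}|s-t|^{\gamma-1}\dif s\dif t=\tfrac{2}{\gamma(\gamma+1)}|I|^{\gamma+1}$ together with the selection condition $|I_j|^{\gamma+1}\le\int_{I_j}g$ and the disjointness $\sum_j\int_{I_j}g\le\|g\|_{L^1}$; this is where the factor $5^{\gamma}/\gamma$ comes from, and it is precisely the computation your scheme never performs (you bound $|S_k|$ and multiply by the $t$-measure instead, which loses the cross-scale disjointness). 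For $\gamma<-1$ the paper uses a different device: a left-to-right stopping time partitioning $\operatorname{supp} g$ into consecutive intervals $I_i$ with $|I_i|^{-(\gamma+1)}\int_{I_i}g=\tfrac12$; one then shows that any pair in $E$ must straddle a partition point $a_i$ at distance $|s-t|\ge\min\{|I_i|,|I_{i-1}|\}$, computes the resulting integral exactly (this is where $1+|\gamma+1|^{-1}$ arises), and sums using $\min\{|I_i|,|I_{i-1}|\}^{\gamma+1}\le 2\int_{I_{i-1}\cup I_i}g$. Either of these mechanisms would repair your argument; as written, the proof has a logarithmic loss that cannot be removed by rebalancing the dyadic bookkeeping.
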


Indeed, to deduce  \cref{theorem_weighted_upper_Lp_allgamma} from \cref{proposition_weighted_upper_L1} one argues as  in the proof of \eqref{eq:new_char_sob_norm} in \cite{Brezis_VanSchaftingen_Yung_2021};  for $u \in C^1(\R^N)$ and $1 \leq p < \infty$, one has
\[
|u(x)-u(y)|^p \leq \Big[ \int_{[x,y]} |\nabla u|\dif s \Big]^p \leq  \int_{[x,y]} |\nabla u|^p \dif s\, |x-y|^{p-1}
\]
for all $x, y \in \R^N$, which implies that
\[
E_{\lambda,\gamma/p}[u] \subseteq E(\lambda^{-p} |\nabla u|^p,\gamma).
\]
Hence for $u \in C^1(\R^N)$ whose gradient is compactly supported, one establishes \cref{theorem_weighted_upper_Lp_allgamma} by applying \cref{proposition_weighted_upper_L1} with $f \defeq \lambda^{-p} |\nabla u|^p$. The extension to $u \in \dot W^{1,p}$ will be taken up in \cref{sec:extending-to-Sobolev}.

\begin{proof}[Proof of \cref{proposition_weighted_upper_L1}]
As in the proof of \cite[Proposition 2.2]{Brezis_VanSchaftingen_Yung_2021}, using the method of rotation, we only need to prove \cref{proposition_weighted_upper_L1} for $N = 1$. Indeed, 
\[
\iint_{E(f,\gamma)} |x-y|^{\gamma-N} \dif x \dif y = \frac{1}{2} \int_{\Sset^{N-1}} \int_{\omega^{\perp}} \iint_{E(f_{\omega,x'},\gamma)} |r-s|^{\gamma-1} \dif r \dif s \dif x' \dif \omega
\]
where for every $\omega \in \Sset^{N-1}$ and every $x' \in \omega^{\perp}$, $f_{\omega,x'}$ is a function of one real variable defined by
\[
f_{\omega,x'}(t) \defeq f(x'+t\omega).
\]
The innermost double integral can be estimated by the case $N=1$ of \cref{proposition_weighted_upper_L1}, and 
\[
\int_{\Sset^{N-1}} \int_{\omega^{\perp}} \int_{\R} |f_{\omega,x'}(t)| \dif t \dif x' \dif \omega = \sigma_{N-1} \|f\|_{L^1(\R^N)}.
\] 
Thus from now on, we assume $N = 1$ and $f \in C_c(\R)$.

If $\gamma > 0$, the desired estimate \eqref{eq:upper_bdd_1} is the content of \cite[Proposition 2.1]{Brezis_VanSchaftingen_Yung_2021}.
On the other hand, suppose now $\gamma < -1$. Without loss of generality, assume $f \geq 0$ on $\R$. 
In addition, we may assume that $f$ is not identically zero, for otherwise there is nothing to prove. 

Let
\[
E_+(f,\gamma) \defeq \{(x,y) \in E(f,\gamma) \colon y < x\}.
\]
Then by symmetry,
\[
\iint_{E(f,\gamma)} |x-y|^{\gamma-1} \dif x \dif y = 2 \iint_{E_+(f,\gamma)} |x-y|^{\gamma-1} \dif x \dif y,
\]
and it suffices to estimate the latter integral.

In what follows we will need to always keep in mind that in view of our assumption  $\gamma<-1$ we have  \(-(\gamma+1)=|\gamma|-1>0.\) We will now  use a simple stopping time argument based on the fact that for all $c\in \bbR$  the continuous function
\[ x \mapsto (x-c)^{-(\gamma+1)}\int_c^x f(s) \dif s,  \quad x\ge c\]
increases from $0$ to $\infty$ on $[c,\infty).$  

Assume that \(\supp \,f \subseteq [a, b]\).
We construct a finite sequence of intervals $I_1, \dots, I_K$, that are disjoint up to end-points, that  cover $\text{supp}\, f = [a,b]$, and that satisfy
\begin{equation} \label{eq:I_idef1}
|I_i|^{-(\gamma+1)} \int_{I_i} f  = \frac{1}{2} \quad \text{for $1 \leq i \leq K$}.
\end{equation}
Indeed, we may take $a_1 \defeq a$, and $a_2 > a_1$ to be the unique number for which \[(a_2-a_1)^{-(\gamma+1)} \int_{a_1}^{a_2} f = 1/2,\] and set $I_1 \defeq [a_1,a_2]$. If $a_2 < b$, we may now repeat, and take $I_2 \defeq [a_2,a_3]$ where $a_3 > a_2$ is the unique number for which $(a_3-a_2)^{-(\gamma+1)} \int_{a_2}^{a_3} f  = 1/2$. Note that the $a_i$'s chosen as such satisfy 
\[
(a_{i+1}-a_i)^{-(\gamma+1)} \geq \frac{1}{2} \|f\|_{L^1(\R)}^{-1},
\]
so that $a_{i+1}-a_i \geq (2 \|f\|_{L^1(\R)})^{1/(\gamma + 1)}$.
This shows that in finitely many steps, we would reach $a_{K+1} \geq b$ for some $K \geq 1$,  with  $a_K<b$ if $1\le K$. Then we have our sequence of disjoint (up to endpoints) intervals $I_1, \dots, I_K$ that cover $[a,b]$ and satisfy \eqref{eq:I_idef1}. We also write $I_0 \defeq (-\infty,a_1]$
and \(I_{K + 1} \defeq [a_{K + 1}, +\infty)\).

We now claim that \(I_i \times I_i \cap E_+(f,\gamma) = \emptyset\) for every $0 \leq i \leq K + 1$.
This being trivially the case when \(i \in \{0, K + 1\}\), we consider the case \(i \in \{1, \dotsc, K\}\): any \(x, y \in I_i\) satisfy
\[
|x-y|^{-(\gamma+1)} \abs[\bigg]{\int_y^x f} \leq |I_i|^{-(\gamma+1)} 
\int_{I_i} f = \frac{1}{2} < 1.
\]
It follows thus that 
\begin{equation}
\label{eq_eik1ahbu6yaog1OobaiK1ied}
 E_+(f,\gamma) =
 \bigcup_{i = 1}^{K + 1} E_+ (f, \gamma) \cap ((a_i, +\infty) \times (-\infty, a_i))
\end{equation}

Furthermore, for \(i \in \{2, \dotsc, K\}\), if $y < a_i < x$ and $x-y < \min\{|I_i|,|I_{i-1}|\}$, then
\begin{align*}
|x-y|^{-(\gamma+1)} \abs[\bigg]{\int_y^x f} < \min\{|I_i|,|I_{i-1}|\}^{-(\gamma+1)} \Big( \int_{I_{i-1}} f+\int_{I_i} f\Big) &\\
 \leq |I_{i-1}|^{-(\gamma+1)} \int_{I_{i-1}} f + |I_i|^{-(\gamma+1)} \int_{I_i} f 
 \leq \frac{1}{2} + \frac{1}{2} = 1,&
\end{align*}
(again we used $\gamma < -1$ so that $-(\gamma + 1) > 0$ here), from which it follows that $(x,y) \not \in E_+(f,\gamma)$. 
Combining this with a similar argument for \(i \in \{1, K + 1\}\), we get that if \((x, y) \in E_+ (f,\gamma) \cap (a_i, +\infty) \times (-\infty, a_i)\), then \(\abs{x - y} \ge \min \{\abs{I_i}, \abs{I_{i - 1}}\}\),
and thus 
\begin{equation*}
\begin{split}
& \int_{E_+ (f, \gamma) \cap (a_i, +\infty) \times (-\infty, a_i)}
 \abs{x - y}^{\gamma - 1} \dif x \dif y
\\&\quad \le
 \int_{a_i}^\infty \int_{-\infty}^{\min\{a_i, x - \min \{\abs{I_i}, \abs{I_{i - 1}}\}\}} \abs{x - y}^{\gamma - 1} \dif y \dif x
 \\&\quad = \frac{1}{\abs{\gamma}} \int_{a_i}^\infty (\max\{x - a_i, \min \{\abs{I_i}, \abs{I_{i - 1}}\}\})^\gamma \dif x\\
 &\quad = \frac{1}{\abs{\gamma}}\brk*{1 + \frac{1}{\abs{\gamma + 1}}} \min\{\abs{I_i}, \abs{I_{i - 1}}\}^{\gamma + 1}
 \le \frac{2}{\abs{\gamma}}\brk*{1 + \frac{1}{\abs{\gamma + 1}}}
 \int_{I_{i - 1} \cup I_{i}} f.
 \end{split}
\end{equation*}
(The computation of these integrals uses our assumption  $\gamma + 1 < 0$.)
Summing the estimates, we get in view of \eqref{eq_eik1ahbu6yaog1OobaiK1ied}
\begin{equation*}
\begin{split}
 \int_{E_+ (f, \gamma)}
 \abs{x - y}^{\gamma - 1} \dif x \dif y
\le \frac{4}{\abs{\gamma}}\brk*{1 + \frac{1}{\abs{\gamma + 1}}}
  \int_{\Rset} f.
 \end{split}
\end{equation*}
 We have thus completed the proof of \eqref{eq:upper_bdd_2} under the assumption $\gamma < -1$ and \(N = 1\).
\end{proof}

\subsection{Proof of \cref{proposition_Mp_gamma_-1_0_p} and \cref{theorem_weighted_upper_Lp_allgamma} for  general $\dot W^{1,p}$ functions}
\label{sec:extending-to-Sobolev}

We use a limiting argument, together with the following fact: if $u \in \dot{W}^{1,p}(\R^N)$, $N \ge 1$, and $1 \leq p < \infty$, then there exists a Lebesgue measurable set $X \subset \R^{2N}$ with $\mathcal{L}^{2N}(X) = 0$, so that for every $(x,h) \in \R^{2N} \setminus X$, we have
\begin{equation} \label{eq:Sob_FTC}
u(x+h)-u(x) = \int_0^1 \langle h, \nabla u(x+th) \rangle \dif t.
\end{equation}
Indeed, both sides are measurable functions of $(x,h) \in \R^{2N}$, and if $X$ is the set of all $(x,h)$ where the two sides are not equal, then $X$ is a measurable subset of $\R^{2N}$, and the assertion will follow from Fubini's theorem if for every fixed $h \in \R^N$, we have $\cL^N (\{x \in \R^N \colon (x,h) \in X\}) = 0$, i.e.\ \eqref{eq:Sob_FTC} holds for $\cL^N$ almost every $x$. This follows since for every $\phi \in C^{\infty}_c(\R^N)$, one has
\begin{align*}
&\int_{\R^N} [u(x+h)-u(x)] \phi(x) \dif x = \int_{\R^N} u(x) [\phi(x-h) - \phi(x)] \dif x \\
&= -\int_{\R^N} u(x) \int_0^1 \langle h, \nabla \phi(x-th) \rangle \dif t \dif x = \int_{\R^N} \int_0^1 \langle h, \nabla u(x) \rangle \phi(x-th) \dif t \dif x \\
&= \int_{\R^N} \int_0^1 \langle h, \nabla u(x+th) \rangle \dif t \, \phi(x) \dif x.
\end{align*}

Now given \(u \in \dot{W}^{1, p} (\Rset^N)\), there exists a sequence \(u_n \in C^\infty (\Rset^N)\) such that $\nabla u_n$ are compactly supported, and 
\begin{equation}\label{eq:convergence}
 \|\nabla(u_n-u) \|_{L^p(\bbR^N)}  \to 0.
\end{equation}
Indeed if $N > 1$ and $p \geq 1$, or if $N = 1$ and $p > 1$, then this follows from the density of $C^{\infty}_c(\R^N)$ in $\dot{W}^{1,p}(\R^N)$ as asserted in \cite{hajlasz-kalamajska} (in this case one may choose $u_n \in C^{\infty}_c(\R^N)$). The density of $C^{\infty}_c(\R^N)$ in $\dot{W}^{1,p}$ {\it fails} when $N = p = 1$ (again see \cite{hajlasz-kalamajska}); the issue is that if $\nabla u$ is supported in a convex set in $\bbR^N$, $N\ge 2$, then $u$ is constant in the complement of the set, but this fails for $N=1$ since the complement of a bounded interval has two connected components. On the other hand, in the anomalous case $N = 1$ and $p = 1$, one can choose an approximation of the identity to get a sequence $v_n$ of $C^\infty_c$ functions on $\R$ such that $\|v_n-u'\|_{L^1(\R)} \to 0$. One can then take $u_n(x)\defeq \int_{0}^x v_n(t) \dif t$, and \eqref{eq:convergence} follows with $u_n'=v_n$ being compactly supported (even though $u_n$ may not be compactly supported).

 Let, for $R>1$,
 \[
 K_R=\{(x,y)\in \bbR^{2N}:  \, |x|\le R, \,|y|\le R
 \text{ and } R^{-1}\le |x-y| \}.
 \]
By monotone convergence it suffices to prove 
\Be\label{eq:bound-on-KR}
  \nu_\ga(E_{\lambda,\gamma/p} [u] \cap K_R)
    \le C \frac{\norm{\nabla u}_{L^p (\Rset^N)}^p}{\lambda^p}.
\Ee
with $C$ independent of  $R$. 

Under the assumptions of \cref{proposition_Mp_gamma_-1_0_p} and \cref{theorem_weighted_upper_Lp_allgamma} on $p$ and $\ga$, since $u_n \in C^{\infty}_c(\R^N)$, we already know 
\[
  \nu_\ga(E_{\lambda,\gamma/p} [u_n])
    \le C \frac{\norm{\nabla u_n}_{L^p (\Rset^N)}^p}{\lambda^p}.
\]
Moreover, the sequence $Q_{\gamma/p} u_n$ converges to  $Q_{\ga/p}u$ in $L^p(K_R)$ as $n \to \infty$.
Indeed, using \eqref{eq:Sob_FTC} we may write 
\[
Q_{\gamma/p} u(x,y) = \frac{1}{|x-y|^{\ga/p}} \int_0^1 \biggl\langle \frac{x-y}{|x-y|}, \nabla u((1-t)y+tx) \biggr\rangle \dif t
\]
for $\mathcal{L}^{2N}$ a.e.\ $(x,y) \in \R^{2N}$, and similarly for $u_n$ in place of $u$, which allows us to estimate
\begin{align*}
  & \Big(\iint_{K_R}  |Q_{\ga/p}u_n(x,y)-Q_{\ga/p}u (x,y)|^p \dif x\dif y \Big)^{1/p} 
  \\&\le 
  R^{\frac\ga p} \int_0^1\Big(\int_{|x|\le R}\int_{|y|\le R} |\nabla (u_n-u)((1-s) x+ sy ) |^p \dif x\dif y\Big)^{1/p} \dif s
  \\
  &\le 2^{N/p} (2R)^{N/p} R^{\ga/p} 
  \|\nabla (u_n-u_{n+1}) \|_p \to 0.
\end{align*}
By passing to a subsequence if necessary, we may assume that $Q_{\gamma/p} u_n$ converges  $\cL^{2N}$-a.e.\ to  $Q_{\ga/p}u$ on $K_R$ as $n \to \infty$. Thus 
\[
K_R\cap E_{\lambda,\gamma/p} [u]
\subseteq  K_R\cap \Big(\bigcup_{n \in \Nset} \bigcap_{\ell \ge n}
E_{\lambda, \gamma/p}[u_\ell]\Big) 
\]
which implies
\[
\begin{split} \nu_\ga(K_R\cap E_{\lambda,\gamma/p} [u])
&\le \lim_{n \to \infty} \nu_{\gamma}\biggl(K_R\cap \bigcap_{\ell \ge n}
E_{\lambda, \gamma/p}[u_\ell]\biggr) \le 
\liminf_{n \to \infty}\nu_\ga (K_R\cap E_{\lambda,\gamma/p} [u_n]) \\
  &\le C \liminf_{n \to \infty} \frac{\norm{\nabla u_n}_{L^p (\Rset^N)}^p}{\lambda^p}
  \le C \frac{\norm{\nabla u}_{L^p (\Rset^N)}^p}{\lambda^p}.
\end{split}
\]

\subsection{\it Proof of  \cref{theorem_weighted_upper_Lp_allgamma} for $\dot{BV}$-functions} \label{sec:BV-extension}

We choose a sequence  $\rho_n\in C^\infty_c(\Rset^N)$ with $\rho_n=2^{nN}\rho(2^n\cdot)$ and $\int_{\Rset^N} \rho\,\dif x=1$ and set $u_n\coloneqq u*\rho_n$. Then $u_n\in \dot W^{1,1}(\Rset^N)$ and $u_n\to u$ almost everywhere. This means if $G_L \coloneqq \{(x,h) \in \Rset^N \times \Rset^N : |x|\le L, \,L^{-1}\le |h|\le L\}$ then
\[ 
\lim_{L\to\infty} \nu_\gamma (E_{\la,\ga}[u_n]\cap G_L)  =
\nu_\ga(E_{\la,\ga}[u]\cap G_L),
\]
by dominated convergence. Also
\[
\begin{split}
&\|\nabla u_n\|_{L^1(\Rset^N)} =\sup_{\substack{\vec \phi\in C^\infty_c\\ \|\phi\|_\infty\le 1}} \Big|\int u_n (x)\,\mathrm{div} \vec\phi(x)\dif x\Big|=
\\&=\sup_{\substack{\vec \phi\in C^\infty_c\\ \|\phi\|_\infty\le 1}} \Big|\int u (x) \,\mathrm{div} (\rho_n*\vec\phi)(x)\dif x\Big|
\le \|\nabla u\|_{\mathcal{M}};
\end{split}
\]
here we used  $\|\rho_n*\vec\phi\|_\infty\le \|\vec\phi\|_\infty$ for the last inequality. Combining these two limiting identities with \cref{theorem_weighted_upper_Lp_allgamma} we get the desired inequalities with $E_{\la,\ga}[u] $ replaced by $E_{\la,\ga}[u]\cap G_L$. By monotone convergence we may finish the proof letting $L\to\infty$.
\qedhere

\section{Proof of \cref{theorem_weighted_reverse}}
\label{sec:proofoflimits}
We extend and refine arguments from \cite{BN2018},  \cite{Brezis_VanSchaftingen_Yung_2021} which are partially inspired by techniques developed in \cite{Bourgain_Brezis_Mironescu_2001}.
\subsection{\it A Lebesgue differentiation lemma}
 Our argument uses the following standard variant of the Lebesgue differentiation theorem. For lack of a proper reference, a proof is provided for the convenience of the reader.
 \begin{lemma}\label{lem:Lebesguevariant} 
 Let $u\in \dot W^{1,1}(\bbR^N)$ and let $\{\delta_n\}$ be a sequence of positive numbers with $\lim_{n\to\infty} \delta_n=0$. Then 
 \[
 \lim_{n\to \infty}  \frac{u(x+\delta_n h)-u(x)}{\delta_n} = \inn{h}{\nabla u(x)}
 \]
 for almost every $(x,h)\in \bbR^N\times \bbR^N$.
 \end{lemma}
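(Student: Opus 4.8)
The plan is to reduce the statement to a standard Lebesgue differentiation / density argument by a clean change of variables and an approximation step. First I would treat the one-dimensional building block: for a fixed direction $h$, write $u(x+\delta_n h)-u(x) = \int_0^1 \langle h, \nabla u(x+t\delta_n h)\rangle\,\dif t$ using the absolute continuity on lines available for $\dot W^{1,1}$ functions (equivalently \eqref{eq:Sob_FTC}, which is stated just above in the excerpt). Thus
\[
\frac{u(x+\delta_n h)-u(x)}{\delta_n} - \langle h,\nabla u(x)\rangle = \int_0^1 \langle h, \nabla u(x+t\delta_n h) - \nabla u(x)\rangle\,\dif t,
\]
so it suffices to show that for a.e.\ $(x,h)$ one has $\int_0^1 |\nabla u(x+t\delta_n h) - \nabla u(x)|\,\dif t \to 0$ as $n\to\infty$.

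Next I would establish this convergence by a two-step argument. For $g \defeq \nabla u \in L^1(\R^N,\R^N)$, define $A_n(x,h) \defeq \int_0^1 |g(x+t\delta_n h) - g(x)|\,\dif t$. Approximate $g$ in $L^1$ by a continuous compactly supported $G$; for $G$ one has $A_n^G(x,h)\to 0$ pointwise everywhere by uniform continuity, since $|t\delta_n h|\le \delta_n|h|\to 0$. For the error $E = g - G$, I would bound, for each fixed $R>0$, the integral $\iint_{|h|\le R}\int_0^1 |E(x+t\delta_n h)|\,\dif t\,\dif h\,\dif x$ by Fubini (integrate in $x$ first, using translation invariance of Lebesgue measure) to get $c_{N,R}\|E\|_{L^1}$, and similarly $\iint_{|h|\le R} |E(x)|\,\dif h\,\dif x = c_{N,R}\|E\|_{L^1}$. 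Hence $\limsup_n \iint_{|h|\le R} A_n(x,h)\,\dif x\,\dif h \le 2 c_{N,R}\|g-G\|_{L^1}$, which can be made arbitrarily small; combined with $A_n \ge 0$ this forces $A_n \to 0$ in $L^1_{\mathrm{loc}}(\R^{2N})$, hence along a subsequence a.e. To upgrade to a.e.\ convergence of the full sequence, I would replace the crude bound $A_n^G\to 0$ pointwise with a maximal-function argument: control $\sup_n A_n(x,h)$ (for $|h|\le R$) by $c_R\,\big(M g(x) + |g(x)|\big)$ via the Hardy–Littlewood maximal operator along the segment, which is finite a.e., and then use that for the dense class $G$ the convergence holds everywhere; the standard Banach principle then gives a.e.\ convergence of $A_n$ for the original $g$ and every fixed $R$, and letting $R\to\infty$ through integers completes the proof.

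The main technical obstacle, and the only place requiring care, is handling the fact that $h$ ranges over an unbounded set while the increment $\delta_n h$ is not uniformly small: the naive "the shift goes to zero" heuristic only works locally in $h$. This is why I localize to $|h|\le R$ and exhaust $\R^N$ by such balls — a countable union of null sets is null, so a.e.\ convergence on each $\{|h|\le R\}$ yields a.e.\ convergence on $\R^{2N}$. A secondary minor point is justifying the Fubini swap and the line-integral representation on a full-measure set of $(x,h)$; both are exactly the content of the displayed identity \eqref{eq:Sob_FTC} and its derivation given earlier in the excerpt, so I would simply invoke that. No other difficulties are expected; the argument is a routine adaptation of the proof of the Lebesgue differentiation theorem to this parametrized family of averages.
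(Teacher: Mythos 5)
Your overall strategy coincides with the paper's: reduce via the line-integral representation \eqref{eq:Sob_FTC} to controlling $\int_0^1|\nabla u(x+t\delta_n h)-\nabla u(x)|\,\dif t$, approximate $\nabla u$ in $L^1$ by a nice function for which the convergence is immediate, localize in $h$, and handle the error term with a maximal operator plus a density argument. One step, however, is stated imprecisely enough to be false as written: the segment average $\int_0^1|g(x+t\delta_n h)|\,\dif t=\frac{1}{\delta_n|h|}\int_0^{\delta_n|h|}|g(x+s\tfrac{h}{|h|})|\,\dif s$ is \emph{not} pointwise dominated by the Hardy--Littlewood maximal function $Mg(x)$ (it is an average over a set of $N$-dimensional measure zero, and $g$ may concentrate near the line through $x$ in direction $h/|h|$). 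The correct object is the one-dimensional directional maximal function $\fM_{\theta}g(x)=\sup_{t>0}t^{-1}\int_0^t|g(x+r\theta)|\,\dif r$ with $\theta=h/|h|$, which is what the paper uses. Moreover, for the Banach principle you need more than a.e.\ finiteness of $\sup_n A_n$: you need the quantitative weak-type $(1,1)$ bound $\cL^N(\{x:\fM_\theta g(x)>a\})\le 5a^{-1}\|g\|_1$, uniform in $\theta$, which after integration in $h$ over $\{|h|\le R\}$ (or an annulus, as in the paper) shows that the exceptional set in $(x,h)$ has measure controlled by $\|g-G\|_{L^1}$ and hence can be made arbitrarily small. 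With that substitution your argument closes and is essentially the paper's proof; the preliminary $L^1_{\loc}$-convergence/subsequence step is then superfluous.
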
 
 \begin{proof} 
 If $u\in C^1$ with compact support the limit relation  clearly holds for all $(x,h)$. We shall below consider for each $\theta\in \bbS^{N-1}$ consider the maximal function
\[ 
\fM_\theta F(x)= \sup_{t>0} \frac{1}{t}  \int_0^t |F(x+r\theta) | \dif r
\] 
which is well defined for all $\theta$, a measurable function on $\bbR^N \times \bbS^{N-1}$, and satisfies a weak type $(1,1)$ inequality
\[
\cL^N(\{x\in \bbR^N: \fM_\theta F(x)>a\})\le 5a^{-1} \|F\|_1.
\]
 
 Let $u \in \dot{W}^{1,1}(\R^N)$ and $\cA_M=\{h\in \bbR^N: 2^{-M}\le |h|\le 2^M\}$. It suffices to prove the limit relation for almost  every $(x,h)\in \bbR^N\times \cA_M$.
 From \eqref{eq:Sob_FTC} we get that for every $n \ge 1$,
 \[
 \frac{u(x+\delta_n h)-u(x)}{\delta_n} = \frac 1{\delta_n|h|}\int_0^{\delta_n|h|} \inn{h}{\nabla u(x+ r\tfrac{h}{|h|}} ) \dif r
 \]
 for $\mathcal{L}^{2N}$ almost every $(x,h) \in \R^N \times \cA_M$; as a result, there exist representatives of $u$, $\nabla u$ and  a null set $\cN\in \bbR^N\times\cA_M$ such that the identity holds for all $(x,h)\in \cN^\complement$ and all $n \geq 1$. 
 It suffices to show that for every $\alpha>0$, $\epsilon>0$
 \Be\label{eq:wttoshow}
\mathcal{L}^{2N}\Big(\Big\{ (x,h)\in \bbR^N\times \cA_M:  \limsup_{n\to\infty}
 \Big|
 \frac{1}{\delta_n|h|}\int_0^{\delta_n|h|} \inn{h}{\nabla u(x+ rh)} \dif r  - \inn{h}{\nabla u(x)} \Big|>\alpha\Big\}\Big)
 \le \epsilon.
 \Ee

Let $v\in C^1_c$ so that $\|\nabla(v-u)\|_1\le \alpha \epsilon / (12 \mathcal{L}^N (\cA_M))$.
Let $g=u-v$. Since the asserted limiting relation holds for $v$, we see that the   expression on the left  hand side of \eqref{eq:wttoshow} is dominated by
\begin{align*} 
&\mathcal{L}^{2N}\Big(\big\{ (x,h)\in \bbR^N\times \cA_M: |\nabla g(x)|+\sup_{n>0}  \frac{1}{\delta_n|h|} \int_0^{\delta_n|h|} |\nabla g(x+ r\tfrac{h}{|h|}) |\dif r  >\alpha\big\}\Big)
 \\ &\le 2 \mathcal{L}^N(\cA_M) \alpha^{-1}  \|\nabla g\|_1
 + \int_{\cA_M}  \cL^N(\{x: \fM_{h/|h|} |\nabla g|(x) > \alpha/2\}) \dif h
\\&  \le 12 \mathcal{L}^N(\cA_M) \alpha^{-1} \|\nabla g\|_1
 \le \epsilon
 \end{align*}
 since $\|\nabla g\|_1\le \alpha \epsilon / (12 \mathcal{L}^N (\cA_M))$.
 \end{proof}

\subsection{\it The lower bounds for $\liminf \la^p\nu_\ga(E_{\la,\ga/p} [u])$}
\label{sec:proofofliminfs}

We use \cref{lem:Lebesguevariant} to establish lower bounds, relying on an idea in  \cite{BN2018} where the case $\ga=-1$ was considered.
\begin{lemma} \label{lem:liminfs}
Let $1\le p<\infty$ and $u\in \dot W^{1,p} (\bbR^N)$.
Then 
\begin{enumerate}[(i)]
\item For $\gamma>0$ 
\[ 
\liminf_{\la\to \infty} \la^p \nu_\gamma(E_{\la,\ga/p}[u]) \ge \frac{\ka(p,N)}{|\ga|} \|\nabla u\|_{L^p(\bbR^N)}^p
\]
\item For $\gamma<0$ 
\[ 
\liminf_{\la\searrow 0} \la^p \nu_\gamma(E_{\la,\ga/p}[u]) \ge \frac{\ka(p,N)}{|\ga|} \|\nabla u\|_{L^p(\bbR^N)}^p
\]
\end{enumerate}
\end{lemma}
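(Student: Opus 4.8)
The plan is to reduce both statements to a single pointwise lower bound obtained by a Fatou-type argument along a sequence $\lambda_n$ realizing the $\liminf$. First I would fix a sequence $\lambda_n$ (with $\lambda_n\to\infty$ in case (i), $\lambda_n\searrow 0$ in case (ii)) along which $\lambda_n^p\nu_\gamma(E_{\lambda_n,\gamma/p}[u])$ converges to the $\liminf$; we may assume this $\liminf$ is finite, otherwise there is nothing to prove. The key change of variables is to write $\nu_\gamma(E_{\lambda,\gamma/p}[u])$ in polar-type coordinates $x\in\bbR^N$, $h=x-y$, $h=\delta\theta$ with $\delta=|h|>0$ and $\theta\in\bbS^{N-1}$, so that
\[
\lambda^p\nu_\gamma(E_{\lambda,\gamma/p}[u])
=\lambda^p\int_{\bbR^N}\int_{\bbS^{N-1}}\int_0^\infty
\mathbbm 1\Big[\tfrac{|u(x)-u(x-\delta\theta)|}{\delta^{1+\gamma/p}}>\lambda\Big]\,\delta^{\gamma-1}\dif\delta\dif\theta\dif x .
\]
Then substitute $\delta=\lambda^{-p/\gamma}s$ in the innermost integral; since $\lambda^p\cdot\lambda^{-p\gamma/\gamma}=1$, the factor $\lambda^p$ is absorbed and one gets
\[
\lambda^p\nu_\gamma(E_{\lambda,\gamma/p}[u])
=\int_{\bbR^N}\int_{\bbS^{N-1}}\int_0^\infty
\mathbbm 1\Big[\Big|\tfrac{u(x)-u(x-\lambda^{-p/\gamma}s\,\theta)}{\lambda^{-p/\gamma}s}\Big|>s^{\gamma/p}\Big]\,s^{\gamma-1}\dif s\dif\theta\dif x .
\]
Note $\delta_n:=\lambda_n^{-p/\gamma}\to 0$ precisely because $\gamma>0$ forces $\lambda_n\to\infty$ while $\gamma<0$ forces $\lambda_n\searrow 0$ — this is the structural reason the two cases have opposite limits, as explained in Remark (i) of the paper.

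Next I would apply \cref{lem:Lebesguevariant} with the scalar increments $h$ replaced by $s\theta$: along the fixed sequence $\delta_n$, for a.e.\ $(x,\theta,s)$ one has $\delta_n^{-1}\big(u(x)-u(x-\delta_n s\theta)\big)\to s\,\langle\theta,\nabla u(x)\rangle$ (apply the lemma with the vector $s\theta$, or equivalently note the statement is rotation/dilation covariant in $h$; a routine Fubini argument lets one quantify over $s$ and $\theta$ as well). Consequently the integrand
$\mathbbm 1\big[\,|\delta_n^{-1}(u(x)-u(x-\delta_n s\theta))|>s^{\gamma/p}\,\big]$
converges, for a.e.\ $(x,\theta,s)$ with $|\langle\theta,\nabla u(x)\rangle|s\neq s^{\gamma/p}$, to $\mathbbm 1\big[\,|\langle\theta,\nabla u(x)\rangle|\,s>s^{\gamma/p}\,\big]=\mathbbm 1\big[\,s^{1-\gamma/p}>|\langle\theta,\nabla u(x)\rangle|^{-1}\,\big]$, i.e.\ to $\mathbbm 1\big[\,0<s<|\langle\theta,\nabla u(x)\rangle|^{p/(p-\gamma)}\,\big]$ when $p\neq\gamma$ (and the measure-zero set where $1-\gamma/p=0$ or $\nabla u(x)=0$ or $\theta\perp\nabla u(x)$ causes no harm). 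By Fatou's lemma applied to the triple integral,
\[
\liminf_{n\to\infty}\lambda_n^p\nu_\gamma(E_{\lambda_n,\gamma/p}[u])
\ge\int_{\bbR^N}\int_{\bbS^{N-1}}\int_0^{|\langle\theta,\nabla u(x)\rangle|^{p/(p-\gamma)}}s^{\gamma-1}\dif s\dif\theta\dif x .
\]
The inner $s$-integral equals $\frac1\gamma|\langle\theta,\nabla u(x)\rangle|^{\gamma p/(p-\gamma)\cdot\frac{p-\gamma}{p}\cdot\frac{p}{\,\cdot\,}}$ — more cleanly, $\int_0^{A}s^{\gamma-1}\dif s=A^{\gamma}/\gamma$ with $A=|\langle\theta,\nabla u(x)\rangle|^{p/(p-\gamma)}$, so $A^\gamma=|\langle\theta,\nabla u(x)\rangle|^{p\gamma/(p-\gamma)}$; I should instead substitute more carefully so the exponent comes out to $p$. (Redo: set $A=|\langle\theta,\nabla u(x)\rangle|^{?}$ so that $A^\gamma$ has the right power — one checks the correct threshold is $s<|\langle\theta,\nabla u(x)\rangle|^{p/(p-\gamma)}$ only when I am careless; the condition $|\langle\theta,\nabla u(x)\rangle|s>s^{\gamma/p}$ reads $s^{1-\gamma/p}<|\langle\theta,\nabla u(x)\rangle|$, hence $s<|\langle\theta,\nabla u(x)\rangle|^{\frac{1}{1-\gamma/p}}=|\langle\theta,\nabla u(x)\rangle|^{\frac{p}{p-\gamma}}$, and then $A^\gamma=|\langle\theta,\nabla u(x)\rangle|^{\frac{p\gamma}{p-\gamma}}$, which is \emph{not} $|\langle\theta,\nabla u(x)\rangle|^p$.) This tells me the substitution $\delta=\lambda^{-p/\gamma}s$ is the \emph{wrong} normalization: the correct one making the $\lambda$-powers cancel against the threshold is $\delta=(s/\lambda)^{p/\gamma}$ is likewise off — the right scaling is $\delta=\lambda^{-p/\gamma}s^{1}$ but with the indicator threshold kept as $\lambda\delta^{1+\gamma/p}$, giving after substitution the clean condition $|u(x)-u(x-\delta_n s\theta)|>\delta_n s$, i.e.\ threshold $s$, not $s^{\gamma/p}$. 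With that corrected bookkeeping the inner integral becomes $\int_0^{|\langle\theta,\nabla u(x)\rangle|}s^{\gamma-1}\cdot(\text{Jacobian }s\text{-power})\dif s$ and integrating the Jacobian $s^{p-1}$-type factor (coming from $\dif\delta=\text{const}\cdot s^{p/\gamma-1}\dif s$ after the correct change of variables $\delta=\lambda^{-p/\gamma}s^{p/\gamma}$... ) yields exactly $\frac1{|\gamma|}|\langle\theta,\nabla u(x)\rangle|^{p}$; then $\int_{\bbS^{N-1}}|\langle\theta,\nabla u(x)\rangle|^p\dif\theta=\kappa(p,N)|\nabla u(x)|^p$ by \eqref{eq:kpN}, and integrating in $x$ gives the claimed bound $\frac{\kappa(p,N)}{|\gamma|}\|\nabla u\|_{L^p}^p$.

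The main obstacle is precisely getting this change of variables and the resulting Jacobian/exponent arithmetic exactly right so that the powers of $\lambda$ cancel and the final $s$-integral produces the constant $1/|\gamma|$ and the $p$-th power $|\langle\theta,\nabla u(x)\rangle|^p$ — the substitution that works is $\delta=(\lambda^{-1}s)^{p/\gamma}$, under which $\lambda\delta^{1+\gamma/p}=\lambda\cdot(\lambda^{-1}s)^{p/\gamma+1}=s\cdot(\lambda^{-1}s)^{p/\gamma}=s\delta$, turning the indicator into $|u(x)-u(x-\delta\theta)|>\delta s$ and $\delta^{\gamma-1}\dif\delta=\frac{p}{|\gamma|}\lambda^{-p}s^{p-1}\dif s$ so the $\lambda^p$ is absorbed and one is left with $\frac{p}{|\gamma|}\int\!\!\int\!\!\int\mathbbm 1[\,|\delta^{-1}(u(x)-u(x-\delta\theta))|>s\,]s^{p-1}\dif s\dif\theta\dif x$ with $\delta=\delta_n\to0$; Fatou then gives $\ge\frac{p}{|\gamma|}\int\!\!\int\big(\int_0^{|\langle\theta,\nabla u(x)\rangle|}s^{p-1}\dif s\big)\dif\theta\dif x=\frac1{|\gamma|}\int\!\!\int|\langle\theta,\nabla u(x)\rangle|^p\dif\theta\dif x=\frac{\kappa(p,N)}{|\gamma|}\|\nabla u\|_{L^p}^p$. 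Besides this arithmetic, the only points needing care are: the measurability/Fubini justification so that \cref{lem:Lebesguevariant} can be invoked with the two extra parameters $(s,\theta)$, and discarding the null set where $\langle\theta,\nabla u(x)\rangle=0$ (where the limit indicator is $0$ a.e.\ in $s$ anyway, costing nothing in a lower bound). I expect no difficulty from the cases $p=1$ or $p=\gamma$ since the argument never divides by $p-\gamma$ in this corrected form.
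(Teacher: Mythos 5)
Your final, corrected argument is right and is essentially the paper's proof: the substitution $\delta=(\lambda^{-1}s)^{p/\gamma}$ is just the paper's rescaling $h\mapsto\delta h$ with $\delta=\lambda^{-p/\gamma}$ written in polar form (set $h=s^{p/\gamma}\theta$, so that the increment is $\delta_n h$ with $\delta_n=\lambda_n^{-p/\gamma}$ fixed, which is exactly how \cref{lem:Lebesguevariant} should be invoked to justify the a.e.\ convergence you need -- this also disposes of the fact that your $\delta$ depends on $s$), followed by Fatou and the computation $\int_{\bbS^{N-1}}|\langle\theta,\nabla u(x)\rangle|^p\dif\theta=\kappa(p,N)|\nabla u(x)|^p$. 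Your first attempt failed only because you divided the increment by $\delta$ rather than by the full length $\delta s$ of the increment vector; with that fixed, that normalization also works and is literally the paper's identity \eqref{eq:superlevelset}.
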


\begin{proof}
We write, for $\la>0$ and $\delta>0$
  \begin{align*} \la^p \nu_\ga(E_{\la,\ga/p}[u])&=\la^p \iint_{\frac{|u(x+h)-u(x)|}{|h|^{1+\ga/p}}> \lambda } |h|^{\ga-N}  \dif h\dif x
 \notag
 \\
 &=\la^p\delta^\gamma
 \iint_{\left|\frac{u(x+\delta h)-u(x)}{\delta |h|}\right|^p> \lambda^p \delta^{\ga} |h|^{\ga} } |h|^{\ga-N}  \dif h\dif x, 
 \end{align*}
 here we have changed variables replacing $h$ by $\delta h$. 
 Hence 
 \Be \label{eq:superlevelset}  
 \la^p \nu_\ga(E_{\la,\ga/p}[u])=
 \iint \bbone_{(|h|^{\ga},\infty)} \big( \big|\tfrac {u(x+\delta h)-u(x)}{\delta|h|} \big|^p   \big) \,  |h|^{\gamma-N} \dif h \dif x  \text{ with } \delta=\la^{-p/\ga}.
 \Ee
 We now take a sequence $\{\la_n\}$ of positive numbers, set $\delta_n =\la_n^{-p/\gamma} $ and note that 
  \Be\label{eq:which-limit} \lim_{n\to\infty} \delta_n=0 \text { if }
  \begin{cases} \lim_{n\to\infty}\la_n=\infty \text { and } \gamma>0,  
  \\
  \lim_{n\to\infty}\la_n =0 \text{ and } \gamma<0.\end{cases} 
  \Ee
  Also observe that 
 \[
 \liminf_{n\to \infty} \bbone_{(|h|^{\ga},\infty)}(s_n) \ge \bbone_{(|h|^{-\ga},\infty)}(t) \text{ if } \lim_{n\to\infty} s_n= t.
 \]
 Now assume that  $\la_n\to \infty$ if $\ga>0$ and $\la_n\to 0^+$ if $\ga<0$ and stay with $\delta_n=\la_n^{-p/\ga}$, a sequence which converges to $0$ in both cases.   Use Fatou's lemma in \eqref{eq:superlevelset}  and combine it with  \cref{lem:Lebesguevariant} to get
 \begin{align*}
 \liminf_{n\to\infty} \la_n^p \nu_\ga(E_{\la_n,\ga/p}[u])
 &\ge 
\iint \liminf_{n\to\infty} \bbone_{(|h|^{\ga},\infty)} \big(\big|\tfrac {u(x+\delta_n h)-u(x)}{\delta_n |h|} \big|^p \big) \,  |h|^{\ga-N } \dif h \dif x  
\\
&\ge 
\iint  \bbone_{(|h|^{\ga},\infty)} \big( \lim_{n\to\infty}\big|\tfrac {u(x+\delta_n h)-u(x)}{ \delta_n |h|} \big|^p \big) \,  |h|^{\ga-N } \dif h \dif x  
\\
&=
\iint_{|h|^{\ga} < \big|\inn{\tfrac h{|h|}}{\nabla u(x)}\big|^p} |h|^{\ga - N} \dif h \dif x   \eqqcolon J_{\ga}.
\end{align*}
We use polar coordinates  $h=r\theta$  and write the last expression as 
\begin{align*}
J_{\gamma}&
= \iint\limits _{\bbR^N\times\bbS^{N-1}}  \int\limits_{
 r^{\gamma} < |\inn{\theta}{\nabla u(x)} |^p } r^{\ga-1}\dif r\dif\theta\dif x
\\&= \frac{1}{|\ga|} \iint_{\bbR^N\times S^{N-1}}
|\inn{\theta}{\nabla u(x)}| ^p  \dif \theta\dif x= \frac{\kappa(p,N) }{|\ga| }
\|\nabla u\|_{L^p(\bbR^N)}^p,
\end{align*} 
with the calculation valid in both cases $\ga>0$ and $\ga<0$. 
\end{proof}

\subsection{\it {Upper bounds for $\limsup \la^p\nu_\ga(E_{\la,\ga/p}[u])$, for $C^1_c$ functions}}
\label{sec:proofoflimsups}

We assume that $u\in C^1$ is compactly supported and obtain the sharp upper bounds for $\limsup_{\la\to \infty} \la^p \nu_\gamma(E_{\la,\ga/p}[u])$ when $\ga>0$ and $\limsup_{\la\to 0} \la^p \nu_\gamma(E_{\la,\ga/p}[u])$ when $\ga<0$.
 
\begin{lemma} \label{lem:limsupsN>1}
Suppose $u\in C^1_c(\bbR^N)$ and $1\le p<\infty$.
Then the following hold. 
\begin{enumerate}[(i)]
\item If  $\gamma>0$ then 
\[
\limsup_{\la\to \infty} \la^p \nu_\gamma(E_{\la,\ga/p}[u]) \le \frac{\ka(p,N)}{|\ga|} \|\nabla u\|_{L^p(\bbR^N)}^p.
\]
\item If $\gamma<0$ then
\[
\limsup_{\la\searrow 0} \la^p \nu_\gamma(E_{\la,\ga/p}[u]) \le \frac{\ka(p,N)}{|\ga|} \|\nabla u\|_{L^p(\bbR^N)}^p.
\]
\item The statement in part (i) continues to hold for $u \in C^1(\R^N)$ whose gradient is compactly supported.
\end{enumerate}
\end{lemma}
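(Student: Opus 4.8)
The plan is to run a dominated–convergence (reverse Fatou) argument off the representation \eqref{eq:superlevelset} from the proof of \cref{lem:liminfs}. Setting $\delta=\la^{-p/\ga}$, so that $\delta\searrow 0$ exactly when $\la\to\infty$ in case $\ga>0$ and when $\la\searrow 0$ in case $\ga<0$, we have
\[
\la^p\nu_\ga(E_{\la,\ga/p}[u])=\iint_{\Rset^{2N}}F_\delta(x,h)\dif h\dif x,\qquad F_\delta(x,h)\coloneqq\bbone_{(|h|^{\ga},\infty)}\Bigl(\Bigl|\tfrac{u(x+\delta h)-u(x)}{\delta|h|}\Bigr|^p\Bigr)\,|h|^{\ga-N}.
\]
It thus suffices to prove $\limsup_{\delta\searrow0}\iint F_\delta\le \ka(p,N)|\ga|^{-1}\|\nabla u\|_{L^p}^p=:J_\ga$, the last identity being the polar–coordinate computation already carried out in the proof of \cref{lem:liminfs} for $\iint\bbone_{(|h|^\ga,\infty)}(|\inn{h/|h|}{\nabla u(x)}|^p)|h|^{\ga-N}\dif h\dif x$. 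Since $u\in C^1$, the difference quotient $\delta^{-1}|h|^{-1}(u(x+\delta h)-u(x))$ tends to $\inn{h/|h|}{\nabla u(x)}$ as $\delta\searrow0$ for every $h\neq0$, and the elementary semicontinuity $\limsup_{\delta\searrow0}\bbone_{(a,\infty)}(s_\delta)\le\bbone_{[a,\infty)}(t)$ when $s_\delta\to t$ (used with $a=|h|^\ga>0$) gives the pointwise bound $\limsup_{\delta\searrow0}F_\delta(x,h)\le\bbone_{[|h|^\ga,\infty)}(|\inn{h/|h|}{\nabla u(x)}|^p)|h|^{\ga-N}$; since $\{r>0:r^\ga=|\inn{\theta}{\nabla u(x)}|^p\}$ is either empty or a single point, this last expression has the same $(x,h)$-integral as the one defining $J_\ga$. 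The whole task is to upgrade $\iint\limsup_\delta F_\delta\le J_\ga$ to $\limsup_\delta\iint F_\delta\le J_\ga$, i.e.\ to produce $\delta$-uniform integrable majorants for $F_\delta$.

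When $\ga>0$ — this handles (i) and, word for word, (iii) — the majorant is immediate. Write $L\coloneqq\|\nabla u\|_\infty$ and $\supp\nabla u\subseteq B(0,R)$. The bound $|u(x+\delta h)-u(x)|\le L\delta|h|$ forces $F_\delta$ to vanish unless $|h|<L^{p/\ga}$, while $F_\delta(x,h)\neq0$ requires $\nabla u\not\equiv0$ on the segment $[x,x+\delta h]$, hence $x\in B(0,R+L^{p/\ga})$ once $\delta\le1$. So $F_\delta(x,h)\le|h|^{\ga-N}\bbone_{\{|h|<L^{p/\ga}\}}\bbone_{B(0,R+L^{p/\ga})}(x)$ for all $\delta\le1$, and this majorant is integrable on $\Rset^{2N}$ precisely because $\ga>0$; reverse Fatou then gives $\limsup_{\delta\searrow0}\iint F_\delta\le\iint\limsup_\delta F_\delta\le J_\ga$.

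When $\ga<0$ (case (ii), $u\in C^1_c$) this breaks down: $F_\delta$ now lives on $\{|h|>L^{p/\ga}\}$ and the relevant $x$-set $\supp u\cup(\supp u-\delta h)$ has measure growing linearly in $|h|$, so the naive majorant is not integrable when $\ga\in[-1,0)$. I would split $\iint F_\delta=\iint_{x\in\supp u}F_\delta+\iint_{x\notin\supp u}F_\delta$. On $\{x\in\supp u\}$ the $\delta$-uniform majorant $|h|^{\ga-N}\bbone_{\{|h|>L^{p/\ga}\}}\bbone_{\supp u}(x)$ is integrable (bounded $x$-set, $\ga<0$), and reverse Fatou together with the pointwise bound yields $\limsup_\delta\iint_{x\in\supp u}F_\delta\le\iint\bbone_{\supp u}(x)\bbone_{[|h|^\ga,\infty)}(|\inn{h/|h|}{\nabla u(x)}|^p)|h|^{\ga-N}=J_\ga$, the factor $\bbone_{\supp u}(x)$ being redundant since the integrand already vanishes off $\{\nabla u\neq0\}$. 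On $\{x\notin\supp u\}$, where $u(x)=0$, I change variables $y=x+\delta h$ (Jacobian $1$ for fixed $h$), turning the integrand into $\bbone_{(|h|^\ga,\infty)}\bigl(|\delta^{-1}|h|^{-1}(u(y)-u(y-\delta h))|^p\bigr)|h|^{\ga-N}\bbone_{\{y-\delta h\notin\supp u\}}$, which vanishes unless $y\in\{u\neq0\}$ and hence is $\delta$-uniformly dominated by the integrable function $|h|^{\ga-N}\bbone_{\{|h|>L^{p/\ga}\}}\bbone_{\supp u}(y)$. A last reverse Fatou reduces matters to showing $\limsup_\delta$ of this integrand is $0$ for a.e.\ $(y,h)$: if $y\in\operatorname{int}\supp u$ then $y-\delta h\in\supp u$ for small $\delta$ and the last indicator dies; if $y\notin\supp u$ then $\nabla u(y)=0$; and if $y\in\partial\supp u$ then $\nabla u(y)=0$ as well — otherwise the implicit function theorem would make $\{u\neq0\}$ dense near $y$, contradicting $y\in\partial\supp u$ — so in the last two cases $\bbone_{[|h|^\ga,\infty)}(|\inn{h/|h|}{\nabla u(y)}|^p)=\bbone_{[|h|^\ga,\infty)}(0)=0$ because $|h|^\ga>0$. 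Hence this second contribution tends to $0$, and adding the two pieces gives $\limsup_{\delta\searrow0}\iint F_\delta\le J_\ga$.

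The main obstacle is the case $\ga<0$: for $\ga\in[-1,0)$ there is simply no $\delta$-uniform integrable majorant for $F_\delta$ on all of $\Rset^{2N}$, and circumventing this forces the translation $y=x+\delta h$ (to confine the far part of the domain to the bounded set $\supp u$) together with the elementary but crucial fact that a $C^1$ function has vanishing gradient on the topological boundary of its support.
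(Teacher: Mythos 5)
Your proof is correct, and it takes a genuinely different route from the paper's. You keep working with the rescaled representation \eqref{eq:superlevelset} — which the paper uses only for the lower bound in \cref{lem:liminfs} — and upgrade Fatou to reverse Fatou by exhibiting $\delta$-uniform integrable majorants: for $\gamma>0$ the Lipschitz bound confines the integrand to a bounded $x$-set times $\{|h|<L^{p/\gamma}\}$ (and only compact support of $\nabla u$ is used, so (iii) comes for free); for $\gamma<0$ you split according to whether $x\in\supp u$, translate the exterior piece by $y=x+\delta h$ so that it too lives over a bounded set, and kill its pointwise limsup using that a $C^1$ function has vanishing gradient on $\partial(\supp u)$ (for which continuity of $\nabla u$ plus its vanishing on the complement of $\supp u$, which accumulates at every boundary point, is even more elementary than the implicit function theorem you invoke). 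The paper instead abandons the rescaling here and argues in polar coordinates: from $\lambda r^{\gamma/p}\le|\nabla u(x)\cdot\omega|+\rho(r)$, with $\rho$ the modulus of continuity of $\nabla u$, it reads off an explicit admissible radial range for each $(x,\omega)$, computes the radial integral exactly, and passes to the limit (sending an auxiliary $\varepsilon\to 0$ at the end when $\gamma<0$); the exterior region for $\gamma<0$ is handled more crudely by noting that $\nu_\gamma(E_{\la,\ga/p}[u]\cap((\R^N\setminus\widetilde B)\times\R^N))$ is bounded by a $\lambda$-independent finite constant, so $\lambda^p$ times it tends to $0$. Your argument is softer and makes the limsup proof symmetric with the liminf proof; the paper's is more quantitative and its disposal of the exterior region is shorter. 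Both are complete proofs of (i)--(iii).
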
 

\begin{remark}\label{remark3.4}
The subtlety in part (iii) above is only relevant in dimension $N = 1$, since if $N \ge 2$, then any function in $C^1(\R^N)$ with a compactly supported gradient is constant outside a compact set.
\end{remark}

\begin{proof}[Proof of \cref{lem:limsupsN>1}]
We distinguish the cases $\gamma>0$ and $\gamma<0$.

\noindent{\it The case $\ga>0$.} We assume that $\nabla u$ is compactly supported. To prove part (iii) (and thus part (i)) assume
\Be\label{eq:L-A} \la\ge 
 L \defeq \Big\|\Big(\sum_{i=1}^N|\partial_iu|^2\Big)^{1/2} \Big\|_{L^\infty(\bbR^N)}.
\Ee
Then 
\begin{equation} \label{eq:cond2}
(x,y)\in E_{\la,\ga/p}[u] \implies \la|x-y|^{\ga/p}\le L \implies  |x-y|\le 1.
\end{equation}
Furthermore, if $(x,y) \in E_{\la,\ga/p}[u]$, then writing $y=x+r\om$ with $r > 0$ and $\om\in \bbS^{N-1}$, we have
\begin{equation} \label{eq:Taylor}
\begin{split} 
&\la r^{\ga/p}\le |\nabla u(x)\cdot\om|+\rho(r)
\quad \text{with } \rho(r) \defeq \sup_{x\in \R^N} \sup_{|h|\le r} |\nabla u(x+h)-\nabla u(x) |;
\end{split}
\end{equation} since $\nabla u$ is uniformly continuous on $\R^N$ we have $\rho(r)\searrow  0$  as $r\searrow 0$. This,  together with the first implication of \eqref{eq:cond2}, shows that 
\begin{equation} \label{eq:cond1}
\la r^{\gamma/p} \le |\nabla u(x)\cdot\om|+\rho (( \tfrac L\la)^{p/\ga}).
\end{equation}

Let $B$ be a ball centered at the origin containing $\supp(\nabla u)$, and let $\widetilde B$ the expanded ball with radius $1+\mathrm{rad}(B)$.  Then for $x \notin \widetilde B$, we have $Q_{\ga/p}u(x,y) = 0$ for every $y$ with $|x-y| \leq 1$, and \eqref{eq:cond2} shows $(x,y) \notin E_{\la,\ga/p}[u]$ for every $y$ with $|x-y| > 1$, so $E_{\la,\ga/p}[u] \subseteq \widetilde{B} \times \Rset^N$.
Define, for $x \in \tilde{B}$, $\omega \in \S^{N-1}$, and $\la > 0$ 
\[
\overline R(x,\omega,\la) :=
\left( \la^{-1} ( |\nabla u(x)\cdot\om|+\rho (( \tfrac L\la)^{p/\ga})
\right)^{p/\ga}
\]
Then by \eqref{eq:cond1}, 
\begin{align*} 
\la^p\nu_\ga (E_{\la,\ga/p}[u]) &\le \la^p \int_{\widetilde B}
\int_{\bbS^{N-1}} \int_0^{\overline R(x,\om,\la)}  r^{\ga-1} \dif r\dif \om\dif x
\\
&=\ga^{-1} 
\int_{\widetilde B} \int_{\bbS^{N-1}}
\Big( |\nabla u(x)\cdot\om|+\rho (( \tfrac L\la)^{p/\ga})
\Big)^{p}\dif \om\dif x.
\end{align*} 
Letting $\la\to \infty$ we get 
\[
\limsup_{\la\to\infty} \la^p\nu_\ga (E_{\la,\ga/p}[u]) \le \ga^{-1} \ka(p,N) \int_{\widetilde B}
 |\nabla u(x)|^p\dif x
\]
and hence the assertion.

\noindent{\it The case $\ga<0$.} 
 We first note that if $(x,y) \in E_{\lambda,\gamma/p}[u]$, then writing $y = x+r \omega$, we have again \eqref{eq:Taylor}.
 
 Now let $\eps>0$, and let $\delta(\eps)>0$ be such that $\rho(r)\le \eps$ for $0<r\le \delta(\eps) $.
Let  
\[  
r_{\la}(x,\om,\eps) = \min \Big\{ \delta(\eps), \Big(\frac{ \la}{|\nabla u(x)\cdot \omega|+\eps}\Big)^{\frac{p}{-\ga}}\Big\}.
\]
Note that $r_\la(x,\om,\eps)>0$ for $\la>0$. Also if $(x, x+r\omega)\in E_{\la,\gamma/p}[u]$  then $r\ge r_\la(x,\om,\eps)$; indeed, either $r_\la(x,\om,\eps) \ge \delta(\varepsilon)$ already, or else $r_\la(x,\om,\eps) < \delta(\varepsilon)$ in which case \eqref{eq:Taylor} shows $r_\la(x,\om,\eps) \ge \Big(\frac{ \la}{|\nabla u(x)\cdot \omega|+\eps}\Big)^{\frac{p}{-\ga}}$.

Finally let $B$ be any ball in $\R^N$  containing the support of $u$, and let $\widetilde B$ be the double ball. Then
\begin{align*} 
&\quad \limsup_{\lambda \searrow 0} \lambda^p 
\nu_\ga(E_{\lambda,\gamma/p}[u] \cap (\widetilde B \times \R^N)) 
 \leq \limsup_{\lambda \searrow 0} \lambda^p \int_{\widetilde B} \int_{\Sset^{N-1}} \int_{r_{\lambda}(x, \omega,\eps)}^{\infty} r^{\gamma-1} \dif r \dif \omega \dif x 
 \\  \notag
 &= \limsup_{\lambda \searrow 0} \lambda^p \int_{\widetilde B} \int_{\Sset^{N-1}} \frac{1}{|\gamma|} [r_{\lambda}(x, \omega,\eps)]^\gamma  \dif \omega \dif x 
 \\
 \notag &= \limsup_{\lambda \searrow 0} \frac{1}{|\gamma|}
 \int_{\widetilde B} \int_{\bbS^{N-1}} \max\{ \la^p\delta(\eps)^{\gamma}, (|\nabla u(x)\cdot\omega|+\eps)^p \} \dif \omega \dif x
 \\
 &= \frac{1}{|\gamma|} \int_{\widetilde B} \int_{\Sset^{N-1}} (|\nabla u(x) \cdot \omega|+\eps)^p \dif \omega \dif x.
 \end{align*} 
 Since $\eps>0$ was arbitrary we obtain 
 \Be \label{eq:insideB}
 \limsup_{\lambda \searrow 0} \lambda^p 
\nu_\ga\bigl(E_{\lambda,\gamma/p}[u] \cap (\widetilde B \times \R^N)\bigr) \le 
  \frac{1}{|\gamma|} \ka(p,N) \norm{\nabla u}_{L^p(\R^N)}^p.
\Ee
Since $u = 0$ in $\bbR^N\setminus B$, if $(x,y) \in E_{\lambda,\gamma/p}[u] \cap ((\R^N \setminus \widetilde B) \times \R^N)$ then $y \in B$. Therefore 
\begin{align*}
&\limsup_{\lambda \searrow 0} \lambda^p \nu_\ga\bigl(E_{\lambda,\gamma/p}[u] \cap ((\R^N \setminus \widetilde B) \times \R^N)\bigr)   \leq \limsup_{\lambda \searrow 0} \lambda^p \int_{B} \int_{\R^N \setminus \widetilde B} |x-y|^{\gamma-N} \dif x \dif y
 = 0.
\end{align*}
This finishes the proof of part (ii).
\end{proof}

In dimension $N = 1$, when $\ga < -1$, one can also weaken the hypothesis $u \in C^1_c(\R)$ in  \cref{lem:limsupsN>1} to $u \in C^1(\R)$ and $u'$ is compactly supported:

\begin{lemma} \label{lem:limsupsN=1}
Suppose $u\in C^1(\bbR)$, $u'$ is compactly supported, and $1\le p<\infty$. If $\gamma<-1$ then
\[ 
\limsup_{\la\searrow 0} \la^p \nu_\gamma(E_{\la,\ga/p}[u]) \le \frac{\ka(p,N)}{|\ga|} \|u'\|_{L^p(\bbR)}^p.
\]
\end{lemma}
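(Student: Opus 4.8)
The plan is to follow the proof of part~(ii) of \cref{lem:limsupsN>1} (the case $\ga<0$), isolating the single step in which the compact support of $u$ --- rather than that of $u'=\nabla u$ --- was used. Fix an interval $[-R,R]\supseteq\supp u'$; then $u$ equals a constant $c_+$ on $[R,+\infty)$ and a constant $c_-$ on $(-\infty,-R]$. If $c_+=c_-$, then $u$ minus this constant lies in $C^1_c(\bbR)$ and \cref{lem:limsupsN>1}\,(ii) already applies, so the new content is the case $c_+\neq c_-$, where $u$ is bounded but not compactly supported. I would split
\[
E_{\la,\ga/p}[u]=A_\la\cup F_\la,\qquad A_\la:=E_{\la,\ga/p}[u]\cap\bigl([-2R,2R]\times\bbR\bigr),\quad F_\la:=E_{\la,\ga/p}[u]\setminus A_\la,
\]
and estimate $\la^p\nu_\ga$ of the two pieces as $\la\searrow0$.

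For the ``near'' piece $A_\la$ I would reuse the derivation of \eqref{eq:insideB} unchanged: that argument uses nothing about $u$ beyond the uniform continuity of $u'=\nabla u$ on $\bbR$ (which holds since $u'\in C_c(\bbR)$) in order to obtain the Taylor-type bound \eqref{eq:Taylor} and, for each $\eps>0$, the lower bound $r\ge r_\la(x,\om,\eps)$ whenever $(x,x+r\om)\in E_{\la,\ga/p}[u]$; one then integrates $\int_{r_\la(x,\om,\eps)}^{\infty}r^{\ga-1}\dif r$ with the first variable ranging over the fixed interval $[-2R,2R]$. This gives, for every $\eps>0$,
\[
\limsup_{\la\searrow0}\la^p\nu_\ga(A_\la)\le\frac1{|\ga|}\int_{-2R}^{2R}\int_{\S^{0}}\bigl(|u'(x)|+\eps\bigr)^p\dif\om\dif x=\frac{2}{|\ga|}\int_{-2R}^{2R}\bigl(|u'(x)|+\eps\bigr)^p\dif x.
\]

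For the ``far'' piece $F_\la$ the key observation is that it is contained in a single set of finite $\nu_\ga$-measure, independent of $\la$. Indeed, if $(x,y)\in F_\la$ and $x>2R$, then $u(x)=c_+$; since $(x,y)\in E_{\la,\ga/p}[u]$ forces $u(y)\neq u(x)$ while $u\equiv c_+$ on $[R,+\infty)$, necessarily $y<R$, and the case $x<-2R$ is symmetric. Hence $F_\la\subseteq F:=\{(x,y):x>2R,\ y<R\}\cup\{(x,y):x<-2R,\ y>-R\}$, and on each of these two sets $|x-y|>R$, so, by symmetry,
\[
\nu_\ga(F)=2\int_{2R}^{\infty}\int_{-\infty}^{R}|x-y|^{\ga-1}\dif y\dif x=\frac{2}{|\ga|}\int_{2R}^{\infty}(x-R)^{\ga}\dif x=\frac{2R^{\ga+1}}{|\ga|\,|\ga+1|}<\infty,
\]
where both integrations converge precisely because $\ga<-1$. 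Consequently $\la^p\nu_\ga(F_\la)\le\la^p\nu_\ga(F)\to0$ as $\la\searrow0$. Adding this to the bound for $A_\la$, then letting $\eps\searrow0$ and using $\supp u'\subseteq[-R,R]$ together with $\ka(p,1)=\int_{\S^{0}}|e\cdot\om|^p\dif\om=2$, I obtain $\limsup_{\la\searrow0}\la^p\nu_\ga(E_{\la,\ga/p}[u])\le\tfrac{\ka(p,1)}{|\ga|}\|u'\|_{L^p(\bbR)}^p$, as claimed.

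The only genuine obstacle relative to \cref{lem:limsupsN>1} is this far contribution: for compactly supported $u$ there are no admissible pairs $(x,y)$ with both $|x|$ and $|y|$ large, whereas now such pairs exist but form a set of finite $\nu_\ga$-measure, and it is exactly the hypothesis $\ga<-1$ (not merely $\ga<0$) that makes this measure finite --- which is also why the statement is restricted to $\ga<-1$. By \cref{remark3.4} the whole difficulty is absent when $N\ge2$, which is why there is no higher-dimensional analogue of this lemma.
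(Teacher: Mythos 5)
Your proposal is correct and follows essentially the same route as the paper: the part of $E_{\la,\ga/p}[u]$ with first coordinate in a fixed bounded interval is handled by the derivation of \eqref{eq:insideB} (which only needs uniform continuity of $u'$), and the remaining pairs are shown to lie in a fixed set of finite $\nu_\ga$-measure precisely because $\ga<-1$, so their contribution is $O(\la^p)\to 0$. The explicit evaluation of $\nu_\ga(F)$ and the reduction to \cref{lem:limsupsN>1}(ii) when $c_+=c_-$ are harmless extras.
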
 

\begin{proof}
Let $\supp(u')\subset B\coloneqq(-\beta,\beta)$.
By \eqref{eq:insideB} we have 
\[
\limsup_{\la\searrow 0} \nu_\ga(E_{\lambda,\gamma/p}[u] \cap (-2\beta,2\beta)\times \bbR) 
\leq \frac{1}{|\ga|}\ka(p,1) \|u'\|_{L^p(\R)}^p.
\]
Moreover, since $u$ is constant on $(\beta,\infty)$ and constant on $(-\infty,-\beta)$, if $(x,y) \in E_{\la,\ga/p}[u]$ and $x<-2\beta$ then $y> -\beta$, and if $(x,y) \in E_{\la,\ga/p}[u]$ and $x>2\beta$ then $y< \beta$. Since $\gamma<-1$, 
\begin{align*}
&\nu_\ga(E_{\lambda,\gamma/p}[u] \cap (\bbR\setminus(-2\beta,2\beta))\times \bbR)  
\\&\le \int^\infty_{2\beta} \int_{-\infty}^\beta (x-y)^{\gamma-1} \dif y\dif x 
+\int_{-\infty}^{-2\beta}\int_{-\beta}^\infty (y-x)^{\gamma-1} \dif y \dif x  <\infty. 
\end{align*}
We conclude
\[
\limsup_{\la \searrow 0}  \la^p\nu_\ga(E_{\lambda,\gamma/p}[u] \cap (\bbR\setminus(-2\beta,2\beta))\times \bbR)  =0. \qedhere
\]
\end{proof}

\subsection{\it Upper bounds for $\limsup \la^p\nu_\ga(E_{\la,\ga/p}[u])$, for general $\dot{W}^{1,p}$ functions}\label{sec:proofoflimitsW11}

Let $N \ge 1$, $1 \leq p < \infty$ and  $u \in \dot{W}^{1,p}(\R^N)$. In light of \cref{lem:liminfs}, to prove the limiting relations \eqref{eq:lim_weighted} and \eqref{eq:lim_weighted_neg} in \cref{theorem_weighted_reverse}, we need only show that
\begin{equation} \label{eq:upperbdd1}
\limsup_{\la \to \infty} \la^p\nu_\ga(E_{\la,\ga/p}[u]) \leq \frac{\kappa(p,N)}{|\ga|} \|\nabla u\|_{L^p(\R^N)}^p 
\end{equation}
if $\ga > 0$ and
\begin{equation} \label{eq:upperbdd2}
\limsup_{\la \searrow 0} \la^p\nu_\ga(E_{\la,\ga/p}[u]) \leq \frac{\kappa(p,N)}{|\ga|} \|\nabla u\|_{L^p(\R^N)}^p
\end{equation}
if $\ga < 0$ and $p > 1$, or $\ga < -1$ and $p = 1$.
\cref{lem:limsupsN>1}(i)(ii) asserts that these desired inequalities hold for functions in $C^1_c(\R^N)$. When $N \ge 2$ or $p > 1$, a general $\dot{W}^{1,p}(\R^N)$ function can be approximated in $\dot{W}^{1,p}(\R^N)$ by functions in $C^1_c(\R^N)$: by \cite{hajlasz-kalamajska}, there exists a sequence $\{u_n\}$ in $C^{\infty}_c(\R^N)$ such that $\lim_{n\to\infty} \|\nabla (u_n-u)\|_{L^p(\R^N)}=0.$ If further $\ga > 0$, or $\ga < 0$ and $p > 1$, or $\ga < -1$ and $p = 1$, then by parts (i) of  \cref{theorem_weighted_pgr1,theorem_weighted_pequal1} (proved in \cref{sec:Sobolevbounds}), we have
\begin{equation} \label{eq:upperbddsup}
\sup_{\lambda>0} \lambda^p \nu_\gamma (E_{\lambda, \gamma/p}[u_n-u]) \le C^p_{N, p, \gamma} \norm{\nabla (u_n-u)}_{L^p(\Rset^N)}^p.
\end{equation}
It follows that for every $n$ and every $\delta \in (0,1)$,
\begin{equation} \label{eq:argument}
\begin{split}
\limsup_{\la \to \infty} \lambda^p \nu_\gamma (E_{\lambda, \gamma/p}[u]) 
&\leq \limsup_{\la \to \infty} \lambda^p \nu_\gamma (E_{(1-\delta)\lambda, \gamma/p}[u_n]) + \sup_{\lambda>0} \lambda^p \nu_\gamma (E_{\delta \lambda, \gamma/p}[u_n-u]) \\
&\leq \frac{\kappa(p,N)}{|\ga|(1-\delta)} \|\nabla u_n\|_{L^p(\R^N)}^p + \frac{C_{N,p,\ga}^p \|\nabla (u_n-u)\|_{L^p(\R^N)}^p}{\delta^p}
\end{split}
\end{equation}
if $\ga > 0$, and a similar inequality holds with $\limsup_{\la \to \infty}$ replaced by $\limsup_{\la \searrow 0}$ if $\ga < 0$, $p > 1$ or $\ga < -1$, $p=1$. Letting first $n \to \infty$ and then $\delta \to 0$, we get the desired conclusions \eqref{eq:upperbdd1} and \eqref{eq:upperbdd2} under the corresponding conditions on $\ga$ and $p$.

It remains to tackle the case $N = p = 1$, in which case we only need to prove \eqref{eq:upperbdd1} when $\ga > 0$ and \eqref{eq:upperbdd2} when $\ga < -1$. Using \eqref{eq:convergence}, we approximate $u$ by finding a sequence $\{u_n\}$ in $C^{\infty}(\R)$ so that $u_n'$ are compactly supported for each $n$, and $\lim_{n \to \infty} \|u_n'-u'\|_{L^1(\R)}  = 0$. Since the desired inequalities hold for $u_n$ in place of $u$ by  \cref{lem:limsupsN>1}(iii) and \cref{lem:limsupsN=1}, and since part (i) of \cref{theorem_weighted_pequal1} applies to give \eqref{eq:upperbddsup} when $\ga > 0$ or $\ga < -1$, our earlier argument in \eqref{eq:argument} can be repeated to yield \eqref{eq:upperbdd1} when $\ga > 0$ and \eqref{eq:upperbdd2} when $\ga < -1$. This completes our proof of parts (a) and (b) of \cref{theorem_weighted_reverse}.

\subsection{Conclusion of the proof of \cref{theorem_weighted_reverse}}
 
In \cref{sec:proofoflimitsW11} we proved parts (a) and (b) of \cref{theorem_weighted_reverse}. The lower bound for the lim\,inf in part (c) has been established in \cref{lem:liminfs}(ii), and the limiting equality for $u \in C^1_c(\R^N)$ when $p = 1$ and $-1 \leq \ga <0$ follows by combining that with the upper bound for the lim\,sup in part (ii) of \cref{lem:limsupsN>1}.
The proof of the negative result in part (c) of the theorem (generic failure for $p=1$, $-1\le \ga<0$) will be given in \cref{proposition:generic} below. \qed

\subsection{On limit formulas for $\dot{BV}(\bbR)$-functions - The proof of \cref{prop:failureoflimitforBV}}
\label{sec:BV-limit}

When $p = 1$, Poliakovsky \cite{poliakovsky} asked whether \eqref{eq:lim_weighted} still holds for $u \in \dot{BV} (\Rset^N)$ instead of $\dot{W}^{1,1}(\Rset^N)$ if $\ga = N$. More generally, one may wonder whether it is possible that for all $u \in \dot{BV}(\R^N)$, one has
\begin{equation} \label{eq:lim_weighted_BV}
\lim_{\la \to \infty} \la \nu_{\ga}(E_{\la,\ga}[u]) = \frac{\ka(1,N)}{|\ga|} \|\nabla u\|_{\cM} \quad \text{when $\ga > 0$},
\end{equation}
and
\begin{equation} \label{eq:lim_weighted_neg_BV}
\lim_{\la \to 0^+} \la \nu_{\ga}(E_{\la,\ga}[u]) = \frac{\ka(1,N)}{|\ga|} \|\nabla u\|_{\cM} \quad \text{when $\ga < 0$}.
\end{equation}
We show that this is \underline{not} the case.

First, when $-1 \leq \ga < 0$, \cref{thm:counterexamples}(i) (proved in \cref{prop:wknolimits} below) shows that even if $u \in \dot{W}^{1,1}(\R^N)$, it may happen that $\lim_{\la \to 0^+} \la \nu_{\ga}(E_{\la,\ga}[u]) = \infty$. So \eqref{eq:lim_weighted_neg_BV} cannot hold for all $u \in \dot{BV}(\R^N)$ for such $\ga$.

The following lemma provides examples of failure of \eqref{eq:lim_weighted_BV} and \eqref{eq:lim_weighted_neg_BV} when $\ga \in \R \setminus [-1,0]$, since $|\ga+1| \ne |\ga|$ unless $\ga = -1/2$:
\begin{lemma}\label{lem:failureoflimitsBV}
Suppose $N \geq 1$ and $u = \bbone_{\Omega}$ where $\Omega$ is any bounded domain in $\R^N$ with smooth boundary. Then $u \in \dot{BV}(\R^N)$ and
\[
\lim_{\la \to \infty} \la \nu_{\ga}(E_{\la,\ga}[u]) = \frac{\ka(1,N)}{|\ga+1|} \|\nabla u\|_{\cM} \quad \text{for all $\ga > -1$},
\]
while
\[
\lim_{\la \to 0^+} \la \nu_{\ga}(E_{\la,\ga}[u]) = \frac{\ka(1,N)}{|\ga+1|} \|\nabla u\|_{\cM} \quad \text{for all $\ga < -1$}.
\]
\end{lemma}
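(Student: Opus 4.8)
\emph{Approach.} Since $u=\bbone_\Omega$ with $\Omega$ bounded and $\partial\Omega$ smooth, $u$ is the characteristic function of a set of finite perimeter, so $u\in\dot{BV}(\R^N)$ with $\|\nabla u\|_{\cM}=\cH^{N-1}(\partial\Omega)$. The first step is to identify $E_{\la,\ga}[u]$ explicitly. Since $|u(x)-u(y)|$ equals $1$ when exactly one of $x,y$ lies in $\Omega$ and equals $0$ otherwise, the condition $|Q_\ga u(x,y)|>\la$ is equivalent to: exactly one of $x,y$ lies in $\Omega$, and $|x-y|^{1+\ga}<\la^{-1}$. Putting $\rho\defeq\la^{-1/(1+\ga)}$, the latter condition reads $|x-y|<\rho$ if $\ga>-1$ and $|x-y|>\rho$ if $\ga<-1$; in the relevant regime ($\la\to\infty$ when $\ga>-1$, $\la\to0^+$ when $\ga<-1$) one has $\rho\to0^+$ and $\rho^{-(1+\ga)}=\la$. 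Using the symmetry of $\nu_\ga$ under $x\leftrightarrow y$ (which contributes a factor $2$) and the substitution $y=x+h$, this gives
\[
  \nu_\ga(E_{\la,\ga}[u])=2\int_{A_\rho}|h|^{\ga-N}\,V(h)\dif h,\qquad
  V(h)\defeq\cL^N\bigl(\{x\in\Omega:\ x+h\notin\Omega\}\bigr),
\]
where $A_\rho=\{|h|<\rho\}$ if $\ga>-1$ and $A_\rho=\{|h|>\rho\}$ if $\ga<-1$.

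The crucial input is a geometric lemma about $V$, and this is the step I expect to be the main obstacle: \emph{(a)} $V(h)\le C_\Omega\,|h|$ for every $h\in\R^N$; and \emph{(b)} for every $\omega\in\Sset^{N-1}$,
\[
  \lim_{t\to0^+}\frac{V(t\omega)}{t}=G(\omega)\defeq\int_{\partial\Omega}\bigl(\nu(x)\cdot\omega\bigr)_+\dif\cH^{N-1}(x),
\]
$\nu$ being the outer unit normal. For \emph{(a)}: if $x\in\Omega$ and $x+h\notin\Omega$ then the segment $[x,x+h]$ meets $\partial\Omega$, so the set lies in the $|h|$-neighbourhood of $\partial\Omega$, whose Lebesgue measure is $\le C_\Omega|h|$ by the tubular-neighbourhood estimate for a compact smooth hypersurface (and $\le\cL^N(\Omega)$ trivially, which absorbs large $|h|$). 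For \emph{(b)}: slice $\R^N$ by lines $x'+\R\omega$, $x'\in\omega^\perp$; by Sard's theorem a.e.\ such line is transversal to $\partial\Omega$, hence meets it in finitely many points, so $\Omega$ cuts the line in finitely many bounded open intervals, and for $t$ below the smallest gap the one-dimensional measure of the slice of $\{x\in\Omega:x+t\omega\notin\Omega\}$ equals $t$ times the number of "exit points'' of that line (boundary points where it leaves $\Omega$, i.e.\ where $\nu\cdot\omega>0$). Since this slice measure is $\le t\cdot\#\{\text{exit points}\}$ for all $t$, with $x'\mapsto\#\{\text{exit points}\}$ integrable over $\omega^\perp$, dominated convergence in $x'$ together with the area formula $\int_{\omega^\perp}\#\{z\in\partial\Omega\cap(x'+\R\omega):\nu(z)\cdot\omega>0\}\dif x'=\int_{\partial\Omega}(\nu\cdot\omega)_+\dif\cH^{N-1}$ yields \emph{(b)}. (Statement \emph{(b)} is the classical directional $L^1$-modulus-of-continuity formula for characteristic functions of finite-perimeter sets, and may alternatively be quoted from the $BV$ literature.)

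Granting \emph{(a)}--\emph{(b)}, the conclusion follows by rescaling and one dominated-convergence step. Passing to polar coordinates $h=r\omega$ and then rescaling $r=\rho\sigma$ converts the $r$-range into $\sigma\in I$, with $I=(0,1)$ if $\ga>-1$ and $I=(1,\infty)$ if $\ga<-1$, giving
\[
  \nu_\ga(E_{\la,\ga}[u])
  =2\rho^{\ga+1}\int_{\Sset^{N-1}}\int_{I}\sigma^{\ga}\,\frac{V(\rho\sigma\omega)}{\rho\sigma}\dif\sigma\dif\omega .
\]
By \emph{(a)} the integrand is dominated by $C_\Omega\,\sigma^{\ga}$, which is integrable over $\Sset^{N-1}\times I$ \emph{precisely} when $\ga>-1$ (for $I=(0,1)$) or $\ga<-1$ (for $I=(1,\infty)$); by \emph{(b)} it tends pointwise, as $\rho\to0^+$, to $\sigma^{\ga}G(\omega)$. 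Hence, using $\rho^{-(\ga+1)}=\la$,
\[
  \lim\ \la\,\nu_\ga(E_{\la,\ga}[u])
  =2\Bigl(\int_I\sigma^{\ga}\dif\sigma\Bigr)\Bigl(\int_{\Sset^{N-1}}G(\omega)\dif\omega\Bigr).
\]
One has $\int_I\sigma^{\ga}\dif\sigma=|\ga+1|^{-1}$ in both cases, and, by Fubini and the symmetry $\omega\mapsto-\omega$ (so that $\int_{\Sset^{N-1}}(e\cdot\omega)_+\dif\omega=\tfrac12\ka(1,N)$), $\int_{\Sset^{N-1}}G(\omega)\dif\omega=\tfrac12\ka(1,N)\,\cH^{N-1}(\partial\Omega)$. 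Since $\cH^{N-1}(\partial\Omega)=\|\nabla u\|_{\cM}$, the right-hand side equals $\ka(1,N)\,|\ga+1|^{-1}\|\nabla u\|_{\cM}$, which is the claimed limit in both ranges of $\ga$.

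\emph{Remark on difficulty.} Everything away from the geometric lemma is bookkeeping; within it the delicate point is the \emph{uniform} linear bound $V(h)\le C_\Omega|h|$ for \emph{all} $h$ (not just small $h$), which is exactly what allows the unbounded ray $I=(1,\infty)$ in the case $\ga<-1$ to be handled by a single dominated-convergence argument. Both \emph{(a)} and \emph{(b)} use the boundedness of $\Omega$ and the smoothness of $\partial\Omega$ essentially, but are standard.
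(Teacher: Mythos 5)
Your proof is correct, and it reorganizes the computation along a route that is genuinely different in its bookkeeping from the paper's, even though both ultimately rest on slicing by lines. The paper first computes the one-dimensional case \emph{exactly} (for $\bbone_{[0,\infty)}$ and then for finite unions of intervals, obtaining the limits \eqref{eq:1d_pos}--\eqref{eq:1d_neg} together with the uniform bound \eqref{eq:1d_bdd}), and then applies the method of rotations: $\la\nu_\ga(E_{\la,\ga}[u])=\tfrac12\int_{\Sset^{N-1}}\int_{\omega^\perp}\la\nu_\ga(E_{\la,\ga}[u_{\omega,x'}])\dif x'\dif\omega$, with dominated convergence in $(x',\omega)$ justified by \eqref{eq:1d_bdd} and the final identification via the Crofton-type identity \eqref{eq:Crofton}. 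You instead work directly in $\R^N$ with the set covariogram $V(h)=\cL^N(\{x\in\Omega: x+h\notin\Omega\})$, so that $\nu_\ga(E_{\la,\ga}[u])=2\int_{A_\rho}|h|^{\ga-N}V(h)\dif h$, and the two inputs become the uniform linear bound $V(h)\le C_\Omega|h|$ (from the tubular neighbourhood of $\partial\Omega$) and the classical directional derivative $V(t\omega)/t\to\int_{\partial\Omega}(\nu\cdot\omega)_+\dif\cH^{N-1}$. Your proof of the latter is, in disguise, the paper's one-dimensional computation integrated over lines, but your dominating function is geometric (the tube estimate) rather than analytic (the 1D weak-type bound applied line by line); as you note, this single estimate cleanly handles the unbounded range $\sigma\in(1,\infty)$ when $\ga<-1$. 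What the paper's organization buys in exchange is the exact one-dimensional formulas \eqref{eq:calc}--\eqref{eq:1d_bdd}, which are stated separately and are of independent use. All the individual steps in your argument check out: the identification of $E_{\la,\ga}[u]$, the factor $2$ from symmetry, the rescaling $r=\rho\sigma$ with $\rho^{-(\ga+1)}=\la$, the integrability of $\sigma^\ga$ on $(0,1)$ resp.\ $(1,\infty)$ exactly in the stated ranges of $\ga$, and the spherical average $\int_{\Sset^{N-1}}(e\cdot\omega)_+\dif\omega=\tfrac12\ka(1,N)$.
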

\begin{proof}
First consider the case $N = 1$. If $u = \bbone_{[0,\infty)}$, then for  every $\gamma \in \R \setminus \{-1\}$ and $\lambda > 0$, one has
\begin{equation} \label{eq:calc}
\nu_{\gamma} (E_{\la,\ga}[u]) 
= 2 \nu_{\gamma}(\{(x,y) \in \R \colon x \geq 0, y < 0, |x-y|^{-(\gamma+1)} \geq \lambda \})
= \frac{2}{|\gamma+1|} \frac{1}{\lambda},
\end{equation}
which follows from a change of variables $s = x-y$, $t = x+y$: when $\gamma > -1$, one has
\[
\nu_{\gamma} (E_{\la,\ga}[u]) = \int_0^{\lambda^{-\frac{1}{\gamma+1}}} \int_{-s}^s \dif t \,s^{\gamma-1}  \dif s = 2 \int_0^{\lambda^{-\frac{1}{\gamma+1}}} s^{\gamma} \dif s = \frac{2}{\gamma+1} \frac{1}{\lambda}
\]
while when $\gamma < -1$, one has
\[
\nu_{\gamma} (E_{\la,\ga}[u]) = \int_{\lambda^{-\frac{1}{\gamma+1}}}^{\infty} \int_{-s}^s  \dif t \,s^{\gamma-1} \dif s = 2  \int_{\lambda^{-\frac{1}{\gamma+1}}}^{\infty} s^{\gamma} \dif s = \frac{2}{|\gamma+1|}\frac{1}{\lambda}. 
\] 
A similar calculation shows that if $u = \bbone_{I_1} + \dots + \bbone_{I_j}$ is a sum of characteristic functions of finitely many open intervals whose closures are pairwise disjoint, then 
\begin{equation} \label{eq:1d_pos}
\lim_{\la \to \infty} \la \nu_{\ga}(E_{\la,\ga}[u]) = \frac{2}{|\ga+1|} \|u'\|_{\cM(\R)} \quad \text{for all $\ga > -1$},
\end{equation}
while
\begin{equation} \label{eq:1d_neg}
\lim_{\la \to 0^+} \la \nu_{\ga}(E_{\la,\ga}[u]) = \frac{2}{|\ga+1|} \|u'\|_{\cM(\R)} \quad \text{for all $\ga < -1$};
\end{equation}
we also have
\begin{equation} \label{eq:1d_bdd}
\sup_{\la > 0} \la \nu_{\ga}(E_{\la,\ga}[u]) \leq \frac{2}{|\ga+1|} \|u'\|_{\cM(\R)} \quad \text{for all $\ga \in \R \setminus\{-1\}$}.
\end{equation}

Now consider the case $N \ge 2$. Let $\Omega$ be a bounded domain in $\R^N$ with smooth boundary and $u = \bbone_{\Omega}$. Then $u \in \dot{BV}(\R^N)$ with $\|\nabla u\|_{\cM} = \cL^{N-1}(\partial \Omega)$. The method of rotation shows 
\[
\la \nu_{\gamma} (E_{\la,\ga}[u]) 
= \frac{1}{2} \int_{\Sset^{N-1}} \int_{\omega^{\perp}} \la \nu_{\gamma} (E_{\lambda, \ga}[u_{\omega, x'}]) \dif x' \dif \omega
\]
where $u_{\omega,x'}(t) \defeq u(x'+t\omega)$ for $\omega \in \S^{N-1}$ and $x' \in \omega^{\perp}$. \eqref{eq:1d_pos}, \eqref{eq:1d_bdd} and the dominated convergence theorem allows one to show that
\[
\lim_{\la \to \infty} \la \nu_{\ga}(E_{\la,\ga}[u]) = \frac{1}{|\ga+1|} \int_{\S^{N-1}} \int_{\omega^{\perp}} \|u_{\omega,x'}'\|_{\cM(\R)} \dif x' \dif \omega \quad \text{for all $\ga > -1$},
\]
and using \eqref{eq:1d_neg} in place of \eqref{eq:1d_pos} we obtain the same conclusion with $\lim_{\la \to \infty}$ replaced by $\lim_{\la \to 0^+}$ if $\ga < -1$. It remains to observe that
\begin{equation} \label{eq:Crofton}
\int_{\S^{N-1}} \int_{\omega^{\perp}} \|u_{\omega,x'}'\|_{\cM(\R)} \dif x' \dif \omega = \ka(1,N) \|\nabla u\|_{\cM};
\end{equation}
this equality holds by Fubini's theorem if $u = \bbone_{\Omega}$ is replaced by $u_{\varepsilon} := u * \rho_{\varepsilon}$ where $\rho_{\varepsilon}$ is a suitable family of mollifiers, because the left hand side is then just
\[
\int_{\S^{N-1}} \int_{\omega^{\perp}} \int_{\R} \Big|\frac{d}{dt} u_{\varepsilon}(x'+t \omega)\Big| \dif t \dif x' \dif \omega = \int_{\S^{N-1}} \int_{\R^N} |\omega \cdot \nabla u_{\varepsilon}(x)| \dif x \dif \omega
\] 
which equals $\ka(1,N) \|\nabla u_{\varepsilon}\|_{L^1(\R^N)}$. One then just need to let $\varepsilon \to 0$ to obtain \eqref{eq:Crofton}. (See also the integral-geometric formula for the surface measure in \cite{Federer}*{Chapters 2.10.15, 3.2.13 \& 3.2.26}, which extends the classical Crofton formula.)
\end{proof}

\section{From weak type bounds on quotients to $\dot W^{1,p}$ and $\dot {BV}$}\label{sec:backtoSobolev}

In this section we complete the proofs of \cref{theorem_weighted_pgr1,theorem_weighted_pequal1}  proving part (ii) of these theorems. The key tool is the BBM formula, proved in \cite{Bourgain_Brezis_Mironescu_2001} (see also \cite{Davila} for additional information for the $BV$ case). The formula is quite flexible, involving a bounded smooth domain $\Omega$ and a sequence of non-negative radial mollifiers $\rho_n(|x|)$ with $\int_0^1 \rho_n(r) r^{N-1} dr = 1$ and $\lim_{n \to \infty} \int_{\delta}^{\infty} \rho_n(r) r^{N-1} dr = 0$ for every $\delta > 0$; we will apply it in the case when $\Omega = B_R$, the ball of radius $R$ centered at $0$, and $\rho_n(r) = s_n p (2R)^{-s_n p} |r|^{-N+s_n p} \bbone_{[0,2R]}(r)$ where $\{s_n\}$ is a sequence of positive numbers tending to $0$. As a result, we conclude that if $R > 0$, $1 \leq p < \infty$, $u \in L^p(B_R)$ and 
\[
\liminf_{s \to 0^+} s \iint_{B_R \times B_R} \frac{|u(x)-u(y)|^p}{|x-y|^{N+p-s p}} \dif x \dif y < \infty,
\]
then for $p = 1$ we have $u \in \dot{BV}(B_R)$ with $\|\nabla u\|_{\mathcal{M}(B_R)}$ being bounded by $\kappa(1,N)$ times the above liminf, and for $1 < p < \infty$ we have $u \in \dot{W}^{1,p}(B_R)$ and $\|\nabla u\|_{L^p(B_R)}$ being bounded by $\kappa(p,N)/p$ times the above liminf. 

Suppose now $N \geq 1$, $1 \leq p < \infty$, $\gamma \in \R$, $u \in L^1_{\loc}(\R^N)$ and $Q_{\gamma/p} u \in L^{p,\infty}(\R^{2N},\nu_{\gamma})$. Let 
\begin{equation} \label{eq:BBMcond}
A\coloneqq \sup_{R > 0} \liminf_{s \to 0^+} s \iint_{B_R \times B_R} \frac{|u(x)-u(y)|^p}{|x-y|^{N+p-sp}} \dif x \dif y.
\end{equation}
Suppose $A$ is finite. If $p = 1$, then the BBM formula above implies $u \in \dot{BV}(B_R)$ for every $R > 0$, with $\|\nabla u\|_{\mathcal{M}(B_R)} \leq C_N A$ where $C_N$ is a dimensional constant independent of $R$; as a result, $u \in \dot{BV}(\R^N)$ (with $\|\nabla u\|_{\mathcal{M}(\R^N)} \leq C_N A$). Similarly, if $1 < p < \infty$, the BBM formula implies $u \in \dot{W}^{1,p}(\R^N)$, with $\|\nabla u\|_{L^p(\R^N)} \leq C_{N,p} A^{1/p}$; this follows from our above formulation of BBM if $u$ is additionally $L^p_{\loc}(\R^N)$, but if not, for any given $R > 0$, one can always extend $u$ by zero outside $B_R$ and denoting $u_n$ its convolution with $n \phi(n x)$ where $\phi \in C^{\infty}_c(B_1)$ has integral 1. In that case we have $u_n \in L^p(B_R)$, so the above formulation of BBM applies, and $\|\nabla u_n\|_{L^p(B_R)}$ is bounded uniformly independent of $n$ (because Minkowski's inequality implies 
\[ 
\iint_{B_R \times B_R} \frac{|u_n(x)-u_n(y)|^p}{|x-y|^{N+p-s p}} \dif x \dif y \leq \iint_{B_{R+1} \times B_{R+1}} \frac{|u(x)-u(y)|^p}{|x-y|^{N+p-s p}} \dif x \dif y 
\] 
for every $n$). Thus a subsequence of $\{\nabla u_n\}$ converges weakly in $L^p(B_R)$ to a distributional gradient $\nabla u$ of $u$ on $B_R$, and a desired bound on $\|\nabla u\|_{L^p(B_R)}$ follows for every $R>0$.

So it remains to prove that $A < \infty$. By considering truncations of $u$ we may assume additionally that $u \in L^{\infty}(\R^N)$; the reduction is based  on  the pointwise bound
\[
Q_{ \ga /p}u_n(x,y) \le Q_{\ga/ p}u(x,y)  \text{ where } u_n(x)= 
\begin{cases}
u(x) &\text{ if } |u(x)|<n,\\
n \tfrac{u(x)}{|u(x)|}&\text{ if } |u(x)|\ge n.
\end{cases} 
\] 
Using the definition of weak derivative we see by a limiting argument that the conclusion $\sup_n\|\nabla u_n\|_p\le C$ implies $\|\nabla u\|_p\le C$ if $p>1$ and $\sup_n\|\nabla u_n\|_{\cM} \le C$ implies $\|\nabla u\|_\cM\le C$.

In order to establish our estimate for bounded functions we will use Lorentz duality in the following form: if $F$, $G$ are measurable functions on $\R^{2N}$, then for any $1 < q < \infty$, we have
\begin{equation} \label{eq:Lor_dual}
\iint_{\R^N \times \R^N} F(x,y) G(x,y) \dif \nu_{\gamma} \leq q' [F]_{L^{q,\infty}(\R^{2N},\nu_{\gamma})} [G]_{L^{q',1}(\R^{2N},\nu_{\gamma})}
\end{equation}
where 
\[
[F]_{L^{q,\infty}(\R^{2N},\nu_{\gamma})} \coloneqq \sup_{\lambda > 0} \lambda \nu_{\gamma}(\{|F| > \lambda\})^{1/q} = \sup_{t > 0} t^{1/q} F^*(t),
\]
and 
\[
[G]_{L^{q',1}(\R^{2N},\nu_{\gamma})} \coloneqq \int_0^\infty \nu_{\gamma}( \{|G| > \lambda\})^{1/q'} \dif\lambda = \frac{1}{q'} \int_0^{\infty} t^{1/q'} G^{*}(t) \frac{\dif t}{t};
\]
here $F^*(t) \coloneqq \inf\{s > 0 \colon \nu_{\gamma}(\{|F| > \lambda\}) \leq s\}$ is the non-increasing rearrangement of $F$, and similarly for $G^{*}(t)$ (see \cite{Hunt_1966,stein-weiss}). Indeed, \eqref{eq:Lor_dual} follows by noticing that 
\[
\iint_{\R^N \times \R^N} F(x,y) G(x,y) \dif \nu_{\gamma} \leq \int_0^{\infty} F^*(t) G^*(t) \dif t = \int_0^{\infty} [t^{1/q} F^*(t)] [t^{1/q'} G^*(t)] \frac{\dif t}{t} 
\]
which is clearly $\leq q' [F]_{L^{q,\infty}(\R^{2N},\nu_{\gamma})} [G]_{L^{q',1}(\R^{2N},\nu_{\gamma})}$. 

First we consider the case $\gamma > 0$. For sufficiently small $s > 0$, define
\[
\theta \coloneqq \frac{s}{1+\frac{\gamma}{p}}
\]
so that $\theta \in (0,1)$ and $p-sp = p(1-\theta)(1+\frac{\gamma}{p})-\gamma$. Then for every $R > 0$,
\begin{align*}
&\iint_{B_R \times B_R} \frac{|u(x)-u(y)|^p}{|x-y|^{N+p-sp}} \dif x\dif y \\
=& \iint_{\R^N \times \R^N} \left( Q_{\gamma/p}u(x,y) \right)^{p(1-\theta)} \left( |u(x)-u(y)| \bbone_{B_R \times B_R}(x,y) \right)^{p\theta} \dif \nu_{\gamma} \\
\leq & \frac{1}{\theta} \left [\left( Q_{\gamma/p}u \right)^{p(1-\theta)} \right]_{L^{\frac{1}{1-\theta},\infty}(\R^{2N},\nu_{\gamma})} \left [ |u(x)-u(y)|^{p\theta} \right]_{L^{\frac{1}{\theta},1}(B_R \times B_R,\nu_{\gamma})}
\end{align*}
by \eqref{eq:Lor_dual}. But 
\[
\left [\left( Q_{\gamma/p}u \right)^{p(1-\theta)} \right]_{L^{\frac{1}{1-\theta},\infty}(\R^{2N},\nu_{\gamma})} 
= \left [Q_{\gamma/p}u\right]_{L^{p,\infty}(\R^{2N},\nu_{\gamma})}^{p(1-\theta)}
\]
and
\begin{align*}
\left [ |u(x)-u(y)|^{p\theta} \right]_{L^{\frac{1}{\theta},1}(B_R \times B_R,\nu_{\gamma})} &\leq (2\|u\|_{L^{\infty}(\R^N)})^{p\theta} [\bbone_{B_R \times B_R}]_{L^{\frac{1}{\theta},1}(\R^N \times \R^N,\nu_{\gamma})} \\
&= (2\|u\|_{L^{\infty}(\R^N)})^{p\theta} \nu_{\gamma}(B_R \times B_R)^{\theta}, 
\end{align*}
from which it follows that 
\[
s \iint_{B_R \times B_R} \frac{|u(x)-u(y)|^p}{|x-y|^{N+p-sp}} \dif x \dif y \leq \frac{s}{\theta} \left [Q_{\gamma/p}u\right]_{L^{p,\infty}(\R^{2N},\nu_{\gamma})}^{p(1-\theta)} (2\|u\|_{L^{\infty}(\R^N)})^{p\theta} \nu_{\gamma}(B_R \times B_R)^{\theta}.
\]
Furthermore, since $\gamma > 0$, we have
\[
\nu_{\gamma}(B_R \times B_R) \leq |B_R| \int_{B_{2R}} \frac{1}{|h|^{N-\gamma}} \dif h < \infty.
\]
Recall $\theta = \frac{s}{1+\frac{\gamma}{p}}$. Thus as $s \to 0^+$, we have
\[
\limsup_{s \to 0^+} s \iint_{B_R \times B_R} \frac{|u(x)-u(y)|^p}{|x-y|^{N+p-sp}} \dif x \dif y \leq \left( 1+\frac{\gamma}{p} \right) \left [Q_{\gamma/p}u\right]_{L^{p,\infty}(\R^{2N},\nu_{\gamma})}^p < \infty.
\]
Since this upper bound holds uniformly over all $R > 0$, this concludes the argument for the case $\gamma > 0$.

Next we turn to the case $\gamma \leq 0$. We then observe that for $0 < s < 1$ and every $R > 0$,
\begin{align*}
& \iint_{B_R \times B_R} \frac{|u(x)-u(y)|^p}{|x-y|^{N+p-sp}} \dif x \dif y \\
=& \iint_{\R^N \times \R^N} \left( Q_{\gamma/p}u(x,y) \right)^{p(1-\frac{s}{2})} \left( |u(x)-u(y)| |x-y|^{1-\frac{\gamma}{p}} \bbone_{B_R \times B_R} \right)^{p\frac{s}{2}} \dif \nu_{\gamma} \\
\leq & \frac{2}{s} \left[ \left(Q_{\gamma/p}u\right)^{p(1-\frac{s}{2})} \right]_{L^{\frac{1}{1-\frac{s}{2}},\infty}(\R^{2N},\nu_{\gamma})} \left[ \left( |u(x)-u(y)| |x-y|^{1-\frac{\gamma}{p}} \right)^{p\frac{s}{2}} \right]_{L^{\frac{2}{s},1}(B_R \times B_R,\nu_{\gamma})}.
\end{align*}
Again
\[
\left[ \left(Q_{\gamma/p}u\right)^{p(1-\frac{s}{2})} \right]_{L^{\frac{1}{1-\frac{s}{2}},\infty}(\R^{2N},\nu_{\gamma})} = \left[ Q_{\gamma/p}u\right]_{L^{p,\infty}(\R^{2N},\nu_{\gamma})}^{p(1-\frac{s}{2})}
\]
and 
\begin{multline}
\left[ \left( |u(x)-u(y)| |x-y|^{1-\frac{\gamma}{p}} \right)^{p\frac{s}{2}} \right]_{L^{\frac{2}{s},1}(B_R \times B_R,\nu_{\gamma})} \\ \leq (2 \|u\|_{L^{\infty}(\R^N)})^{p \frac{s}{2}} \left[ |x-y|^{(p-\gamma)\frac{s}{2}} \right]_{L^{\frac{2}{s},1}(B_R \times B_R,\nu_{\gamma})}.
\end{multline}
We will show that
\begin{equation} \label{eq:Lor_est_2s}
\limsup_{s \to 0^+} \left[ |x-y|^{(p-\gamma)\frac{s}{2}} \right]_{L^{\frac{2}{s},1}(B_R \times B_R,\nu_{\gamma})} \leq 1 - \frac{\gamma}{p}
\end{equation}
when $\gamma \leq 0$. We then see that
\[
\limsup_{s \to 0^+} s \iint_{B_R \times B_R} \frac{|u(x)-u(y)|^p}{|x-y|^{N+p-sp}} \dif x\dif y \leq 2 \left(1-\frac{\gamma}{p}\right) \left[ Q_{\gamma/p}u\right]_{L^{p,\infty}(\R^{2N},\nu_{\gamma})}^{p}
\]
which concludes the argument in this case since this bound is uniform in $R > 0$.

It remains to prove \eqref{eq:Lor_est_2s} when $\gamma \leq 0$. Note that in this case $p - \gamma > 0$, so $|x-y|^{(p-\gamma)\frac{s}{2}} \leq (2R)^{(p-\gamma)\frac{s}{2}}$ on $B_R \times B_R$. Thus 
\[
\left[ |x-y|^{(p-\gamma)\frac{s}{2}} \right]_{L^{\frac{2}{s},1}(B_R \times B_R,\nu_{\gamma})}
= \int_0^{(2R)^{(p-\gamma)\frac{s}{2}}} \nu_{\gamma}\{(x,y) \in B_R \times B_R \colon |x-y|^{(p-\gamma)\frac{s}{2}} > \lambda \}^{\frac{s}{2}} \dif \lambda 
\]
If $\gamma < 0$, then
\[
\nu_{\gamma}\{(x,y) \in B_R \times B_R \colon |x-y|^{(p-\gamma)\frac{s}{2}} > \lambda \}
\leq |B_R| \int_{|h| > \lambda^{\frac{2}{s(p-\gamma)}}} \frac{1}{|h|^{N-\gamma}} \dif h
\leq \sigma_{N-1} |B_R| \frac{1}{|\gamma|} \lambda^{ \frac{2\gamma}{s(p-\gamma)}}
\]
where $\sigma_{N-1}$ is the surface area of $\Sset^{N-1}$. Hence in this case,
\begin{align*}
\left[ |x-y|^{(p-\gamma)\frac{s}{2}} \right]_{L^{\frac{2}{s},1}(B_R \times B_R,\nu_{\gamma})}
&\leq \left(  \sigma_{N-1} |B_R| \frac{1}{|\gamma|} \right)^{\frac{s}{2}} \int_0^{(2R)^{(p-\gamma)\frac{s}{2}}}  \lambda^{ \frac{\gamma}{p-\gamma}} \dif \lambda \\
&= \left(1-\frac{\gamma}{p}\right) \left(  \sigma_{N-1} |B_R| \frac{1}{|\gamma|} \right)^{\frac{s}{2}} (2R)^{p\frac{s}{2}}.
\end{align*}
(Here we used $\frac{\gamma}{p-\gamma} = -\frac{1}{1-\frac{\gamma}{p}} \in (-1,0)$ whenever $\gamma < 0$.)
This proves \eqref{eq:Lor_est_2s} when $\gamma < 0$.  Next, suppose $\gamma = 0$. Then 
\begin{align*}
\left[ |x-y|^{(p-\gamma)\frac{s}{2}} \right]_{L^{\frac{2}{s},1}(B_R \times B_R,\nu_{\gamma})}
&= \int_0^{(2R)^{p\frac{s}{2}}} \nu_{0}\{(x,y) \in B_R \times B_R \colon |x-y|^{p\frac{s}{2}} > \lambda \}^{\frac{s}{2}} \dif \lambda \\
&\leq \int_0^{(2R)^{p\frac{s}{2}}} \left( |B_R| \int_{\lambda^{\frac{2}{sp}} \leq |h| \leq 2R} \frac{1}{|h|^{N}} dh \right)^{\frac{s}{2}} \dif \lambda \\
&= \int_0^{(2R)^{p\frac{s}{2}}} \left( |B_R| \omega_{N-1} \frac{2}{ps} \log\left( \frac{(2R)^{p\frac{s}{2}}}{\lambda} \right) \right)^{\frac{s}{2}} \dif \lambda \\
&= (2R)^{p\frac{s}{2}} \int_0^{1} \left( |B_R| \omega_{N-1} \frac{2}{ps} \log\left( \frac{1}{\lambda} \right) \right)^{\frac{s}{2}} \dif \lambda
\end{align*}
which shows \eqref{eq:Lor_est_2s} remains valid when $\gamma = 0$ by the dominated convergence theorem. \qed

\section{ Finiteness of $\nu_0(E_{\la,0}[u]) $ and the Lipschitz norm} \label{sec:gamma=0}
In this section we prove \cref{thm:counterexamples-gamma=0} which we put in the following more precise form.
\begin{proposition} \label{thm:gamma=0} 
Let $u$ be  locally integrable on $\bbR^N$ and  $\nabla u\in L^1_\loc(\bbR^N)$.
Then 
\[ 
\nu_0( E_{\la,0} [u])= 
\begin{cases}  0 &\text{ if } \la> \|\nabla u\|_\infty, \\ 
\infty &\text{ if } \la< \|\nabla u\|_\infty. 
\end{cases} 
\]
\end{proposition}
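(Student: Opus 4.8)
\textbf{Strategy.} The plan is to prove the two cases separately, both via the method of rotation that reduces to $N=1$, together with the elementary one-dimensional fact that for an absolutely continuous $v$ on an interval, $|v(x)-v(y)| \le \|v'\|_{L^\infty}|x-y|$, while conversely on any interval where $|v'|$ exceeds $\lambda$ on a positive-measure set one can find nearby points with $|v(x)-v(y)|/|x-y| > \lambda$.

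\textbf{The case $\lambda > \|\nabla u\|_\infty$.} Here I would argue directly in $\R^N$. Since $\nabla u \in L^1_{\loc}$ and $\|\nabla u\|_{L^\infty} < \lambda$, the function $u$ has a representative that is Lipschitz with constant $\|\nabla u\|_\infty$ on every line segment; more precisely, using the analogue of \eqref{eq:Sob_FTC} (valid here since $u \in W^{1,1}_{\loc}$, applied on bounded pieces) one gets $|u(x)-u(y)| \le \int_{[x,y]}|\nabla u| \le \|\nabla u\|_{L^\infty(\R^N)}|x-y| < \lambda|x-y|$ for a.e.\ $(x,y) \in \R^N \times \R^N$. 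Hence $Q_0 u(x,y) = |u(x)-u(y)|/|x-y| \le \|\nabla u\|_\infty < \lambda$ for $\mathcal{L}^{2N}$-a.e.\ $(x,y)$, so $E_{\lambda,0}[u]$ has Lebesgue measure zero in $\R^{2N}$; since $\nu_0$ is absolutely continuous with respect to $\mathcal{L}^{2N}$ away from the diagonal (its density $|x-y|^{-N}$ is locally integrable off the diagonal), we conclude $\nu_0(E_{\lambda,0}[u]) = 0$.

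\textbf{The case $\lambda < \|\nabla u\|_\infty$.} This is the substantive direction. Pick $\lambda < \mu < \|\nabla u\|_\infty$. By definition of the essential supremum, the set $\{x : |\nabla u(x)| > \mu\}$ has positive Lebesgue measure, hence by Lebesgue density there is a point $x_0$ and a direction $\theta_0 \in \S^{N-1}$ such that, writing $u_{\theta_0,x'}(t) = u(x'+t\theta_0)$, the one-dimensional derivative $|u_{\theta_0,x'}'(t)| = |\theta_0\cdot\nabla u(x'+t\theta_0)|$ exceeds $\mu$ on a set of positive one-dimensional measure for $x'$ in a positive-measure subset of a small disk $D \subset \theta_0^\perp$, uniformly over a range of parameters (this uses Fubini on the set $\{|\theta\cdot\nabla u| > \mu\}$ after noting that $|\nabla u(x)| > \mu$ forces $|\theta\cdot\nabla u(x)| > \mu$ for $\theta$ in a spherical cap of fixed size). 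On each such line, the set of pairs $(t,s)$ with $|u_{\theta_0,x'}(t) - u_{\theta_0,x'}(s)| > \lambda|t-s|$ — equivalently $(x'+t\theta_0,\, x'+s\theta_0) \in E_{\lambda,0}[u]$ — contains, for every $t$ that is a Lebesgue point of $u_{\theta_0,x'}'$ with $|u_{\theta_0,x'}'(t)| > \mu$, all $s$ in a one-sided interval $(t, t+\eta)$ of some length $\eta = \eta(t) > 0$ (indeed $\frac{u(t)-u(s)}{t-s} \to u'(t)$ as $s\to t$). Crucially, on such lines one can then \emph{translate}: because $u$ restricted to the line has a point where the difference quotient exceeds $\mu > \lambda$ at \emph{arbitrarily small but also not-too-small} scales, the pair $(x'+t\theta_0, x'+s\theta_0)$ stays in $E_{\lambda,0}[u]$ as one slides both points by a common amount $\tau$ along $\theta_0$, over a $\tau$-interval of fixed positive length. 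Integrating the density $|x-y|^{0-N} = |x'-x'|{}^{-N}$... — more carefully: parametrising $E_{\lambda,0}[u]$ near such configurations by $(x', \tau, t, s)$ with $x' \in D'$ (a positive-measure set), $\tau$ in an interval of length $c_0$, and $(t,s)$ ranging over a fixed open set $U \subset \R^2$ with $|t-s|$ bounded, the contribution to $\nu_0$ is
\[
\nu_0(E_{\lambda,0}[u]) \;\ge\; \int_{\S^{N-1}} \int_{D'} \int_0^{c_0} \iint_{U} |t-s|^{-1}\, \dif t\, \dif s\, \dif\tau\, \dif x'\, \dif\theta \;=\; +\infty,
\]
the divergence coming from the $\tau$-translation (a one-dimensional translation of a positive-$\nu_0$-measure planar set produces infinite $\nu_0$ because the weight $|t-s|^{-1}$ is translation-invariant and the $\tau$-direction carries a full line's worth of copies — more transparently, fixing $x'$ and the shape $U$, the set of $(t,s)$ pairs along the line, parametrised by their midpoint $m = (t+s)/2$ over an interval of length $\ge c_0$ and half-difference $\rho = (t-s)/2$ over a fixed interval bounded away from $0$, contributes $\int \int \rho^{-1}\dif\rho\, \dif m$ which is finite — so the infinitude must instead be extracted by letting the \emph{scale} $|t-s|$ shrink: at a Lebesgue point with $|u'|>\mu$ the difference quotient exceeds $\lambda$ for all $|t-s|<\eta$, so along a single good line the planar set $\{(t,s): |t-s|<\eta,\ \text{midpoint}\in J\}$ for an interval $J$ of positive length already has $\nu_0$-measure $\int_J \int_{|t-s|<\eta} |t-s|^{-1}\,\dif(t-s)\,\dif m = +\infty$ since $\int_{0}^{\eta} \rho^{-1}\dif\rho = \infty$).

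\textbf{Main obstacle.} The delicate point is the last one: showing that a \emph{single} good line — one on which $u$ is differentiable at a point with $|u'| > \lambda$ — already forces $\nu_0(E_{\lambda,0}[u]) = \infty$, because the weight $|x-y|^{-N}$ blows up on the diagonal and the superlevel set $E_{\lambda,0}[u]$ reaches the diagonal transversally near such a point. Concretely: if $t_0$ is a point of differentiability of $g := u_{\theta_0,x'}$ with $|g'(t_0)| > \lambda$, then $(x'+t\theta_0,\, x'+s\theta_0) \in E_{\lambda,0}[u]$ for all $(t,s)$ in a full two-dimensional neighborhood of $(t_0,t_0)$ minus the diagonal, and
\[
\nu_0(E_{\lambda,0}[u]) \;\ge\; \int_{D'} \iint_{\substack{(t,s)\in(t_0-\delta,t_0+\delta)^2\\ t\neq s}} |t-s|^{-N}\cdot|t-s|^{N-1}\,\dif t\,\dif s\,\dif x' \cdot c(\theta_0\text{-cap}) ,
\]
wait — I must be careful that after the method-of-rotation change of variables $(x,y) \mapsto (x', \omega, r, s)$ the Jacobian supplies $|r-s|^{N-1}$, cancelling the weight down to $|r-s|^{-1}$, whence $\int_{|r-s|<\delta} |r-s|^{-1}\,\dif(r-s) = +\infty$ gives the claim. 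So the genuine content to verify carefully is exactly this reduction formula for $\nu_0$ under rotations (the $N=1$-with-transverse-parameters identity, which is the $\gamma=0$ case of the method-of-rotation identity already used in \cref{proposition_weighted_upper_L1} and \cref{lem:failureoflimitsBV}) together with the elementary observation that differentiability with $|g'(t_0)|>\lambda$ makes the planar superlevel set of $(t,s)\mapsto|g(t)-g(s)|/|t-s|$ contain a punctured neighborhood of $(t_0,t_0)$. Everything else is routine; I would write the $\lambda<\|\nabla u\|_\infty$ case as the main body of the proof and dispatch $\lambda>\|\nabla u\|_\infty$ in two lines.
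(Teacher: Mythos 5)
Your treatment of the case $\la>\|\nabla u\|_\infty$ is correct and coincides with the paper's. For $\la<\|\nabla u\|_\infty$ you take a genuinely different route: reduction to one dimension by the method of rotation plus almost-everywhere differentiability of $u$ along lines, whereas the paper never invokes pointwise differentiation. Instead it localizes $u$, represents the difference quotient via \eqref{eq:Sob_FTC}, isolates a main set $\cE_1$ on which the directional derivative at $x$ exceeds $\la_1$ for $x$ in a fixed positive-measure set $F$ and $h/|h|$ in a fixed spherical cap $S$, controls the error set $\cE_2$ by the $L^1$-continuity of translation, and obtains the quantitative bound $\nu_0(E_{\la,0}[u])\ge\tfrac12\cL^N(F)\sigma(S)\log(\delta_0/\delta)\to\infty$ as $\delta\to 0$. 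Both arguments extract the same logarithmic divergence of $|x-y|^{-N}$ near the diagonal; the paper does it uniformly over a range of scales $[\delta,\delta_0]$, you do it at each good base point with a point-dependent radius.

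However, the step you single out as ``the genuine content to verify'' is false: differentiability of $g=u_{\theta_0,x'}$ at $t_0$ with $|g'(t_0)|>\la$ does \emph{not} force the superlevel set of $(t,s)\mapsto|g(t)-g(s)|/|t-s|$ to contain a punctured two-dimensional neighborhood of $(t_0,t_0)$. Differentiability at $t_0$ only controls quotients anchored at $t_0$; for pairs with $|t-s|\ll\min(|t-t_0|,|s-t_0|)$ the quotient is governed by $g'$ near $t$, not at $t_0$. For instance $g(t)=2t+t^2\sin(1/t)$ has $g'(0)=2$ but $g'(1/(2k\pi))=1$, so every punctured neighborhood of $(0,0)$ contains pairs with quotient below $3/2$. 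Consequently the displayed integral over $(t_0-\delta,t_0+\delta)^2$ is unjustified, as is the inclusion of $\{(t,s):|t-s|<\eta,\ \text{midpoint}\in J\}$ in the superlevel set (that would require a uniform $\eta$ over an interval of base points). The correct statement is the one you make earlier: for a.e.\ $t$ in the positive-measure set $A=\{t:|g'(t)|>\la\}$ there is $\eta(t)>0$ such that $(t,s)$ lies in the superlevel set for all $0<|s-t|<\eta(t)$. Choosing $n$ with $\cL^1(A_n)>0$, where $A_n=\{t\in A:\eta(t)>1/n\}$, Tonelli gives
\[
\iint_{\{t\in A_n,\;0<|s-t|<1/n\}}|t-s|^{-1}\dif t\dif s=\cL^1(A_n)\int_{0<|r|<1/n}|r|^{-1}\dif r=\infty,
\]
and this must be arranged on a positive-measure set of $(\theta,x')$ (via Fubini on $\{|\theta\cdot\nabla u|>\la\}$ together with the ACL property of $W^{1,1}_\loc$ functions) so that the rotation identity yields $\nu_0(E_{\la,0}[u])=\infty$. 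With that repair your proof closes; the $\tau$-translation digression, which you correctly note yields only a finite contribution, should be deleted.
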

\begin{proof} 
First assume  $\nabla u\in L^\infty$ and  $\la>\|\nabla u\|_\infty$. Then for  every $h\in \bbR^N$ we have $\frac{|u(x+h)-u(x)|}{|h|}\le \la $ for almost every $x\in \bbR^N$.  This immediately implies  $\nu_0(E_{\la,0}[u])=0$.

For the more substantial part  assume $\la< \|\nabla u\|_\infty$ where  $\|\nabla u\|_\infty$ may be finite or infinite. We need to show that $\nu_0(E_{\la,0}[u])=\infty$. 
We pick $\la_1, \la_2$ such that
\[ 
\la<\la_1<\la_2<\|\nabla u\|_\infty. 
\]
Let $B_R=\{x\in \bbR^N:|x|<R\}$ and assume that $R>1$ is so large that $\norm{\nabla u}_{L^\infty (B_R)} > \lambda_2$ is not the zero distribution on $B_R$.  Let $\chi\in C^\infty_c$ such that $\chi(x)=1$ in a neighborhood of $\overline{B_{2R}}$ and set $u_\circ=\chi u$, Then $\nabla u_\circ=\nabla u$  in the sense of $L^1(B_{2R})$.  There is a measurable set $F_0\subset B_R$ of positive measure  such that $|\nabla u(x)|> \la_2$ for all $x\in F_0$.

Fix  $0<\eps\ll  1-\frac{\la_1}{\la_2} $.  We now consider the set $\fS_\eps$ of all spherical balls  $S \subset \Sset^{N-1}$ with positive radius and the property that $\inn{\theta_1}{\theta_2} >1-\eps$ for all $\theta_1, \theta_2\in S$.  By pigeonholing there exists a spherical ball  $S\in \fS_\eps$  and a  Lebesgue measurable subset $F\subset F_0$  such that $\cL^N(F)>0$ and $\frac{\nabla u(x)}{|\nabla u(x)|} \in S$ for all $x\in F$. For the remainder of the argument we fix this spherical ball $S$; we denote by $\sigma(S)$ its spherical measure.

We first note that  for $|h|\le 1$  and for almost every $|x|\le R$ 
\Be \label{eq:preliminary-obs}
\frac{u (x+h)- u(x)}{|h|} =
\frac{u_\circ (x+h)- u_\circ(x)}{|h|} 
= \ \Biginn{\frac{h}{|h|} }{\int_0^1 \nabla u_\circ (x+sh) \dif s}.
\Ee
Secondly since the translation operator is continuous in the strong operator topology of $L^1$ we see that there exists $\delta_0<1$ such that 
\Be \label{eq:continuity}  \|\nabla u_\circ (\cdot + w)-\nabla u_\circ \|_{L^1(\bbR^N)} < \frac{ \cL^N(F) (\la_1-\la) }{10} \text{  for  } |w|\le \delta_0.
\Ee

In what follows we let $\delta\ll \delta_0$ and set
\[ 
S(\delta,\delta_0)=\Bigl\{h\in \bbR^N: \delta\le |h|\le \delta_0, \frac{h}{|h|} \in S \Bigr\}. 
\]
Let 
\[
\cE_0= \Bigl\{ (x,h): x\in F, \, h\in S(\delta,\delta_0), \,\frac{|u(x+h)-u(x) |}{|h|} >\la \Bigr\}
\]
so that $(x, h) \in \cE_0$ implies $(x, x+h) \in  E_{\la,0}[u]$.
We then have by \eqref{eq:preliminary-obs}
\begin{align} \notag
    \nu_0(E_{\la,0}[u])&\ge \nu_0 (\cE_0) 
    = \nu_0\Bigl(\Bigl\{(x,h): x\in F, \, h\in S(\delta,\delta_0), \, |\inn { \tfrac{h}{|h|}}{ \int_0^1 \nabla u_\circ (x+sh)\dif s} | >\la  \Bigr\}\Bigr) \\ 
 \label{eq:difference}
&\ge \nu_0 (\cE_1) -\nu_0 (\cE_2) 
\end{align}
where
\begin{align*}
    \cE_1&=\{ (x,h): x\in F,\, h\in S(\delta,\delta_0) ,\, |\inn{\tfrac{h}{|h|}}{\nabla u_\circ(x)} |> \lambda_1\}   
    \\
    \cE_2&=\Bigl\{(x,h): x\in F, h\in S(\delta,\delta_0) ,\, \int_0^1 |\nabla u_\circ(x+sh)-\nabla u_\circ(x) | \dif s >\lambda_1-\la  \Bigr\}.
\end{align*}
Indeed, if $(x,h) \notin \cE_0\cup  \cE_2$ then 
\[|\inn{\tfrac{h}{|h|}}{\nabla u_\circ(x)} |
\le | \inn { \tfrac{h}{|h|}}{ \int_0^1 \nabla u_\circ (x+sh)\dif s}|+ 
 \int_0^1 |\nabla u_\circ(x+sh)-\nabla u_\circ(x) | \dif s\]
 which is then $\le \la_1$, so $(x,h)\notin \cE_1$, establishing $\cE_1\subset \cE_0\cup\cE_2$ and thus \eqref{eq:difference}.

The set $\cE_1$  does not change if we  replace $u_\circ$ by $u$  in its definition. Since 
\[ 
\inn{\tfrac{h}{|h|} }{\nabla u(x)} \ge  (1-\eps)  |\nabla u(x)| > (1-\eps)\la_2>\la_1 \text{ for $x\in F$, $\tfrac{h}{|h|}\in S$ } 
\] 
we get 
\[
\nu_0(\cE_1) \ge \int_F dx \int_{S(\delta,\delta_0)} \frac{\dif h}{|h|^N} =
\cL^N(F)\sigma(S) \log (\delta_0/\delta).
\]

Moreover, using \eqref{eq:continuity} and Chebyshev's inequality we see that 
\begin{align*} 
\nu_0(\cE_2) &\le \int_{S(\delta,\delta_0)} \frac{ \int_0^1 \|\nabla u_\circ(\cdot+sh)-\nabla u_\circ \|_{L^1(\bbR^N)} \dif s }{\la_1-\la} \frac{dh}{|h|^N} 
\\
&\le \int_{S(\delta,\delta_0)}  \frac {\cL^N(F)(\la_1-\la)/10}{\la_1-\la}  \frac{dh}{|h|^N}  = \frac{\cL^N(F)}{10}\sigma(S) \log(\delta_0/\delta) 
\end{align*}
and hence putting pieces together we obtain for $\delta<\delta_0$
\[
\nu_0(E_{\la,0}[u]) \ge \nu_0(\cE_1)-\nu_0(\cE_2)>  \frac { \cL^N(F)}{2}\sigma(S)  \log(\delta_0/\delta).
\]
Here $\delta<\delta_0$ was arbitrary and by letting $\delta\to 0$ we conclude that $\nu_0(E_{\la,0}[u]) =\infty$.
\end{proof}

We now give a more precise version of \cref{lem:domains}.
\begin{lemma} \label{lem:domainsprecise} 
Let $\Omega \subset \R^N$ be a bounded domain with Lipschitz boundary and let $u=\bbone_\Omega$. Then $u\in{BV}(\bbR^N) \setminus \dot W^{1,1}(\bbR^N)$  with
\[
  \nu_0(E_{\la,0}[u])\le C_\Omega \times 
\begin{cases} 
   \log(2/\la) &\text{ if } \la\le 1,  \\
   \la^{-1} &\text{ if } \la>1;
\end{cases}
\] 
in particular we have $\sup_{\la>0} \la\,  \nu_0(E_{\la,0}[u])<\infty$.
\end{lemma}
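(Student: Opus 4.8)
The plan is to make $E_{\la,0}[u]$ completely explicit and then reduce the whole estimate to the fact that a Lipschitz boundary has a thin inner tubular neighborhood.

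Since $u=\bbone_\Omega$ takes only the values $0$ and $1$, one has $|u(x)-u(y)|=1$ exactly when $x$ and $y$ lie on opposite sides of $\partial\Omega$ and $|u(x)-u(y)|=0$ otherwise; hence
\[
E_{\la,0}[u]=\bigl\{(x,y):\bbone_\Omega(x)\ne\bbone_\Omega(y),\ 0<|x-y|<1/\la\bigr\}.
\]
Using the symmetry $x\leftrightarrow y$, the substitution $h=y-x$, and polar coordinates $h=r\omega$ (for which $|h|^{-N}\dif h=r^{-1}\dif r\,\dif\omega$), I would rewrite this as
\[
\nu_0(E_{\la,0}[u])=2\int_\Omega\int_0^{1/\la}g_x(r)\,\frac{\dif r}{r}\,\dif x,\qquad g_x(r):=\sigma\bigl(\{\omega\in\Sset^{N-1}:x+r\omega\notin\Omega\}\bigr),
\]
where $\sigma$ is the surface measure on $\Sset^{N-1}$. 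The point of writing it this way is that $g_x$ obeys two elementary bounds: $g_x(r)\le\sigma_{N-1}$ always, and $g_x(r)=0$ whenever $r<d(x):=\dist(x,\partial\Omega)$, since then $\overline{B(x,r)}\subset\Omega$. Plugging these in gives
\[
\nu_0(E_{\la,0}[u])\le 2\sigma_{N-1}\int_{\{x\in\Omega:\,d(x)<1/\la\}}\log\frac{1}{\la\,d(x)}\,\dif x.
\]

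The Lipschitz hypothesis now enters only through the estimate $F(t):=\cL^N\bigl(\{x\in\Omega:d(x)\le t\}\bigr)\le C_\Omega\,t$ for all $t>0$, which for small $t$ is the finiteness of the Minkowski content of $\partial\Omega$ (a standard consequence of covering $\partial\Omega$ by finitely many Lipschitz graph patches) and for large $t$ is trivial since $\Omega$ is bounded. For $\la>1$ I would write the last integral as $\int_0^{1/\la}\log\tfrac1{\la t}\,\dif F(t)$ and integrate by parts; the boundary terms vanish because $F(t)\log\tfrac1{\la t}\le C_\Omega t\log\tfrac1{\la t}\to0$ as $t\to0^+$, leaving $\int_0^{1/\la}F(t)/t\,\dif t\le C_\Omega/\la$. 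For $\la\le1$ I would instead use $\log\tfrac1{\la d(x)}\le\log\tfrac1\la+\log^+\tfrac1{d(x)}$ together with $\int_\Omega\log^+\tfrac1{d(x)}\,\dif x=\int_0^\infty F(e^{-s})\,\dif s<\infty$ (again from $F(t)\le C_\Omega t$), which yields $\nu_0(E_{\la,0}[u])\le C_\Omega(\log\tfrac1\la+1)\le C_\Omega\log\tfrac2\la$. Together these give the two-regime bound in the statement, and since $\la\log\tfrac2\la$ is bounded on $(0,1]$ while $\la\cdot\la^{-1}=1$, one obtains $\sup_{\la>0}\la\,\nu_0(E_{\la,0}[u])<\infty$.

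It remains to record the membership claims: $u=\bbone_\Omega$ lies in $BV(\Rset^N)$ (it is in $L^1$ since $\Omega$ is bounded, and its distributional gradient is the finite $\Rset^N$-valued measure given by the outward unit normal times the surface measure of $\partial\Omega$, whose total variation is $\le\|u\|_{\cM}<\infty$ for Lipschitz $\Omega$), but $u\notin\dot W^{1,1}(\Rset^N)$ because that gradient is a nonzero measure carried by the $\cL^N$-null set $\partial\Omega$, hence not absolutely continuous. As for difficulty: there is essentially only one substantive ingredient, namely $F(t)\le C_\Omega t$ near $t=0$; everything else is bookkeeping with the two pointwise bounds on $g_x$. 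The only place where a little care is needed is matching the logarithmic regime to the $\la^{-1}$ regime — morally, whether $1/\la$ exceeds the diameter of $\Omega$ — which is handled by the split at $\la=1$.
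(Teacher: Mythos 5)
Your proof is correct and rests on the same geometric input as the paper's: the $O(r)$ bound on the Lebesgue measure of the $r$-neighborhood of the Lipschitz boundary (your $F(t)\le C_\Omega t$), combined with the observation that $(x,y)\in E_{\la,0}[u]$ forces $u(x)\ne u(y)$ and $|x-y|<1/\la$. The only difference is organizational — you integrate exactly in polar coordinates and use a layer-cake/integration-by-parts step where the paper sums over dyadic shells $r\le|x-y|\le 2r$ — so this is essentially the paper's argument in continuous form.
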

\begin{proof}
Let 
\[
E(r,\la)=\{ (x,y) \in E_{\lambda,0}[u] \colon r\le |x-y|\le 2r\}.
\]
We begin with the observation  that $r\la\le 2$ if $\nu_0(E(r,\la)) > 0$. 
Furthermore, if $(x,y)\in E(r,\la)$ for some $y \in \R^N$, then  $x$ belongs to the  $2r$-neighborhood of $\partial \Omega$. The Lebesgue measure of such a neighborhood is $O(r)$ if $r \leq r_0$ where $r_0$ is some positive constant depending on $\Omega$ (because the boundary of a bounded Lipschitz domain can be covered by finitely many Lipschitz graphs, and the $2r$-neighborhood of such graphs can be approximated by a union of $O(r)$ neighborhoods of suitable hyperplanes). 
Hence for $r\le r_0$ we have $\nu_0(E(r,\la)) \leq Cr$ if $r\le 2/\la$ and $\nu_0(E(r,\la)) = 0$ if $r>2/\la$. 
As a result, if $ 2/\la\le r_0$ we get
\[
\nu_0(E_{\la,0}[u]) \le \sum_{j \in \mathbb{Z} \colon 2^j \le 2/\lambda} \nu_0(E(2^j,\lambda)) \lc \la^{-1}
\] 
and if $2/\la>r_0$ we get 
\[\nu_0(E_{\la,0}[u]) 
\le \sum_{j \in \mathbb{Z} \colon 2^j \le r_0} \nu_0(E(2^j,\lambda)) + 2 \int_{\Omega} \int_{r_0\le |x-y|\le 2/\la} \frac{dy}{|x-y|^N} dx \lc 1+ \log(\la^{-1} ).
\]
\end{proof}

\section{When the upper bound \eqref{eq:Marc-1} fails}\label{sec:lower-bounds} 

In this section we make various constructions demonstrating the failure of \eqref{eq:Marc-1} in the range $-1\le \gamma<0$, and give the proof of   \cref{thm:counterexamples}. We first establish 
\begin{proposition} \label{proposition_Mp_gamma_-1_0_1}
Suppose $N \geq 1$ and  $-1 \le  \gamma < 0$.
\begin{enumerate}[(i)]
\item For every $m > 0$, there exists $u \in C^{\infty}_c(\R^N)$ such that
\begin{equation}\label{eq:lower1}
\nu_\ga(E_{1,\gamma}[u]) >m \norm{\nabla u}_{L^1(\R^N)}.
\end{equation}
\item  There exists $C = C(N,\gamma)>0$ and $p_0 = p_0(N,\ga) > 1$ such that for all $1 < p < p_0$,
\begin{equation} \label{eq:lowerp}
\sup\limits_{ \substack{u\in C^\infty_c(\bbR^N)\\ \norm{\nabla u}_{L^p}\le 1}} \nu_\ga( E_{1,\gamma/p}[u]) \ge 
 C \frac{p}{p-1}.
 \end{equation}
 \end{enumerate}
 \end{proposition}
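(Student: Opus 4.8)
The plan is to reduce to $N=1$ and, for $-1<\ga<0$, build the extremal functions from the self-similar devil's staircase attached to a Cantor set of Hausdorff dimension $1+\ga$; the borderline case $\ga=-1$ (where this construction degenerates, since the contraction ratio tends to $0$) is the classical example of Ponce recorded in \cite{Nguyen06} and \cite{BN2018}, where the exponent $1+\ga/p=1-1/p$ produces the failure directly, so below I concentrate on $-1<\ga<0$. For the reduction, given a one-dimensional $v\in C^\infty_c(\R)$ with $\|v\|_{L^p(\R)}$ and $\|v'\|_{L^p(\R)}$ controlled and with $\nu_\ga(E_{1+\eps_0,\ga/p}[v])$ large for some fixed $\eps_0>0$ (a threshold slightly above $1$, which the construction below supplies), I would set $u(x_1,\dots,x_N)\defeq v(x_1)\prod_{i=2}^N\eta(x_i)$ for a fixed bump $\eta\in C^\infty_c(\R)$ with $\eta\equiv1$ near $0$; then $\|\nabla u\|_{L^p}\lc_N\|v\|_{L^p}+\|v'\|_{L^p}$, and restricting to pairs with $x',y'\in\{\eta\equiv1\}$ and $|x'-y'|\le\eps_0|x_1-y_1|$ (so $|x-y|\le(1+\eps_0)|x_1-y_1|$ and $Q_{\ga/p}u\ge(1+\eps_0)^{-1}Q_{\ga/p}v$) shows $\nu_\ga(E_{1,\ga/p}[u])\gc_{N,\eps_0}\nu_\ga(E_{1+\eps_0,\ga/p}[v])$. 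Hence it suffices to treat $N=1$.

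For $N=1$, let $\rho\defeq2^{-1/(1+\ga)}\in(0,\tfrac12)$ — the contraction ratio of a two-map self-similar Cantor set of dimension $1+\ga$ — and let $\psi_n\colon[0,1]\to[0,1]$ be the level-$n$ piecewise-linear devil's staircase: $\psi_n$ is affine with slope $2^{-n}\rho^{-n}$ on each of the $2^n$ level-$n$ intervals (length $\rho^n$) and constant on the gaps, so that the restriction of $\psi_n$ to any level-$j$ interval, rescaled to $[0,1]$ in both variables, is exactly $\psi_{n-j}$. Mollifying at a scale $\ll\rho^n$ and multiplying by a fixed cutoff produces $w_n\in C^\infty_c(\R)$ that coincides with $\psi_n$ away from the small-diagonal region that matters below and satisfies $\|w_n'\|_{L^p}\le2\Lambda_n$, where $\Lambda_n\defeq(2\rho)^{n(1-p)/p}=\|\psi_n'\|_{L^p}$ (so $\Lambda_n\equiv1$ when $p=1$, while $\Lambda_n\to\infty$ when $p>1$). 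For a fixed $A>1$ put $v_n\defeq(A/\Lambda_n)w_n$, so that $\|v_n'\|_{L^p}\le2A$ and, up to harmless modifications on the relevant regions, $E_{1,\ga/p}[v_n]=\{|Q_{\ga/p}\psi_n|>\Lambda_n/A\}$.

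The key point of the choice $\rho^{1+\ga}=\tfrac12$ is that the difference quotient, the threshold, and the measure $\nu_\ga$ all transform compatibly under the similarities: on a level-$(j-1)$ interval one has $Q_{\ga/p}\psi_n=\Lambda_{j-1}\cdot(Q_{\ga/p}\psi_{n-j+1})\circ(\text{rescaling})$, the amplification $\Lambda_{j-1}=(2\rho^{1+\ga/p})^{-(j-1)}$ being trivial when $p=1$, while $\dif\nu_\ga$ scales by $(\rho^{1+\ga})^{j-1}=2^{-(j-1)}$ and there are $2^{j-1}$ such intervals. Since $\Lambda_n/\Lambda_{j-1}=\Lambda_{n-j+1}$, the contribution to $\nu_\ga(E_{1,\ga/p}[v_n])$ coming from pairs $(x,y)$ that straddle a level-$j$ gap (i.e.\ lie in a common level-$(j-1)$ interval but in distinct level-$j$ sub-intervals) equals exactly $d_{n-j+1}$, where $d_k$ is the $\nu_\ga$-measure of $\{(\xi,\zeta)\in[0,\rho]\times[1-\rho,1]\colon|Q_{\ga/p}\psi_k(\xi,\zeta)|>\Lambda_k/A\}$. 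The straddling regions for distinct $j$ are pairwise disjoint, whence
\[
\nu_\ga(E_{1,\ga/p}[v_n])\ \ge\ \sum_{j=1}^n d_{n-j+1}\ =\ \sum_{k=1}^n d_k .
\]

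It remains to bound $\sum_{k=1}^n d_k$ from below, and this is the only genuine estimate. Because the devil's staircase is H\"older of order $1+\ga$ with a constant $C_\ga$ depending only on $\ga$, one has $Q_{\ga/p}\psi_k(\xi,\zeta)\ge1-C_\ga\max\{\xi,1-\zeta\}^{1+\ga}$ on $[0,\rho]\times[1-\rho,1]$ (the value at the corner $(\xi,\zeta)=(0,1)$ being $1$, using $1+\ga/p\in(0,1)$ in the denominator); hence whenever $\Lambda_k/A\le1-c$ for a fixed $c>0$, a \emph{fixed} box $[0,\eta_0]\times[1-\eta_0,1]$ with $\eta_0=\eta_0(\ga,c)$ lies inside the superlevel set, so $d_k\ge\delta$ with $\delta\defeq\eta_0^{\,2}>0$, uniformly in $k$ and in $p$ over a bounded range. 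Taking $A=2$ (with a little extra room baked in for the $N$-dimensional reduction), the condition $\Lambda_k/A\le1-c$ holds for all $k$ when $p=1$, and for $k\le k_0$ when $p>1$, where $k_0\asymp\frac{\log A}{(p-1)\log(1/2\rho)}\asymp\frac1{p-1}$. Choosing $n\ge k_0$ therefore gives $\nu_\ga(E_{1,\ga/p}[v_n])\ge k_0\,\delta$: when $p=1$ this is $n\delta\to\infty$, which together with $\|v_n'\|_{L^1}\le2A$ and the reduction proves (i); when $p>1$, combining with $\|v_n'\|_{L^p}\le2A$ (and the reduction, costing only a dimensional constant) yields $\sup_{\|\nabla u\|_{L^p}\le1}\nu_\ga(E_{1,\ga/p}[u])\gc_{N,\ga}k_0\asymp\frac1{p-1}\asymp\frac{p}{p-1}$ for all $p$ below a threshold $p_0=p_0(N,\ga)$, which is (ii). The main obstacle is engineering the exact balance $\rho^{1+\ga}=\tfrac12$ so that the self-similar decomposition closes and each scale contributes the same fixed amount; the rest — the level/gap bookkeeping, the disjointness, the mollification and compact support, and the passage from $N=1$ to general $N$ — is routine but must be done carefully.
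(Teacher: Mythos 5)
Your treatment of the main case $-1<\gamma<0$ is essentially the paper's own argument: the same contraction ratio $\rho=2^{-1/(1+\gamma)}$, chosen precisely so that $\nu_\gamma$ scales by $\tfrac12$ per level of the Cantor construction; the same decomposition of the superlevel set according to the level at which $x$ and $y$ first separate; the same cancellation between the $2^{j-1}$ copies and the factor $(\rho^{1+\gamma})^{j-1}=2^{-(j-1)}$; and the same conclusion that each of the $\asymp\frac{1+\gamma}{|\gamma|}\,p'$ admissible levels contributes a fixed amount $\delta>0$. Your bookkeeping (the amplification $\Lambda_{j-1}$ of the difference quotient under the similarity, cancelling against $\Lambda_n/\Lambda_{j-1}=\Lambda_{n-j+1}$) is correct, and your corner estimate via the uniform H\"older-$(1+\gamma)$ bound for $\psi_k$ is a valid substitute for the paper's observation that $g_j\le 1/4$ on $[0,\rho^2]$ and $g_j\ge 3/4$ on $[1-\rho^2,1]$. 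Your reduction from $N=1$ to general $N$ — restricting to $x',y'$ in the flat part of the cutoff and to the cone $|x'-y'|\le\eps_0|x_1-y_1|$, at the price of a slightly raised one-dimensional threshold — is a marginally cleaner route than the paper's splitting $Q_{\gamma/p}u_m=I_m+I\!I_m$ with a separate bound on the cross term, but it is the same idea.

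The one genuine incompleteness is $\gamma=-1$, which you outsource to Ponce's example as recorded in \cite{Nguyen06} and \cite{BN2018}. For part (i) that citation is defensible, but for part (ii) those sources only treat $p=1$, and the quantitative $p/(p-1)$ blow-up at $\gamma=-1$ is not just "produced directly by the exponent $1-1/p$": one still has to exhibit the family and check both sides. The paper does this in two lines with $v_m=\bbone_{B_1}*\eta_{1/m}$: one has $\|\nabla v_m\|_{L^p}\lc 2^{m/p'}\lc 1$ for $m\le p'$, while integrating $|x-y|^{-1-N}$ over $|x|\le 1-2^{-m}$, $1+2^{-m}\le|y|\le 2$ gives $\nu_{-1}(E_{1,-1/p}[v_m])\gc m$. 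This is an easy fix, but as written your $\gamma=-1$ case is an assertion rather than a proof.
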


\subsection{\it Proof of \cref{proposition_Mp_gamma_-1_0_1}:   The case $\gamma=-1$.}\label{sec:proof-ga=1}
Here we may choose, for $m>1$, 
\begin{equation} \label{eq:vmdef}
    v_m = \bbone_{B_1} * \eta_{1/m} \in C^{\infty}_c(\R^N)
\end{equation} 
where $\eta_{1/m}(x) := 2^{mN} \rho(2^m x)$ for some non-negative, radially decreasing $\eta \in C^{\infty}_c(B_1)$ with $\int_{\R^N} \rho = 2$.
Then when $1 \leq p < \infty$ and $m \leq p' = p/(p-1)$ (which is no restriction on $m$ if $p = 1$) we have $\|\nabla v_m\|_p \lc 2^{m/p'} \lc 1$, while $E_{1,-1/p}[v_m]  \supseteq \{|x| \leq 1-2^{-m}, 1 + 2^{-m} \leq |y| \leq 2\}$, and hence
\[
\begin{split}
\nu_{-1}(E_{1,-1/p}[v_m]) 
\geq &\int_{|x| \leq 1-2^{-m}} \int_{1 + 2^{-m} \leq |y| \leq 2} |x-y|^{-1-N} \dif x \dif y \\
\geq & c_N \int_{|x| \leq 1-2^{-m}} (1 + 2^{-m} - |x|)^{-1} - (2-|x|)^{-1} \dif x \geq c_N' m.
\end{split}
\]
This proves both (i) and (ii) of \cref{proposition_Mp_gamma_-1_0_1} in the case $\ga = -1$.
\qed

\subsection{The case $-1 < \gamma < 0$: Examples of Cantor-Lebesgue type on the real line.}\label{sec:Cantor}
We now discuss some examples related to self-similar Cantor sets of dimension $\beta=1+\gamma$. 
Recall the definition of $\nu_\ga$, $Q_\ga$ in \eqref{eq:nugamma-definition}, \eqref{Qbetadef} and observe the behavior under dilations: 
\Be\label{eq:mubeta-dil} 
\nu_{\gamma}(tE)= t^{1+\gamma}  \nu_{\gamma}(E) \,.
\Ee
We  have
\begin{lemma} \label{lem:wk} 
Let $-1<\ga<0$.
There exist   constants $c_\ga>0$,   $C_\ga>0$, and a  sequence of functions $g_m \in C^\infty(\bbR)$  with $g_m(x)=0$ for $x\leq 0$ and $g_m(x)=1$ for $x\ge 1$, such that for all $1 \leq p < \infty$,
  \Be \label{eq:derivgm} 
  \|g_m'\|_p \le c_\ga 2^{\frac{m |\ga| }{1+\ga}( 1-\frac 1p)}  
  \Ee  
 and if $m-1 \leq \frac{\ga+1}{|\ga|} \frac{p}{p-1}$, then
 \begin{align} \label{eq:Qbeta-lowerbd_p}
 \nu_{\ga}\big(\{ (x,y) \in [0,1]^2: |Q_{\ga/p} g_m (x,y) | > \tfrac{1}{4} \}\big)  \ge m/C_\ga\,.
 \end{align}
 \end{lemma}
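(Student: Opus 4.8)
The idea is to take $g_m$ to be a smoothed, truncated‑at‑generation‑$m$ version of the Cantor--Lebesgue function of a self‑similar Cantor set of dimension $1+\gamma$. Put $\rho\defeq 2^{-1/(1+\gamma)}$; since $-1<\gamma<0$ we have $\rho\in(0,1/2)$ and, what drives all the arithmetic, $\rho^{\,1+\gamma}=\tfrac12$ and $2\rho=2^{-|\gamma|/(1+\gamma)}$. Let $C\subset[0,1]$ be the attractor of $x\mapsto\rho x$ and $x\mapsto\rho x+(1-\rho)$, and for $k\ge0$ let $\mathcal I_k$ be the family of the $2^k$ generation‑$k$ intervals: each has length $\rho^k$, they are pairwise disjoint, and every $J\in\mathcal I_{k-1}$ contains exactly two members of $\mathcal I_k$ — a left child $J^-$ and a right child $J^+$ — separated by a central gap of length $\rho^{k-1}(1-2\rho)$. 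Fix a nondecreasing $\phi\in C^\infty(\mathbb R)$ with $\phi\equiv0$ on $(-\infty,\tfrac14]$ and $\phi\equiv1$ on $[\tfrac34,\infty)$. I would define $g_m$ to equal $0$ to the left of $[0,1]$, $1$ to the right, to be constant on every gap of generation $\le m$, and on each $I=[c,c+\rho^m]\in\mathcal I_m$ to equal $g_m(c)+2^{-m}\phi\bigl((x-c)/\rho^m\bigr)$. Because $\phi$ is flat at both ends, $g_m$ is locally constant near every endpoint of every generation‑$m$ interval, hence $g_m\in C^\infty(\mathbb R)$; it is nondecreasing, increases by exactly $2^{-m}$ across each $I\in\mathcal I_m$, vanishes on $(-\infty,0]$, and equals $1$ on $[1,\infty)$. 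The key structural fact is self‑similarity: for $J=[c,c+\rho^k]\in\mathcal I_k$ with $k\le m$ one has $g_m(c+s\rho^k)=g_m(c)+2^{-k}g_{m-k}(s)$ for $s\in[0,1]$.

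For \eqref{eq:derivgm}: $g_m'$ is supported on the central halves of the $2^m$ intervals of $\mathcal I_m$, a set of measure $\tfrac12(2\rho)^m$, with $\|g_m'\|_\infty\le\|\phi'\|_\infty(2\rho)^{-m}$, so $\|g_m'\|_p^p\le\tfrac12\|\phi'\|_\infty^p(2\rho)^{m(1-p)}=\tfrac12\|\phi'\|_\infty^p\,2^{m|\gamma|(p-1)/(1+\gamma)}$, which is \eqref{eq:derivgm} with $c_\gamma=\|\phi'\|_\infty$.

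For \eqref{eq:Qbeta-lowerbd_p} I would argue one generation at a time. Fix $k\in\{1,\dots,m\}$ and $J\in\mathcal I_{k-1}$; write $a\defeq g_m(\text{left endpoint of }J)$, so that $g_m$ runs from $a$ to $a+2^{-k}$ over $J^-$, stays $a+2^{-k}$ on the gap, and runs from $a+2^{-k}$ to $a+2^{-(k-1)}$ over $J^+$. Using self‑similarity together with the fact that $g_{m-k}\equiv\tfrac12$ on the central gap $[\rho,1-\rho]$ of $[0,1]$ (so $\{g_{m-k}\le\tfrac12\}\supseteq[0,1-\rho]$ and $\{g_{m-k}\ge\tfrac12\}\supseteq[\rho,1]$), the sets
\[
A_J\defeq\{x\in J^-: g_m(x)\le a+\tfrac14 2^{-(k-1)}\},\qquad B_J\defeq\{y\in J^+: g_m(y)\ge a+\tfrac34 2^{-(k-1)}\}
\]
are subintervals of $J^-$, respectively $J^+$, of length $\ge(1-\rho)\rho^k$, sitting at the left and right ends of $J$. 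For $x\in A_J$, $y\in B_J$ one has $|g_m(x)-g_m(y)|\ge 2^{-k}$ and $\rho^{k-1}(1-2\rho)\le|x-y|\le\rho^{k-1}$; since $1+\gamma/p>0$ this gives $|Q_{\gamma/p}g_m(x,y)|\ge 2^{-k}(\rho^{k-1})^{-(1+\gamma/p)}=2^{-1+(k-1)(\mu-1)}\ge\tfrac12>\tfrac14$, where $\mu\defeq(1+\gamma/p)/(1+\gamma)\ge1$. Hence $A_J\times B_J\subseteq\{(x,y)\in[0,1]^2:|Q_{\gamma/p}g_m(x,y)|>\tfrac14\}$. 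Because $|x-y|\le\rho^{k-1}$ and $\gamma-1<0$, and $\rho^{(k-1)(1+\gamma)}=2^{-(k-1)}$,
\[
\nu_\gamma(A_J\times B_J)\ \ge\ (\rho^{k-1})^{\gamma-1}\,|A_J|\,|B_J|\ \ge\ (1-\rho)^2\rho^2\,2^{-(k-1)}.
\]
These product sets are pairwise disjoint: for fixed $k$ the $J$'s are disjoint, while if $k'>k$ and $J'\in\mathcal I_{k'-1}$ lies inside $J\in\mathcal I_{k-1}$ then $J'\subseteq J^-$ or $J'\subseteq J^+$, so $A_{J'}\times B_{J'}$ lies in $J^-\times J^-$ or $J^+\times J^+$ and is therefore disjoint from $A_J\times B_J\subseteq J^-\times J^+$. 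Summing over $\mathcal I_{k-1}$ ($2^{k-1}$ intervals) and then over $k=1,\dots,m$ gives $\nu_\gamma(\{|Q_{\gamma/p}g_m|>\tfrac14\}\cap[0,1]^2)\ge m(1-\rho)^2\rho^2$, which is \eqref{eq:Qbeta-lowerbd_p} with $C_\gamma=((1-\rho)^2\rho^2)^{-1}$. (In fact this argument needs no restriction on $m$; the hypothesis $m-1\le\tfrac{\gamma+1}{|\gamma|}\tfrac{p}{p-1}$ is included only because it is what makes \eqref{eq:derivgm} keep $\|g_m'\|_p$ bounded in the intended application to \cref{proposition_Mp_gamma_-1_0_1}.)

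The one point demanding care is to arrange, simultaneously, that $g_m$ is genuinely $C^\infty$, that the renormalization identity $g_m(c+s\rho^k)=g_m(c)+2^{-k}g_{m-k}(s)$ holds on the nose, and that the level sets $A_J,B_J$ really have measure of order $\rho^k$ rather than being crushed by the generation‑$m$ smoothing; a single fixed profile $\phi$ that is flat near $0$ and near $1$ handles all three at once, after which everything reduces to the scaling identities $\rho^{1+\gamma}=\tfrac12$ and $2\rho=2^{-|\gamma|/(1+\gamma)}$.
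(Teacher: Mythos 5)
Your proof is correct, and it takes a genuinely different route from the paper's. The paper works with essentially the same Cantor--Lebesgue family (built by the recursion $g_{m+1}(x)=\tfrac12 g_m(x/\rho)+\tfrac12 g_m\bigl(1-\tfrac{1-x}{\rho}\bigr)$, $\rho=2^{-1/(1+\ga)}$) and the same computation for \eqref{eq:derivgm}, but proves \eqref{eq:Qbeta-lowerbd_p} by a recursion on the superlevel-set measures $A_{m,\la}=\nu_\ga(\{(x,y)\in[0,1]^2:|Q_{\ga/p}g_m(x,y)|>\la\})$: the diagonal squares $[0,\rho]^2$ and $[1-\rho,1]^2$ each contribute $\tfrac12 A_{m,s\la}$ with $s=2\rho^{1+\ga/p}$, the off-diagonal rectangle $[0,\rho]\times[1-\rho,1]$ contributes a fixed amount, and iterating yields $m$ cross terms at thresholds $\tfrac14 s^{m-j}$; the hypothesis $m-1\le\tfrac{\ga+1}{|\ga|}\tfrac{p}{p-1}$ is invoked there to keep those thresholds below $\tfrac12$. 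You instead exhibit directly, for each generation $k$, the $2^{k-1}$ pairwise disjoint rectangles $A_J\times B_J$ on which the quotient exceeds $\tfrac12$, each of $\nu_\ga$-measure at least $(1-\rho)^2\rho^2\,2^{-(k-1)}$, and sum over $k$. Both arguments are ``one fixed contribution per generation'' counts, but yours makes transparent that \eqref{eq:Qbeta-lowerbd_p} holds for \emph{all} $m$, the restriction on $m$ being needed only to keep $\|g_m'\|_{L^p}$ bounded in the application to \cref{proposition_Mp_gamma_-1_0_1} --- a genuine (if modest) sharpening. Two minor points. First, your bound $|A_J|,|B_J|\ge(1-\rho)\rho^k$ rests on $g_{m-k}\equiv\tfrac12$ on $[\rho,1-\rho]$, which holds for $m-k\ge1$ but not automatically for $k=m$, where $g_0=\phi$; there the flat parts of $\phi$ still give $|A_J|,|B_J|\ge\rho^m/4$ (or one simply drops the generation $k=m$), so only the constant $C_\ga$ changes. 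Second, the inequality $2^{-k}(\rho^{k-1})^{-(1+\ga/p)}\ge\tfrac12$, i.e.\ $(1+\ga/p)/(1+\ga)\ge1$, is exactly where $\ga<0$ enters, and it is worth flagging as the crux of the estimate.
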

 
 \begin{proof}
For $-1 < \ga < 0$ let \begin{equation}\label{rhodef} \rho=2^{-\frac{1}{1+\ga}} \end{equation}
so that $0<\rho<1/2$.
We construct \(g_m\) such that its derivative is supported on the \(m\)--th step of the construction of symmetric Cantor sets of dimension \(\beta=1+\ga=\frac{\log 2}{\log(1/\rho)}\), with an equal variation on each of its \(2^m\) components \cite{mattila}*{ch.\ 8.1}.

Let \(g_0 \in C^\infty (\Rset)\) be such that \(0 \le g_0 \le 1\), \(g_0 (x) = 0\) for \(x \le \rho\) and \(g_0 (x) = 1\) for \(x \ge 1 - \rho\). Set for \(m \in \Nset\),
\[
 g_{m + 1} (x) \defeq \tfrac{1}{2} g_m \bigl(\tfrac{x}{\rho}\bigr)
 + \tfrac{1}{2} g_m \bigl(1 - \tfrac{1 - x}{\rho}\bigr).
\]
Since \(\rho < 1/2\), we have for \(p \in [1,\infty)\),  $\|g_{m+1}'\|_{L^p(\bbR)}^p = 2\times (2\rho)^{-p} \rho \|g_m'\|_{L^p(\bbR)}^p$ and thus 
\[
\norm{g_m'}_{L^p (\Rset)} =  (2\rho)^{(\frac 1p-1)m} \norm{g_0'}_{L^p (\Rset)}
=2^{\frac{m|\ga|}{\ga+1}(1-\frac 1p)} \norm{g_0'}_{L^p (\Rset)}.
\] 

Fix now $1 \leq p < \infty$, and for $m \in \Nset$, $\la > 0$ define  
\[
A_{m,\la} \coloneqq \nu_{\ga} \brk{\set{(x, y) \in \intvc{0}{1}^2 :\abs{Q_{\ga/p} g_{m} (x, y)}  > \lambda}}.
\]
Our goal is to estimate $A_{m,1/4}$, which we do by deriving a recursive estimate for $A_{m,\la}$.
We have the decomposition 
\begin{equation}
\label{eq_ja5diiPohth1aLahtie3Idua}
\begin{split}
A_{m+1,\la}&\ge \, 
\nu_{\ga} \brk{\set{(x, y) \in \intvc{0}{\rho}^2:\abs{Q_{\ga/p} g_{m+1} (x, y)} > \la}}\\
&+\,
\nu_{\ga} \brk{\set{(x, y) \in \intvc{1- \rho}{1}^2:\abs{Q_{\ga/p} g_{m+1} (x, y)} > \la}}\\
&+\,
\nu_{\ga} \brk{\set{(x, y) \in \intvc{0}{\rho} \times \intvc{1-\rho}{1} :\abs{Q_{\ga/p} g_{m+1} (x, y)} > \la}}.
\end{split}
\end{equation}
Using the definition of $g_{m+1}$, \eqref{rhodef} and \eqref{eq:mubeta-dil}, we compute the first term in the right-hand side of \eqref{eq_ja5diiPohth1aLahtie3Idua} as
\begin{equation}
\label{eq_Aejai1ohchoore0eegee9uoB}
\begin{split}
 &\nu_{\ga} \brk{\set{(x, y) \in \intvc{0}{\rho}^2 : \abs{Q_{\ga} g_{m + 1} (x, y)}  > \la }}\\
 &= \nu_{\ga} \brk{\set{(\rho w, \rho z): (w,z) \in \intvc{0}{1}^2, \, \abs{Q_{\ga} g_{m} (w, z)}  >  2 \rho^{1+\frac{\ga}{p}} \la }}\\
 &= \rho^{\ga+1} \nu_{\ga} \brk{\set{(w, z) \in \intvc{0}{1}^2 :  \abs{Q_{\ga} g_{m} (w, z)}  > 2^{\frac{|\ga|}{p'(\ga+1)}}\la }} = \tfrac{1}{2} A_{m,s\la}
\end{split}
\end{equation}
where $s := 2 \rho^{1+\frac{\ga}{p}} = 2^{\frac{|\ga|}{p'(\ga+1)}}$,
and similarly the second term as
\begin{equation}
\label{eq_iphein1ieshua9Oon9Niequ1}
\begin{split}
 &\nu_{\ga} \brk{\set{(x, y) \in \intvc{1-\rho}{1 }^2  :  \abs{Q_{\ga} g_{m + 1} (x, y) } > \tfrac{1}{2} }}
  = \tfrac{1}{2} A_{m,s\la}.
 \end{split}
\end{equation}
Thus
\[
A_{m+1,\la} \geq A_{m,s\la} + \nu_{\ga} \brk{\set{(x, y) \in \intvc{0}{\rho} \times \intvc{1-\rho}{1} :\abs{Q_{\ga/p} g_{m+1} (x, y)} > \la}},
\]
which iterates to give
\[
A_{m,1/4} \ge A_{0,s^m/4} + \sum_{j=1}^{m}  \nu_{\ga} \brk{\set{(x, y) \in \intvc{0}{\rho} \times \intvc{1-\rho}{1} :\abs{Q_{\ga/p} g_{j} (x, y)} > \tfrac{1}{4} s^{m-j}  }}.
\]
We drop the first term, and note that as long as $m-1 \leq \frac{\ga+1}{|\ga|} \frac{p}{p-1}$, we have $\frac{1}{4} s^{m-j} 
\leq \frac{1}{2}$ for all $j = 1, \dots, m$. Moreover, for every $x \in [0,\rho^2] \times [1-\rho^2,1]$ and every $j \geq 1$, we have $g_j(x) \leq 1/4$ and $g_j(y) \geq 3/4$, so $\abs{Q_{\ga/p} g_{j} (x, y)} > \frac{1}{2}$. Thus we obtain the desired conclusion
\[
A_{m,1/4} \geq m \nu_{\ga}([0,\rho^2] \times [1-\rho^2,1]) = m / C_{\ga}.
\]
\end{proof}

\subsection{Conclusion of the proof of  \cref{proposition_Mp_gamma_-1_0_1}.}
\label{sec:counterex-higherdim}

We continue with the case $-1<\gamma<0$.
 Let $\eta_1\in C_c^\infty(\bbR)$ supported in $(-1,2)$ such that $\eta_1(s)=1$ on $(-1/2,3/2)$ and $0 \le \eta_1(s) \le 1$ for all $s\in \bbR$.

We split $x=(x_1,x')$ with $x'\in \bbR^{N-1}$,  where the variable $x'$ should simply be dropped in the case $N=1$. Set $\eta(x)=\prod_{i=1}^N \eta_1(x_i)$  and define
\Be\label{eq:umdef} u_m(x_1,x') = 16 g_m(x_1)\eta(x)\Ee where $g_m$ is as in \cref{lem:wk}.
Then $u_m \in C^{\infty}_c(\R^N)$, and if $1 \leq p < \infty$ and $m-1 \leq \frac{\ga+1}{|\ga|} \frac{p}{p-1}$ we have $\|\nabla u_m\|_p \lesssim 1$. Both parts of \cref{proposition_Mp_gamma_-1_0_1} will follow, if we can prove that under the same hypotheses on $p$ and $m$, we have
\begin{equation} \label{lower-bound-N}
\nu_{\ga}(E_{1,\ga/p}[u_m]) \geq c(N,\ga) m - C(N,\ga)^p.
\end{equation}
We aim to reduce to the one-dimensional situation in  \cref{lem:wk} and split 
\[
Q_{\ga/p}u_m(x,y) =16\eta(x) \frac{g_m(x_1) -g_m(y_1) }{|x-y|^{1+\frac{\ga}{p}} } + 16 g_m(y_1) \frac{ \eta(x)-\eta(y)}{|x-y|^{1+\frac{\ga}{p}} } = I_m(x,y)+I\!I_m(x,y)
\]
so that 
\begin{align}\notag
\nu_\ga(E_{1,\ga/p}[u_m])  
&\ge \iint\limits_{\substack {x_1,y_1 \in [0,1] \\ |I_m(x,y)+I\!I_m(x,y)| > 1}}|x-y|^{\ga-N} \dif x \dif y 
\\
\label{two-terms} 
&\ge \iint\limits_{\substack{x \in [0,1]^N, y_1 \in [0,1] \\ |x_1-y_1|\ge |x'-y'| \\ |I_m(x,y)|> 2}}|x-y|^{\ga-N} \dif x \dif y 
-\iint\limits_{\substack{|I\!I_m(x,y)| >  1} }|x-y|^{\ga-N} \dif x \dif y .
\end{align} 

Clearly if $B_2$ is the ball in $\bbR^N$ of radius $2$ centered at the origin then   \[|I\!I_m(x,y)|\leq c_N |x-y|^{-\frac{\ga}{p}} (\bbone_{B_2}(x)+\bbone_{B_2}(y)) \] and it follows immediately (since $-\ga>0$) that 
\[ 
\iint\limits_{\substack{|I\!I_m(x,y)|> 1}}|x-y|^{\ga-N} \dif x \dif y  \le |\ga|^{-1} C(N)^p.
\]

For the first term in \eqref{two-terms}, we prove a lower bound and estimate by integrating in $y'$
\begin{align*}
\iint\limits_{\substack{x \in [0,1]^N, y_1 \in [0,1] \\ |x_1-y_1|\ge |x'-y'| \\|I_m(x,y)|> 2}}|x-y|^{\ga-N} \dif x \dif y 
&\ge 
\iint\limits_{\substack{x \in [0,1]^N, y_1 \in [0,1] \\ |x_1-y_1|\ge |x'-y'| \\ 
\frac{|16 g_m(x_1)- 16 g_m(y_1)|} {|x_1-y_1|^{1+\frac{\ga}{p}}}> 4}}|x-y|^{\ga-N} \dif x \dif y 
\\
\\&\ge c_N 
\iint\limits_{\substack{x_1,y_1 \in [0,1] \\ 
|Q_{\ga/p}g_m(x_1,y_1)| > \tfrac 14}}|x_1-y_1|^{\ga-1} \dif x_1 \dif y_1,
\end{align*} 
but by \cref{lem:wk} the last expression is bounded below for large $m$  by $c_N m/C_{\ga}$ under our hypothesis on $m$. 
This concludes the proof of \eqref{lower-bound-N}. 
\qed

For later purposes, note the inequality \eqref{two-terms} (with $p = 1$) and the argument that follows proved also that for all sufficiently large $m > m(N,\ga)$, we have
\Be \label{eq:lowbd[0,1]}
\nu_{\ga}(E_{1,\ga}[u_m] \cap ([0,1] \times \R^{N-1})^2) \geq c(N,\ga) m.
\Ee

\subsection{Examples related to Theorems \ref{theorem_weighted_reverse} and \ref{thm:counterexamples}}
We now consider the limit \eqref{eq:lim_weighted_neg} in the range $-1\le \ga<0$ and provide counterexamples for cases where $u$ is no longer required to be  a $C^\infty_c$ function. 
The following proposition covers part (i) of \cref{thm:counterexamples}.
\begin{proposition} \label{prop:wknolimits} 
Let $-1\le \gamma<0$. Let $s\mapsto \omega(s)$ be any decreasing function on $[0,\infty)$ with $\omega(0)\le 1$ and $\omega(s)>0$ for all $s\ge 0$.
Then there exists a $C^\infty$ function $u\in \dot W^{1,1}(\bbR^N)$ such that 
\Be\label{eq:log-decay} 
|u(x)|\le C \omega(|x|) \text{ for all $x\in \bbR^N$ }
\Ee
and \Be \label{zero-limit}
\lim_{\la\searrow 0} \la \nu_\gamma(E_{\la,\gamma}[u]) =\infty .
\Ee
\end{proposition}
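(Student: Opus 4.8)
The plan is to obtain $u$ as an infinite superposition of dampened translates of the functions from \cref{proposition_Mp_gamma_-1_0_1}, placed in regions running off to infinity so that their contributions to $\nu_\ga(E_{\la,\ga}[u])$ accumulate at every small height $\la$, with amplitudes kept small enough to respect the majorant $\omega$. For the building blocks I take $w_m$ to be $v_m$ of \eqref{eq:vmdef} when $\ga=-1$ and $u_m$ of \eqref{eq:umdef} when $-1<\ga<0$. In both cases $w_m\in C^\infty_c(\bbR^N)$ and, by \cref{sec:proof-ga=1} with $p=1$ (in which the pairs $(x,y)$ that are counted range over $B_2\times B_2$), respectively by the proof of \eqref{eq:lowbd[0,1]} (in which they range over $[-1,2]^N\times[-1,2]^N$), there are a fixed cube $Q_0\subset\bbR^N$, constants $C_0,A_0,c_0>0$, and $m_0\in\bbN$, all independent of $m$, with $\supp w_m\subset Q_0$,
\[
\norm{w_m}_{L^\infty(\bbR^N)}\le C_0,\qquad\norm{\nabla w_m}_{L^1(\bbR^N)}\le A_0,\qquad\nu_\ga\bigl(E_{1,\ga}[w_m]\cap(Q_0\times Q_0)\bigr)\ge c_0\,m\quad(m\ge m_0).
\]
The crucial feature is that the defect $c_0m$ grows in $m$ while $\norm{w_m}_\infty$ and $\norm{\nabla w_m}_{L^1}$ stay bounded.

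\textbf{The superposition.} Put $D_0\defeq\sup\{\abs z:z\in Q_0\}$, and choose centres $c_k\in\bbR^N$ with $\abs{c_k}\to\infty$ and $\abs{c_k-c_j}>2D_0$ for $k\ne j$, so that the cubes $c_k+Q_0$ are pairwise disjoint. Since $\omega>0$ on $[0,\infty)$, the numbers $a_k\defeq\min\bigl\{2^{-k},\,C_0^{-1}\omega(\abs{c_k}+D_0)\bigr\}$ are strictly positive and satisfy $\sum_k a_k\le1$; I then put $m_k\defeq\max\bigl\{m_0,\lceil k/a_{k+1}\rceil\bigr\}$ and define
\[
u\defeq\sum_{k\ge1}a_k\,w_{m_k}(\cdot-c_k).
\]
Because the supports $c_k+Q_0$ are pairwise disjoint and locally finite, $u\in C^\infty(\bbR^N)$; moreover $\norm{\nabla u}_{L^1(\bbR^N)}\le A_0\sum_k a_k<\infty$, so $u\in\dot W^{1,1}(\bbR^N)$; and on $c_k+Q_0$ one has $\abs{u}\le C_0a_k\le\omega(\abs{c_k}+D_0)\le\omega(\abs x)$ since $\omega$ is nonincreasing, while $u\equiv0$ off $\bigcup_k(c_k+Q_0)$, which is \eqref{eq:log-decay}.

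\textbf{Divergence of $\la\,\nu_\ga(E_{\la,\ga}[u])$.} Fix $\la>0$. On $(c_k+Q_0)\times(c_k+Q_0)$ we have $u=a_kw_{m_k}(\cdot-c_k)$, hence $Q_\ga u(x,y)=a_k\,Q_\ga w_{m_k}(x-c_k,y-c_k)$; so, by translation invariance of $\nu_\ga$ and monotonicity of the superlevel sets in the height, for every $k$ with $a_k\ge\la$,
\[
\nu_\ga\bigl(E_{\la,\ga}[u]\cap(c_k+Q_0)^2\bigr)=\nu_\ga\bigl(E_{\la/a_k,\ga}[w_{m_k}]\cap(Q_0\times Q_0)\bigr)\ge\nu_\ga\bigl(E_{1,\ga}[w_{m_k}]\cap(Q_0\times Q_0)\bigr)\ge c_0m_k.
\]
Since the sets $(c_k+Q_0)^2\subset\bbR^{2N}$ are pairwise disjoint, with $K(\la)\defeq\max\{k:a_k\ge\la\}$ we obtain, for $\la\le a_1$,
\[
\nu_\ga\bigl(E_{\la,\ga}[u]\bigr)\ \ge\ c_0\sum_{k:\,a_k\ge\la}m_k\ \ge\ c_0\,m_{K(\la)}\ \ge\ c_0\,\frac{K(\la)}{a_{K(\la)+1}},
\]
using $m_{K(\la)}\ge K(\la)/a_{K(\la)+1}$. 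As $a_{K(\la)+1}<\la$ by the definition of $K(\la)$, this yields $\la\,\nu_\ga(E_{\la,\ga}[u])\ge c_0K(\la)$, and $K(\la)\to\infty$ as $\la\searrow0$ because $a_k\searrow0$; this is \eqref{zero-limit}.

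\textbf{The main obstacle.} There is essentially no analytic difficulty: the off-diagonal pairs — one point in $c_j+Q_0$, the other in $c_k+Q_0$ with $j\ne k$ — only enlarge $E_{\la,\ga}[u]$ and are simply discarded in the lower bound. The one structural ingredient that must be in place is the \emph{uniformity in $m$} of $\norm{w_m}_\infty$ and $\norm{\nabla w_m}_{L^1}$ furnished by \cref{lem:wk} (and by the computation in \cref{sec:proof-ga=1} when $\ga=-1$): precisely this lets $m_k\to\infty$ force the divergence while the amplitudes $a_k$ are pushed down as far as the prescribed decay $\omega$ requires, so that $u$ lies in $\dot W^{1,1}$ and obeys \eqref{eq:log-decay} at the same time.
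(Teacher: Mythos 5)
Your construction is correct, but it takes a genuinely different route from the paper's. The paper builds $u$ as a sum of \emph{dilated} copies $f_{m(n)}(x/R_n)$ with $R_n=2^{2n}\to\infty$, calibrating three coupled parameters $(R_n,\la_n,m(n))$ via \eqref{eq:choice-of-global-parameters} and using the exact scaling law $\nu_\ga(tE)=t^{1+\ga}\nu_\ga(E)$ of \eqref{eq:mubeta-dil} to convert the superlevel set of $u$ at height $n^{-2}\la_n$ into the superlevel set of $f_{m(n)}$ at height exactly $1$. You instead use only \emph{translations} of the unit-scale blocks, with disjoint supports $c_k+Q_0$, and replace the exact rescaling identity by the one-sided monotonicity $E_{1,\ga}[w_{m_k}]\subseteq E_{\la/a_k,\ga}[w_{m_k}]$ valid whenever $\la\le a_k$; the disjointness of the cubes $(c_k+Q_0)^2$ in $\R^{2N}$ then lets you sum the contributions. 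This is simpler — it dispenses with the dilation bookkeeping entirely and treats $\ga=-1$ and $-1<\ga<0$ uniformly once the three block properties (fixed support, uniformly bounded $\norm{w_m}_\infty$ and $\norm{\nabla w_m}_{L^1}$, defect $\gtrsim m$ at height $1$) are in place, and those properties are indeed what \cref{sec:proof-ga=1}, \cref{lem:wk} and the proof of \eqref{eq:lowbd[0,1]} provide at $p=1$ (where the constraint $m-1\le\frac{\ga+1}{|\ga|}\frac{p}{p-1}$ is vacuous). What the paper's dilation scheme buys in exchange is flexibility that your translation scheme lacks: the same template with $R_n=2^{-2n}\to 0$ produces the \emph{compactly supported} examples of \cref{prop:comp-supp-example} with $\nu_\ga(E_{\la,\ga}[u])=\infty$ for all $\la$, which cannot be achieved by spreading unit-scale bumps to infinity. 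For the proposition as stated, your argument is complete; the only points worth making explicit are that local finiteness of the supports (from $|c_k|\to\infty$) is what gives $u\in C^\infty$, and that the restriction of the lower bound \eqref{eq:lowbd[0,1]} to a fixed bounded cube requires the observation, which you correctly make, that the pairs produced in its proof lie in $[-1,2]^N\times[-1,2]^N$.
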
 

 \begin{proof}
 We consider the case $-1<\ga<0$.  
 Let $u_m \in C^{\infty}_c(\R^N)$ be as in \eqref{eq:umdef} and define 
\begin{equation}\label{eq:fmdef} 
f_m(x)= u_m(x_1-2,x') 
\Ee 
so that $f_m(x)=0$ if $x_1\notin [1,4]$.
   Let, for  $n\in \bbN$, 
   \Be \label{eq:choice-of-global-parameters} 
   \text{ $R_n=2^{2n}$, \ $\la_n= R_n^{-(N+\gamma)} \omega(R_{n+1})$, \ $m(n)\ge 4\frac{\la_n}{\la_{n+1}} \omega(R_{n+1})^{-1} n^3$.}
   \Ee
  We also assume $m(n) > m(N,\ga)$ so that by \eqref{eq:lowbd[0,1]} in \cref{sec:counterex-higherdim},
   \Be
   \label{Qbeta-fm-bd}
   \nu_\gamma(\{(x,y): x_1,y_1 \in [2,3], | Q_{\gamma} f_{m(n)} (x,y)|> 1 \}) \ge c(N,\ga) m(n)
   \Ee
   for all $n \in \Nset$.
   Finally let 
 \Be\label{eq:def-of-u} 
 u(x) =\sum_{n=2}^\infty \frac{\omega(R_{n+1}) }{R_n^{N-1}n^2} f_{m(n)}\Bigl(\frac{x}{R_n}\Bigr).
 \Ee
  Since $\|f_{m}\|_{\dot W^{1,1} } \le C$, and $\omega$ is bounded,
    it is easy to see that the sum converges in $\dot W^{1,1} (\Rset^N)$, and that $u$ is in  $\dot W^{1,1}(\Rset^N)$. Also,  the supports of $f_{m(n)}(R_n^{-1} \cdot)$, namely $[R_n, 4R_n] \times [-4R_n, 4R_n]^{N-1}$, are disjoint as $n$ varies, so clearly $u\in C^\infty(\bbR^N)$.
  Since    $\|f_{m}\|_{L^\infty} \le C$,  we have 
  \[
  |u(x)|\le \om(R_{n+1}) R_n^{-(N-1)}n^{-2} \quad \text{for $|x|\ge R_n$},
  \]
   so  $|u(x)|\le C'|x|^{-N+1}\omega(|x|) $ for $|x| \ge 2$. In particular $|u(x)|\le C\om(|x|)$.

  For $\la\in ((n+1)^{-2}\la_{n+1},n^{-2}\la_n]$ we estimate
  \begin{align*}
\la \nu_\ga(E_{\la,\gamma}[u]) 
\ge (n+1)^{-2} \la_{n+1} \nu_\ga(E_{n^{-2}\la_n,\gamma}[u] ) 
\ge \frac{\la_{n+1}}{4\la_n}\,  n^{-2}\la_{n} \nu_\ga( \cE_n)
  \end{align*}
where $\cE_n\coloneqq E_{n^{-2}\la_n,\ga}[u] \cap ([2R_n,3R_n] \times \R^{N-1})^2$. Moreover, for $(x,y) \in ([2R_n,3R_n] \times \R^{N-1})^2$, we have
 \[ 
 u(x)-u(y)= R_n^{1-N}n^{-2}\om(R_{n+1}) (f_{m(n)}(R_n^{-1} x)-f_{m(n)}(R_n^{-1} y) )
 \] 
 so
 \[
 |Q_{\ga}u(x,y)| > n^{-2}\la_n \Longleftrightarrow \frac{|f_{m(n)}(R_n^{-1} x)-f_{m(n)}(R_n^{-1}y)|}{|R_n^{-1} x - R_n^{-1} y|^{1+\ga}} > \frac{R_n^{N+\ga}}{\omega(R_{n+1})} \la_n = 1
 \]
 where the last equality follows from \eqref{eq:choice-of-global-parameters}.
 Hence  rescaling using \eqref{eq:mubeta-dil} yields
   \begin{align}
n^{-2} \la_{n} \nu_\ga(\cE_n) &= n^{-2} \la_n R_n^{\ga+N} \nu_\gamma(\{(x,y): x_1,y_1 \in [2,3], | Q_{\gamma} f_{m(n)} (x,y)|> 1 \}) \label{eq:rescale} \\ 
  &\ge c(N,\ga)  m(n) \omega(R_{n+1}) n^{-2} \notag
\end{align} 
  with $c(N,\ga)>0$, by  \eqref{Qbeta-fm-bd}.
  Thus we have shown
  \begin{align*}\inf_{\la\in ((n+1)^{-2}\la_{n+1},n^{-2}\la_n]} \la \nu_\ga(E_{\la,\gamma}[u])  
&\ge c(N,\ga) \frac{\la_{n+1}}{4\la_n} \omega(R_{n+1}) m(n)  n^{-2} \ge c(N,\ga) n\,
\end{align*} where for the last inequality we have used  our assumption \eqref{eq:choice-of-global-parameters} on $m(n)$. The assertion follows for $-1<\ga<0$.

   Finally consider the case $\gamma=-1$. We now choose 
   $v_m$ as in \eqref{eq:vmdef} and 
   \Be \label{eq:choice-of-global-parameters-minus1} 
   \text{ $R_n=2^{2n}$, \ $\la_n= R_n^{-(N-1)} \omega(R_{n+1})$, \ $m(n)\ge 4\frac{\la_n}{\la_{n+1}} \frac{n^3}{\omega(R_{n+1})}$.}
   \Ee
   In analogy to \eqref{eq:def-of-u} we now use
   \Be\label{eq:def-of-u-ga=1} 
   u(x) =\sum_{n=2}^\infty \frac{\om(R_{n+1})}{R_n^{N-1}n^2} v_{m(n)}(\frac{x}{R_n}) 
   \Ee
   Since $\om$ is bounded it is immediate  that $u\in \dot W^{1,1}(\Rset^N)$ and also that $|u(x)|\lc \om(|x|)$. We need to check that $\la \nu_{-1}(E_{\la,-1}[u])\to\infty$ as $\la\to 0^+$. 
   If $|x| \leq R_n(1-2^{m(n)})$ and $|y| \ge R_n(1+2^{m(n)})$, then 
   \[
   u(x)-u(y) \geq \frac{\omega(R_{n+1})}{R_n^{N-1} n^2} v_{m(n)}(\frac{x}{R_n})=2\frac{\om(R_{n+1})}{R_n^{N-1} n^2} = 2 n^{-2} \lambda_n > n^{-2} \lambda_n
   \]
   so $(x,y) \in E_{n^{-2}\la_n,-1}[u]$. Hence  we get 
   \begin{align*}
       n^{-2} \la_n \nu_{-1}(E_{n^{-2} \la_n,-1}[u]) &\ge n^{-2} \la_n \iint_{\substack{|x| \leq R_n(1-2^{m(n)}) \\ |y| \geq R_n(1+2^{m(n)})}} |x-y|^{-1-N} \dif x \dif y\\
       &\ge n^{-2} \la_n R_n^{N-1} \iint_{\substack{|x| \leq 1-2^{m(n)} \\|y| \geq 1+2^{m(n)}}} |x-y|^{-1-N} \dif x \dif y \\
       &\geq c_N m(n) \om(R_{n+1}) n^{-2} 
   \end{align*}
   (using \eqref{eq:choice-of-global-parameters-minus1} in the last  inequality). This together with our assumption on $m(n)$ imply that $\inf_{\la\in ((n+1)^{-2}\la_{n+1},n^{-2}\la_n]} \la \nu_{-1}(E_{\la,-1}[u]) \geq c_N n \to \infty$ when $n \to \infty$, as desired.
   \end{proof}
 
The next proposition is relevant for part (ii) of 
\cref{thm:counterexamples}.
\begin{proposition} \label{prop:comp-supp-example}
Suppose $-1 \leq \ga < 0$. Then there exists a compactly supported $u\in  W^{1,1}(\bbR^N)$ such that $u$ is $C^\infty$ for $x\neq 0$, 
\Be \label{eq:log-zero} 
|u(x)|\le \frac{C}{|x|^{N-1}[\log (2+|x|^{-1})]^2} 
\Ee 
and 
\Be
\label{eq:lambdatozero=infty-limit}  
 \lim_{\la\searrow 0} \la \nu_\gamma(E_{\la,\gamma}[u]) =\infty. 
\Ee 
If in addition $N \ge 2$ or $-1<\ga<0$ there exists  $u$ with the above properties and 
\Be
\label{eq:all-lambdainfty-limit}  
 \nu_\gamma(E_{\la,\gamma}[u]) =\infty  \text{ for all } \la > 0.
 \Ee
\end{proposition}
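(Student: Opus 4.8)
The plan is to adapt the superposition of rescaled building blocks used for \cref{prop:wknolimits}, but with scales that shrink to $0$, so that the blocks accumulate at the origin and $u$ becomes compactly supported instead of escaping to infinity. Let $b_m\defeq f_m$ (the translate of $u_m$ from \eqref{eq:umdef}, \eqref{eq:fmdef}) when $-1<\ga<0$, and $b_m(x)\defeq v_m(x-3e_1)$ with $v_m$ as in \eqref{eq:vmdef} when $\ga=-1$. In both cases $b_m$ is supported in a fixed compact set $B_\ast\subset\bbR^N\setminus\{0\}$, one has $\norm{\nabla b_m}_{L^1(\bbR^N)}\le C(N)$ uniformly in $m$ (the $p=1$ endpoints of the bounds in \cref{lem:wk} and \cref{sec:proof-ga=1}), and there are $\mu_0>0$ and $c(N,\ga)>0$ such that $\nu_\ga\bigl(E_{\mu,\ga}[b_m]\cap(B_\ast\times B_\ast)\bigr)\ge c(N,\ga)\,m$ for all $m$ above some threshold $m(N,\ga)$ and all $0<\mu\le\mu_0$; for $b_m=f_m$ this is \eqref{eq:lowbd[0,1]} up to a translation, and for $b_m=v_m(\cdot-3e_1)$ it follows from the chain of estimates in \cref{sec:proof-ga=1} together with translation invariance of $\nu_\ga$. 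I put $R_n\defeq 8^{-n}$, pick a coefficient sequence $c_n>0$ and indices $m(n)\to\infty$ to be fixed below, and set $u(x)\defeq\sum_{n\ge n_0}c_n\,b_{m(n)}(x/R_n)$; the rescaled supports $R_nB_\ast$ are then pairwise disjoint (the ratio $8$ suffices for every $N$) and shrink to $\{0\}$.

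As long as $c_nR_n^{N-1}\lesssim(\log(2+R_n^{-1}))^{-2}$, the ``soft'' properties hold automatically. Disjointness of the $R_nB_\ast$ makes $u$ compactly supported and $C^\infty$ on $\bbR^N\setminus\{0\}$, with $u(0)=0$ (since $0\notin B_\ast$). On the band where the $n$-th block is active, $\abs{u(x)}\le\norm{b}_{L^\infty}c_n\lesssim R_n^{-(N-1)}(\log(2+R_n^{-1}))^{-2}\asymp\abs{x}^{-(N-1)}(\log(2+\abs{x}^{-1}))^{-2}$, which is \eqref{eq:log-zero}; since this bound is $\lesssim\abs{x}^{-(N-1)}\in L^1_\loc$ and $u$ is compactly supported, $u\in L^1$. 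Finally $\sum_n\norm{\nabla(c_nb_{m(n)}(\cdot/R_n))}_{L^1}=\sum_nc_nR_n^{N-1}\norm{\nabla b_{m(n)}}_{L^1}\lesssim\sum_n(\log(2+R_n^{-1}))^{-2}\lesssim\sum_nn^{-2}<\infty$, so $u\in\dot W^{1,1}(\bbR^N)$, hence $u\in W^{1,1}(\bbR^N)$. The crucial point is that $m(n)$ is entirely free here, because $\norm{\nabla b_m}_{L^1}$ does not grow with $m$.

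The only real content is the lower bound for $\nu_\ga(E_{\la,\ga}[u])$. On the disjoint block $R_nB_\ast$ one has $u=c_nb_{m(n)}(\cdot/R_n)$, so the $N$-dimensional scaling relation $\nu_\ga(tE)=t^{N+\ga}\nu_\ga(E)$ (cf.\ \eqref{eq:mubeta-dil}) yields
\[
\nu_\ga\bigl(E_{\la,\ga}[u]\cap(R_nB_\ast)^2\bigr)=R_n^{N+\ga}\,\nu_\ga\bigl(E_{\la R_n^{1+\ga}/c_n,\,\ga}[b_{m(n)}]\cap(B_\ast\times B_\ast)\bigr)\ge c(N,\ga)\,m(n)\,R_n^{N+\ga}
\]
whenever $\la\le\mu_0\la_n^\ast$, where $\la_n^\ast\defeq c_nR_n^{-(1+\ga)}$. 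For the strong conclusion \eqref{eq:all-lambdainfty-limit}, which is asserted precisely when $N+\ga>0$ (i.e.\ $N\ge2$, or $N=1$ and $\ga>-1$), I take $c_n\defeq R_n^{-(N-1)}n^{-2}$ (the largest allowed by the decay), so that $\la_n^\ast=R_n^{-(N+\ga)}n^{-2}=8^{n(N+\ga)}n^{-2}\to\infty$, together with $m(n)\defeq\max\{m(N,\ga)+1,\lceil R_n^{-(N+\ga)}\rceil\}$, so that $m(n)R_n^{N+\ga}\ge1$. For any fixed $\la_0>0$ all $n\ge n_1(\la_0)$ satisfy $\la_0\le\mu_0\la_n^\ast$, and since the sets $(R_nB_\ast)^2$ are pairwise disjoint in $\bbR^{2N}$ we may sum to obtain $\nu_\ga(E_{\la_0,\ga}[u])\ge c(N,\ga)\sum_{n\ge\max\{n_0,n_1(\la_0)\}}m(n)R_n^{N+\ga}=+\infty$; this proves \eqref{eq:all-lambdainfty-limit} and, trivially, \eqref{eq:lambdatozero=infty-limit}. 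For the one remaining case $N=1$, $\ga=-1$ of the first assertion — where $N+\ga=0$ and, as recalled after \cref{thm:counterexamples}, $\nu_{-1}(E_{\la,-1}[u])$ is necessarily finite — I instead take $c_n\defeq n^{-2}$ (so $\la_n^\ast=n^{-2}\searrow0$) and $m(n)\defeq\max\{m(1,-1)+1,n^3\}$; then $\nu_{-1}(E_{\mu_0n^{-2},-1}[u])\ge c(1,-1)\,n^3$, and for $\la\in(\mu_0(n+1)^{-2},\mu_0n^{-2}]$ the monotonicity of $\la\mapsto\nu_{-1}(E_{\la,-1}[u])$ gives $\la\,\nu_{-1}(E_{\la,-1}[u])\ge\mu_0(n+1)^{-2}c(1,-1)n^3\to\infty$ as $\la\searrow0$, i.e.\ \eqref{eq:lambdatozero=infty-limit}.

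The main obstacle is reconciling the prescribed decay \eqref{eq:log-zero} with the desired size of $\nu_\ga$: the decay caps $c_nR_n^{N-1}$ at $\asymp(\log(2+R_n^{-1}))^{-2}$, so the per-block contribution $c(N,\ga)m(n)R_n^{N+\ga}$ can only be prevented from vanishing by letting the index $m(n)$ grow very fast, and one must check that inflating $m(n)$ does not cost $\dot W^{1,1}$-membership — which is exactly why the $p=1$ endpoint of \cref{lem:wk} (where $\norm{g_m'}_{L^1}$ stays bounded in $m$) is essential. A secondary, more bookkeeping-level, point is that the blocks must be supported away from the origin so that at every point near $0$ only one term of the series is active; this is automatic for $f_m$ but forces the translated block $v_m(\cdot-3e_1)$, rather than the centred $v_m$ of \cref{prop:wknolimits}, when $\ga=-1$, and fixes the choice $R_n=8^{-n}$ so that $R_nB_\ast\cap R_{n+1}B_\ast=\emptyset$ for all $N$.
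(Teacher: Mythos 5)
Your construction is correct and is essentially the paper's own proof: rescaled copies of the blocks $f_{m(n)}$ (resp.\ $v_{m(n)}$ for $\gamma=-1$) accumulating at the origin, with $m(n)$ chosen large enough that $m(n)R_n^{N+\gamma}$ does not vanish, which costs nothing in $\dot W^{1,1}$ because $\|\nabla b_m\|_{L^1}$ is bounded in $m$. The only differences are bookkeeping-level (you sum infinitely many disjoint block contributions each bounded below by a constant, where the paper lets a single block's contribution $m(n)R_n^{N+\gamma}\to\infty$ via $m(n)\ge 2^{2^n}$; and you translate $v_m$ off the origin where the paper uses nested centered bumps), and both routes are valid.
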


  \begin{proof}
Consider first the case $-1 < \ga < 0$.   We choose for $n\in \bbN$   
   \Be \label{eq:choice-of-local-parameters}   
   \text{ $R_n=2^{-2n}$,  $m(n)\ge 2^{2^n}$.}
   \Ee
 and with these choices of $R_n$ and $m(n)$ and $f_m$ as in \eqref{eq:fmdef} and \eqref{eq:umdef} we  define again  
 \[
 u(x) =\sum_{n=2}^\infty \frac{1}{n^2 R_n^{N-1}} f_{m(n)}(\frac{x}{R_n}).
 \]
  The sum converges in $W^{1,1}$ to a function supported in $[-4,4]^N$.
  We have $|u(x)|\le C 2^{2n(N-1)}n^{-2}$ for $0<x_1\le 2^{-2n}$;   moreover $|x'|\lc |x_1|$ on the support of $u$. This implies   $|u(x)|\le C'[|x|^{1-N}\log(1/|x|)]^{-2}$ for small $x$.
  Also, because of the choices of $R_n$ we see that $u$ is smooth away from $0$.
  
 Fix $\la > 0$. Since $\lim_{n\to\infty} R_n^{N+\ga} n^2=0$ we may choose $n_0$ such that 
  \Be \label{eq:large-n-fixed-la}
  \la R_n^{N+\gamma}n^2 \le 1, \quad \forall n\ge n_0.\Ee 
  Now $\nu_\ga(E_{\la,\ga}[u] ) \geq \nu_\ga(E_{\la,\ga}[u] \cap ([2R_n,3R_n] \times \R^{N-1})^2)$, and again $f_{m(n)}(R_n^{-1}\cdot)$ is supported in $\cR(n)=[R_n, 4R_n]\times [-4R_n, 4R_n]^{N-1}$. Hence by the same rescaling argument as in \eqref{eq:rescale}, we obtain
   \[
   \nu_\ga(E_{\la,\ga}[u] )
   \geq R_n^{N+\ga} \nu_\ga(\{(x,y) \colon x_1,y_1 \in [2,3], |Q_{\ga}f_{m(n)}(x,y)| > \la R_n^{N+\ga} n^2\}).
   \]
   If $n \ge n_0$ then this gives 
   \[
   \nu_\ga(E_{\la,\ga}[u] ) \geq R_n^{N+\ga} \nu_\ga(\{(x,y) \colon x_1,y_1 \in [2,3], |Q_{\ga}f_{m(n)}(x,y)| > 1\}) \geq c(N,\ga) m(n) R_n^{N+\gamma}
   \]
   by \eqref{Qbeta-fm-bd}.  Since $\lim_{n\to\infty} m(n) R_n^{N+\gamma}=\infty $ by  \eqref{eq:choice-of-local-parameters} we conclude $\nu_\gamma(E_{\la,\gamma} [u])=\infty$.

 For the case $\gamma=-1$ and $N \ge 2$, define $u$ as in  \eqref{eq:def-of-u-ga=1} but with the choice of the parameters $R_n$, $m(n)$ as in \eqref{eq:choice-of-local-parameters} to obtain a compactly supported  $u\in W^{1,1}$  satisfying \eqref{eq:log-zero}.   
 We now fix $\la>0$ and  note that when $N\ge 2$ we have $\la R_n^{N-1} n^2\to 0$ as $n \to \infty$. The  above calculation gives    $\nu_{-1}(E_{\la,-1}[u] ) \ge c(N) m(n) R_n^{N-1}$ provided that     $\la R_n^{N-1} n^2\le 1$ and thus the conclusion $\nu_{-1}(E_{\la, -1}[u]) =\infty$. 
    
Finally, clearly \eqref{eq:lambdatozero=infty-limit}  follows from 
\eqref{eq:all-lambdainfty-limit}, and the latter was proved if $-1<\ga<0$ or $N\ge 2$.  It remains to consider the case $N=1$, $\ga=-1$. We define $u$ as in the previous paragraph. The above calculation shows that $\nu_{-1} (E_{\la, -1}[u])\ge c m(n)$ provided that $\la < 1/n^2$ which establishes \eqref{eq:lambdatozero=infty-limit} in this last case.
 \end{proof} 
 
 The case $N=1$, $\gamma=-1$ plays a special role. The following lemma shows that the  conclusion \eqref{eq:all-lambdainfty-limit} in \cref{prop:comp-supp-example} fails in this case. 
\begin{lemma}\label{lem:uniform-cont}  
Let $u\in \dot W^{1,1}(\bbR) $ be compactly supported. Then $\nu_{-1} (E_{\la, -1}[u]) <\infty $ for all $\la>0$.
\end{lemma}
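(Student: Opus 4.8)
The plan is to exploit the degeneracy of the difference quotient in the present regime $N=1$, $\ga=-1$: since $1+\ga=0$ we simply have $Q_{-1}u(x,y)=u(x)-u(y)$, while $\dif\nu_{-1}(x,y)=|x-y|^{-2}\dif x\dif y$, so that
\[
\nu_{-1}(E_{\la,-1}[u])=\iint\limits_{|u(x)-u(y)|>\la}\frac{\dif x\dif y}{|x-y|^{2}}.
\]
Throughout I would work with the absolutely continuous representative of $u$ (which exists since $u'\in L^{1}(\bbR)$, and satisfies $u(x)-u(y)=\int_{[y,x]}u'$); this is harmless because modifying $u$ on a Lebesgue-null set $Z\subset\bbR$ alters the integration domain only within $(Z\times\bbR)\cup(\bbR\times Z)$, a set of $\cL^{2}$-measure zero and hence of $\nu_{-1}$-measure zero. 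Also note that compact support passes to this representative: if $\supp u\subset[-M,M]$ then $u\equiv 0$ off $[-M,M]$.

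Two elementary observations drive the argument. First, a quantitative modulus of continuity: from $|u(x)-u(y)|\le\int_{[y,x]}|u'|$ and the absolute continuity of the Lebesgue integral of $|u'|\in L^{1}(\bbR)$, for each $\la>0$ there is $\delta=\delta(\la,u)>0$ with $|u(x)-u(y)|\le\la$ whenever $|x-y|\le\delta$; equivalently, $E_{\la,-1}[u]\subseteq\{(x,y):|x-y|>\delta\}$. Second, localization by the support: if $(x,y)\in E_{\la,-1}[u]$ then $u(x)\neq u(y)$, so at least one of $x,y$ lies in $[-M,M]$; since both $E_{\la,-1}[u]$ and $\nu_{-1}$ are invariant under the interchange $(x,y)\mapsto(y,x)$, subadditivity gives
\[
\nu_{-1}(E_{\la,-1}[u])\le 2\,\nu_{-1}\bigl(E_{\la,-1}[u]\cap([-M,M]\times\bbR)\bigr).
\]

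Combining these two facts, I would estimate
\[
\nu_{-1}(E_{\la,-1}[u])\le 2\int_{-M}^{M}\int_{|x-y|>\delta}\frac{\dif y}{|x-y|^{2}}\,\dif x
=2\int_{-M}^{M}\frac{2}{\delta}\,\dif x=\frac{8M}{\delta}<\infty,
\]
which is the claim. There is no serious obstacle here; the only points that need a little care are the choice of representative (so that the pointwise set $E_{\la,-1}[u]$ is meaningful and its $\nu_{-1}$-measure is unambiguous) and the extraction of $\delta(\la,u)>0$ from the absolute continuity of $\int|u'|$. This argument also makes transparent why the case $N=1$, $\ga=-1$ is genuinely exceptional in \cref{prop:comp-supp-example}: the identity $1+\ga=0$ removes the diagonal singularity of $Q_{\ga}u$ entirely, so the sole input needed for convergence of the $\nu_{-1}$-integral is that $u$ be uniformly continuous with compact support — a property automatic for every compactly supported $u\in\dot W^{1,1}(\bbR)$.
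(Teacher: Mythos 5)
Your proposal is correct and follows essentially the same route as the paper: absolute continuity of $\int|u'|$ yields a $\delta(\la)>0$ with $E_{\la,-1}[u]\subseteq\{|x-y|>\delta\}$, and compact support confines the integration, giving a bound of order $M/\delta$. The only cosmetic difference is that you use the symmetry of $E_{\la,-1}[u]$ and $\nu_{-1}$ to restrict to $x\in[-M,M]$, whereas the paper splits off the region $|x|>2R$ and estimates it directly; both yield the same conclusion.
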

\begin{proof}
Let $u\in \dot W^{1,1}(\bbR)$ be compactly supported in $[-R,R]$. Then given any $\la \in (0,1)$, there exists  $\delta(\la)>0$ such that $\int_{I} |u'| \leq \lambda/2$ for every interval $I \subset \R$ with length $\leq \delta(\lambda)$. As a result, $u$ is uniformly continuous on $\R$, with $\sup_{x\in \R}|u(x+h)-u(x)|\le \la/2$  for $|h|\le \delta(\la)$. Thus 
\begin{align*}
\nu_{-1}(E_{\la,-1} [u])&= 2 \int_{-\infty}^\infty \int\limits_{\substack{h>0\\ |u(x+h)-u(x) |> \la } }\frac{\dif h}{h^2} \dif x
\\&
\leq \int_{-2R}^{2R} \int_{\delta(\la) }^\infty \frac{\dif h}{h^2} \dif x
+\int_{\bbR\setminus[-2R,2R]} \int_{|x|-R}^{|x|+R} \frac{\dif h}{h^2} \dif x 
\le  4R(\delta(\la))^{-1} +4. \qedhere
\end{align*}
\end{proof}

\subsection{Generic failure in $W^{1,1}$,  for the case $-1\le \ga<0$.}

\begin{proposition} \label{proposition:generic} 
Let $-1\le \gamma<0$, $N\ge 2$ or $-1<\ga<0$, $N\ge 1$. Let 
\Be\label{eq:cVdef} \cV= \big\{ f\in W^{1,1}(\Rset^N): \text{ $ \nu_\ga(E_{\la,\ga} [f])<\infty$ for some $\la>0$.}\}\Ee  Then $\cV$ is 
 of first category in $W^{1,1}(\Rset^N)$, 
in the sense of Baire.
\end{proposition}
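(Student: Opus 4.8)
The plan is to exhibit $\cV$ as a countable union of closed sets with empty interior in $W^{1,1}(\Rset^N)$. For $\la>0$ and $A>0$ set
\[
\cV_{\la,A}\defeq\big\{f\in W^{1,1}(\Rset^N):\nu_\ga(E_{\mu,\ga}[f])\le A\text{ for all }\mu\ge\la\big\}.
\]
Since $\nu_\ga(E_{\mu,\ga}[f])$ is non-increasing in $\mu$, the condition $\nu_\ga(E_{\la,\ga}[f])<\infty$ for some $\la>0$ is equivalent to membership in $\bigcup_{\la,A}\cV_{\la,A}$ where $\la,A$ range over, say, positive rationals; thus $\cV=\bigcup_{\la,A\in\Qset_{>0}}\cV_{\la,A}$, a countable union. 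By the Baire category theorem it suffices to show each $\cV_{\la,A}$ is closed in $W^{1,1}$ and has empty interior.

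First I would prove closedness. If $f_j\to f$ in $W^{1,1}(\Rset^N)$ with $f_j\in\cV_{\la,A}$, then (passing to a subsequence) we may assume $f_j\to f$ a.e., hence the difference quotients $Q_\ga f_j(x,y)$ converge a.e.\ on $\Rset^{2N}$ to $Q_\ga f(x,y)$. For fixed $\mu\ge\la$ and any compact $K\subset\Rset^{2N}$ avoiding the diagonal, a routine Fatou-type argument exactly as in \cref{sec:extending-to-Sobolev} (using $E_{\mu,\ga}[f]\cap K\subseteq K\cap\bigcup_n\bigcap_{\ell\ge n}E_{\mu',\ga}[f_\ell]$ for any $\mu'<\mu$, then monotone convergence in $\mu'\nearrow\mu$ and in $K\uparrow\Rset^{2N}$) yields $\nu_\ga(E_{\mu,\ga}[f])\le\liminf_j\nu_\ga(E_{\mu,\ga}[f_j])\le A$. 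Hence $f\in\cV_{\la,A}$.

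The main step is to show $\cV_{\la,A}$ has empty interior; this is where the explicit counterexamples do the work. Given any $f_0\in\cV_{\la,A}$ and any $\eps>0$, I need $f\in W^{1,1}$ with $\|f-f_0\|_{W^{1,1}}<\eps$ and $f\notin\cV_{\la,A}$, i.e.\ $\nu_\ga(E_{\mu,\ga}[f])>A$ for some $\mu\ge\la$. Take $f=f_0+\eta\, u_m$ where $u_m\in C^\infty_c(\Rset^N)$ is the function from \cref{proposition_Mp_gamma_-1_0_1}(i) (equivalently from \eqref{eq:umdef}, with \eqref{eq:lowbd[0,1]} available), scaled so that $\|u_m\|_{W^{1,1}}<\eps$ after multiplying by a small constant, while still $\nu_\ga(E_{1,\ga}[\text{const}\cdot u_m])$ can be made $>A$; here one uses the scaling \eqref{eq:mubeta-dil} together with the homogeneity $E_{c\la,\ga}[cv]=E_{\la,\ga}[v]$, so that replacing $u_m$ by $t\,u_m$ rescales heights but one can rescale the spatial variable via \eqref{eq:mubeta-dil} to compensate, arranging simultaneously small $W^{1,1}$ norm and large $\nu_\ga(E_{\la,\ga}[\cdot])$ — this is precisely the mechanism already used in \cref{prop:comp-supp-example}. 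A small amount of care is needed because of the additive perturbation $f_0$: one places the bump $u_m$ (or rather its spatial rescaling) on a set where $f_0$ contributes negligibly, using that $\nu_\ga(E_{\mu,\ga}[f_0+v])\ge\nu_\ga(E_{2\mu,\ga}[v])-\nu_\ga(E_{\mu,\ga}[f_0]\cap(\text{support region}))$ and that, by inner regularity together with $f_0\in\cV_{\la,A}$, the subtracted term is finite and can be localized to be small or irrelevant after a suitable spatial translation/dilation of the bump into a region far out where the relevant contribution of $f_0$ to $\nu_\ga(E_{\mu,\ga}[f_0])$ over that region is $<A/2$. I expect this last localization-and-scaling bookkeeping — making the perturbation small in $W^{1,1}$ while keeping $\nu_\ga(E_{\mu,\ga}[\cdot])$ large and disentangled from $f_0$ — to be the only real obstacle; the Baire category wrapper is then immediate.
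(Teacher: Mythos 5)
Your proposal is correct and follows the same Baire-category skeleton as the paper (write $\cV$ as a countable union of sets defined by a uniform bound on $\nu_\ga(E_{\mu,\ga}[\cdot])$ above a threshold, then show each piece is closed and nowhere dense), but both substeps are carried out differently. For closedness the paper does not pass to a.e.\ convergence of $Q_\ga f_j$; it uses the level-set triangle inequality $\nu_\ga(E_{\la,\ga}[f]\cap\Omega_\ell)\le\nu_\ga(E_{(1-\delta)\la,\ga}[f_j]\cap\Omega_\ell)+\nu_\ga(E_{\delta\la,\ga}[f-f_j]\cap\Omega_\ell)$ together with the quantitative bound of \cref{lem:elementary-LL}; your Fatou route is equally valid (it is essentially the argument of \cref{sec:extending-to-Sobolev}), provided you treat the endpoint $\mu=\la$ via $E_{\la,\ga}[f]=\bigcup_{\mu>\la}E_{\mu,\ga}[f]$ and monotone convergence. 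For nowhere-density the paper takes a shortcut you did not use: it perturbs by a small multiple of the \emph{single} function $f_0$ of \cref{prop:comp-supp-example}, for which $\nu_\ga(E_{\mu,\ga}[f_0])=\infty$ for \emph{every} $\mu>0$ (this is exactly where the exclusion of $N=1$, $\ga=-1$ enters, by \cref{lem:uniform-cont}); then $\nu_\ga(E_{\la,\ga}[u+cf_0])\ge\nu_\ga(E_{2\la,\ga}[cf_0])-\nu_\ga(E_{\la,\ga}[u])=\infty$, with no scaling or localization whatsoever. Your alternative, using scaled copies $v=t\,u_m(\cdot/R)$ of the quantitative examples of \cref{proposition_Mp_gamma_-1_0_1}, also works and is more self-contained, but two remarks are in order. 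First, the localization/translation ``far out'' that you flag as the main obstacle is unnecessary: since $f_0\in\cV_{\la,A}$ you already have $\nu_\ga(E_{\la,\ga}[f_0])\le A$, so the crude inclusion $E_{2\la,\ga}[v]\subseteq E_{\la,\ga}[f_0+v]\cup E_{\la,\ga}[f_0]$ gives $\nu_\ga(E_{\la,\ga}[f_0+v])\ge\nu_\ga(E_{2\la,\ga}[v])-A$, and you only need $\nu_\ga(E_{2\la,\ga}[v])>2A$ with $\|v\|_{W^{1,1}}<\eps$. Second, the scaling bookkeeping does close up: with $t=2\la R^{1+\ga}$ one gets $\nu_\ga(E_{2\la,\ga}[v])=R^{N+\ga}\,\nu_\ga(E_{1,\ga}[u_m])\ge c\,m\,R^{N+\ga}$ while $\|v\|_{W^{1,1}}\le C\la R^{N+\ga}(1+R)$, so one first takes $R$ small and then $m$ large. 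Note this hinges on $N+\ga>0$, which holds in every case covered by the proposition and fails exactly in the excluded case $N=1$, $\ga=-1$ (where $R^{N+\ga}=1$ and $\|v\|_{W^{1,1}}$ cannot be made small), so your construction respects the hypotheses for the right reason.
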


Let 
\Be\label{eq:Omell} 
\begin{aligned}
 U_k&=\{(x,y)\in \bbR^{2N}: 2^{k-1}\le |x-y|\le 2^k\},
 \\ \Omega_\ell&={\textstyle \bigcup_{k=1-\ell}^\ell} U_k.
\end{aligned}
\Ee
For the proof of \cref{proposition:generic} we use  an elementary estimate for  the intersections $E_{\la,\ga}[u]\cap \Omega_\ell$.

\begin{lemma}\label{lem:elementary-LL} 
For all $\gamma\in \bbR$, $u\in W^{1,1}(\Rset^N)$, $\ell>0$ and $\Omega_\ell$ as in \eqref{eq:Omell},
\[ 
\sup_{\la>0} \la \nu_\ga(E_{\la,\ga}[u] \cap \Omega_\ell) \le C(N,\ga) \ell \|\nabla u\|_1
\]
\end{lemma}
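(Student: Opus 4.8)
The plan is to reduce to a single dyadic annulus $U_k$ and then exploit the scaling $\nu_\ga(tE)=t^{1+\ga}\nu_\ga(E)$ together with a crude $L^{1,\infty}$-type bound on the difference quotient restricted to increments of comparable size. Since $\Omega_\ell=\bigcup_{k=1-\ell}^{\ell}U_k$ is a union of $2\ell$ annuli, by subadditivity it suffices to show
\[
\sup_{\la>0}\la\,\nu_\ga\big(E_{\la,\ga}[u]\cap U_k\big)\le C(N,\ga)\,\|\nabla u\|_1
\]
with a constant uniform in $k\in\Zset$. First I would use the dilation invariance: writing $u_k(x)=2^{-k}u(2^kx)$ (so that $\|\nabla u_k\|_1=2^{(N-1)k}\|\nabla u\|_1\cdot 2^{-k\cdot(\text{something})}$; in fact $\|\nabla u_k\|_{L^1}=2^{-k}2^{k}2^{kN}\cdots$) — more cleanly, substitute $x=2^kx'$, $y=2^ky'$ in the integral defining $\nu_\ga(E_{\la,\ga}[u]\cap U_k)$, which turns $|x-y|$ into $2^k|x'-y'|$ with $|x'-y'|\in[1/2,1]$, and reduces everything to the model annulus $U_0=\{1/2\le|x-y|\le1\}$ at the cost of absorbing powers of $2^k$ into the height $\la$ and into $\|\nabla u\|_1$; the homogeneity is arranged precisely so that these powers cancel, because $Q_\ga$ scales the same way on both sides when one also rescales $u$.

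On the model annulus $U_0$, the claim becomes $\sup_{\la>0}\la\,\nu_\ga(E_{\la,\ga}[u]\cap U_0)\le C(N,\ga)\|\nabla u\|_1$, which is now a genuinely local, scale-free statement. Here I would use the pointwise bound from \cref{proposition_weighted_upper_L1}: on $U_0$ we have $|x-y|^{\ga-N}\le C(N,\ga)|x-y|^{-N}$ since $|x-y|$ is bounded above and below, so $\nu_\ga(E_{\la,\ga}[u]\cap U_0)\le C(N,\ga)\,\cL^{2N}(E_{\la,\ga}[u]\cap U_0)$. On $U_0$, $E_{\la,\ga}[u]\cap U_0\subseteq\{(x,y):|u(x)-u(y)|>c\la\}$ for a fixed constant $c$ (again because $|x-y|^{1+\ga}$ is bounded), and by the fundamental theorem of calculus along the segment $[x,y]$ one has $|u(x)-u(y)|\le\int_{[x,y]}|\nabla u|\,ds$. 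Integrating the weak-type estimate: $\cL^{2N}(\{(x,y)\in U_0:\int_{[x,y]}|\nabla u|>c\la\})\le (c\la)^{-1}\iint_{U_0}\int_{[x,y]}|\nabla u(s)|\,ds\,dx\,dy$, and by the method of rotations (or a direct Fubini computation, integrating first in the variable along the segment) the triple integral is bounded by $C(N)\|\nabla u\|_{L^1(\R^N)}$ since the domain of integration in $(x,y)$ has $|x-y|$ bounded above. This yields $\nu_\ga(E_{\la,\ga}[u]\cap U_0)\le C(N,\ga)\la^{-1}\|\nabla u\|_1$, i.e. the desired scale-free bound, and multiplying by $\la$ and summing over the $2\ell$ annuli gives the $\ell$-linear bound.

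The main obstacle I anticipate is bookkeeping the exponents of $2^k$ in the rescaling step and making sure the constant in the model-annulus estimate is genuinely independent of $k$ after the reduction — one must check that the rescaled function $u_k$ has $\|\nabla u_k\|_{L^1}$ related to $\|\nabla u\|_{L^1}$ by exactly the power of $2^k$ needed to compensate the Jacobian factors and the rescaling of $\la$, which is automatic from the homogeneity degree $1+\ga$ of $\nu_\ga$ built into the exponent $\ga-N$, but deserves care. A second, minor point is justifying the Fubini/method-of-rotations step for $\dot W^{1,1}$ (as opposed to $C^1_c$) functions; this is handled by the same density and approximation argument as in \cref{sec:extending-to-Sobolev}, or one may simply first prove the inequality for $u\in C^1_c$ and then pass to the limit using \eqref{eq:Sob_FTC} and Fatou, exactly as done there. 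Apart from these, the argument is routine: the estimate is deliberately crude (it loses a factor $\ell$, reflecting the logarithmic divergence in \cref{lem:domainsprecise}), so no sharp tools are needed.
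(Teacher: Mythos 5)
Your argument is correct, and the per-annulus estimate is obtained by a genuinely different route from the paper's. The paper bounds $\la\,\nu_\ga(E_{\la,\ga}[u]\cap U_k)$ using the Lusin--Lipschitz inequality \eqref{eq:LL} together with the weak-$(1,1)$ Hardy--Littlewood maximal inequality: on $U_k$ one has $|x-y|^{\ga-N}\lesssim 2^{k(\ga-N)}$ and $E_{\la,\ga}[u]\cap U_k$ is contained in a superlevel set of $M(|\nabla u|)$ at height $c2^{k\ga}\la$, which immediately gives the $k$-uniform bound $C(N,\ga)\|\nabla u\|_1$. You instead rescale each $U_k$ to the model annulus $U_0$ and there use only the fundamental theorem of calculus, Chebyshev and Fubini; your exponent bookkeeping does check out ($2^{k(N+\ga)}$ from the Jacobian and weight, $2^{-k(1+\ga)}$ from the rescaled height, $2^{k(1-N)}$ from $\|\nabla v\|_1$, total exponent zero), and in fact the same computation can be run directly on $U_k$ without rescaling if you prefer to avoid it. Your route is more elementary --- it needs no maximal function, only the estimate $\iint_{U_0}\int_{[x,y]}|\nabla u|\,ds\,dx\,dy\le C(N)\|\nabla u\|_1$ --- at the cost of slightly more bookkeeping; the paper's route is shorter given that \eqref{eq:LL} is already in play from \cref{proposition_Mp_gamma_-1_0_p}. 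Two small points: the comparison $|x-y|^{\ga-N}\le C(N,\ga)|x-y|^{-N}$ on $U_0$ is a triviality and has nothing to do with \cref{proposition_weighted_upper_L1}, so that citation should be dropped; and the extension from $C^1_c$ to $W^{1,1}$ via the limiting argument of \cref{sec:extending-to-Sobolev} is exactly what the paper does as well.
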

\begin{proof} For $u\in C^1$ we use  the Lusin-Lipschitz inequality \eqref{eq:LL} 
to see that
\begin{multline*} 
\la \iint_{E_{\la,\ga} [u]\cap U_k} |x-y|^{\ga-N} \dif x\dif y \\ \le C(\ga) \la 2^{k \ga} \cL^N \{ x\in \bbR^N: M(|\nabla u|)(x) > c 2^{k \gamma}\la\}  \le C(N,\ga) \|\nabla u\|_1
\end{multline*}
by the Hardy-Littlewood maximal inequality.  Now sum in $1-\ell\le k\le \ell$. 
The extension to general $u\in W^{1,1}$ is obtained as in the limiting argument of \cref{sec:extending-to-Sobolev}.
\end{proof}

\begin{proof}[Proof of \cref{proposition:generic}]
Let, for $m\in \bbN$ and $j\in \bbZ$
\[
\cV(m,j)=\{u\in W^{1,1}(\Rset^N): \nu_{\gamma}(E_{\la,\ga}[u]) \le m \text{ for all $\la>2^j$}\}.
\]
Since $\la\mapsto \nu_\ga(E_{\la,\ga}[u])$ is decreasing we see that  $\cV$  is contained in $\bigcup_{m\ge 1}\bigcup_{j\in \bbZ} \cV(m,j)$. 
To show that $\cV$ is of first category in $W^{1,1} (\Rset^N)$, we need  to show that for every $m\in \bbN$, $j\in \bbZ$ the set $\cV(m,j)$ is nowhere dense.

We first show that $\cV(m,j)$ is  closed in $W^{1,1}(\Rset^N)$.
Let $u_n\in \cV(m,j)$  and $u\in  W^{1,1}(\Rset^N)$ such that $\lim_{n\to\infty}\|u-u_n\|_{W^{1,1}(\Rset^N)}= 0$. 
It suffices to show  that given $\eps>0$ we have $\nu_\ga(E_{\la,\ga} [u]) \le m+\eps$  for all $\la>2^j$.
  By the monotone convergence theorem, we have $\lim_{\ell\to \infty} \nu_\ga(E_{\la,\ga} [u]\cap \Omega_\ell) = \nu_\ga(E_{\la,\ga} [u])$, and it suffices to verify that 
 \Be \label{eq:closedness}
 \nu_\ga(E_{\la,\ga} [u] \cap \Omega_\ell) 
 \le m+\eps  \text{ for }\la>2^j, 
 \Ee
 for all $\ell \in \bbN$.
 Now let $\delta>0$ such that $(1-\delta)\la>2^j$. Then  
 \[
  \nu_\ga(E_{\la,\ga} [u] \cap \Omega_\ell) 
\le  \nu_{\gamma} (E_{ (1-\delta)\la,\ga}[u_n] \cap \Omega_\ell) + 
\nu_\gamma(E_{\delta \la ,\gamma}[u-u_n]\cap \Omega_\ell) 
\]
and using that $u_n\in \cV(m,j)$ together with $(1-\delta)\la>2^j$,  and \cref{lem:elementary-LL},  we see that for $\la>2^j$
\[
 \nu_\ga(E_{\la,\ga} [u] \cap \Omega_\ell) 
\le m + C(N,\ga)\ell \frac{1+\delta}{\delta 2^j}  \|\nabla(u_n-u)\|_1.
\]
Since $\delta>0$ was arbitrary and since 
$\|\nabla(u_n-u)\|_{L^1(\Rset^N)} \to 0$  by assumption we obtain
\eqref{eq:closedness}.

To show that  the closed set $\cV(m,j)$ is nowhere dense when $-1 \leq \ga < 0$ we need to verify  that for every $u\in\cV(m,j)$ and $\eps_1>0$ there exists $f\in W^{1,1}(\Rset^N)$ such that $\|f-u\|_{W^{1,1} (\Rset^N)}<\eps_1$ and $f\notin \cV(m,j)$. To see this we use \cref{prop:comp-supp-example} according to which there exists a compactly supported $W^{1,1}$ function $f_0$  for which $\nu_{\gamma}(E_{\la,\ga} [f_0])=\infty$ for all $\lambda >0$. It is then clear that  $f=u +  \frac {\eps_1} 2  \frac{f_0} {\|f_0\|_{W^{1,1}}}$ satisfies $\|f-u\|_{W^{1,1}}\le \eps_1/2$ and also, 
\[
\nu_{\ga}(E_{\la,\ga}[f]) \geq \nu_{\ga}(E_{2\la,\ga}[\tfrac{\varepsilon_1}{2} \tfrac{f_0}{\|f_0\|_{W^{1,1}}}]) - \nu_{\ga}(E_{\la,\ga}[u]) = \infty
\] 
for every $\la > 2^j$, for all $j\in \bbZ$. The  proposition is proved.
\end{proof} 

To include a result of generic failure of the limiting relation in the case $N=1$, $\gamma=-1$ we give
\begin{proposition} \label{proposition:generic-bis} 
Let $-1\le \gamma<0$. Let 
\[ 
\cW= \big\{ f\in W^{1,1} (\Rset): \text{ $ \limsup_{R\to 0} \sup_{\la>R} R \nu_\ga(E_{\la,\ga} [f])<\infty$}\}.
\]  
Then $\cW$ is of first category in $W^{1,1}$, in the sense of Baire.
\end{proposition}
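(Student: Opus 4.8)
The plan is to follow the scheme of the proof of \cref{proposition:generic}, the only change being that \cref{prop:comp-supp-example} does \emph{not} provide, in the case $N=1$, $\gamma=-1$, a compactly supported $W^{1,1}$ function $f_0$ with $\nu_{-1}(E_{\lambda,-1}[f_0])=\infty$ for all $\lambda>0$ (by \cref{lem:uniform-cont} no such function exists); it only provides one for which $\lim_{R\searrow 0}R\,\nu_\gamma(E_{R,\gamma}[f_0])=\infty$. I would first record that, since $\lambda\mapsto E_{\lambda,\gamma}[f]$ is monotone and $\bigcup_{\lambda>R}E_{\lambda,\gamma}[f]=E_{R,\gamma}[f]$, continuity of $\nu_\gamma$ from below gives $\sup_{\lambda>R}\nu_\gamma(E_{\lambda,\gamma}[f])=\nu_\gamma(E_{R,\gamma}[f])$, so $\cW=\{f\in W^{1,1}(\Rset):\limsup_{R\to 0}R\,\nu_\gamma(E_{R,\gamma}[f])<\infty\}$. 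Hence, setting for $m,k\in\bbN$
\[
\cW(m,k)\coloneqq\{f\in W^{1,1}(\Rset): R\,\nu_\gamma(E_{R,\gamma}[f])\le m\text{ for all }R\in(0,1/k)\},
\]
one has $\cW\subseteq\bigcup_{m,k\in\bbN}\cW(m,k)$, and by the Baire category theorem it suffices to prove that each $\cW(m,k)$ is nowhere dense.

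Next I would show $\cW(m,k)$ is closed in $W^{1,1}(\Rset)$, following verbatim the closedness argument in \cref{proposition:generic}. Given $f_n\in\cW(m,k)$ with $f_n\to f$ in $W^{1,1}(\Rset)$, I fix $\lambda\in(0,1/k)$, $\delta\in(0,1)$ and $\ell\in\bbN$; from the inclusion $E_{\lambda,\gamma}[f]\subseteq E_{(1-\delta)\lambda,\gamma}[f_n]\cup E_{\delta\lambda,\gamma}[f-f_n]$, the bound $\nu_\gamma(E_{(1-\delta)\lambda,\gamma}[f_n])\le m/((1-\delta)\lambda)$ (legitimate since $(1-\delta)\lambda<1/k$ and $f_n\in\cW(m,k)$), and \cref{lem:elementary-LL} applied to $f-f_n$ on $\Omega_\ell$, I obtain
\[
\nu_\gamma(E_{\lambda,\gamma}[f]\cap\Omega_\ell)\le\frac{m}{(1-\delta)\lambda}+\frac{C(1,\gamma)\,\ell}{\delta\lambda}\,\|\nabla(f-f_n)\|_{L^1(\Rset)}.
\]
Letting $n\to\infty$, then $\delta\to 0$, and finally $\ell\to\infty$ (monotone convergence, using $\bigcup_\ell\Omega_\ell=\{(x,y):x\neq y\}$) yields $\lambda\,\nu_\gamma(E_{\lambda,\gamma}[f])\le m$ for every $\lambda\in(0,1/k)$, i.e.\ $f\in\cW(m,k)$.

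To see that the closed set $\cW(m,k)$ has empty interior, I would fix $u\in\cW(m,k)$ and $\eps>0$, take the compactly supported $f_0\in W^{1,1}(\Rset)$ supplied by \cref{prop:comp-supp-example} with $\lim_{R\searrow 0}R\,\nu_\gamma(E_{R,\gamma}[f_0])=\infty$, and set $f\coloneqq u+c f_0$ with $c\coloneqq\tfrac{\eps}{2}\|f_0\|_{W^{1,1}}^{-1}$, so that $\|f-u\|_{W^{1,1}}<\eps$. Using $E_{2R,\gamma}[c f_0]\subseteq E_{R,\gamma}[f]\cup E_{R,\gamma}[u]$ together with $Q_\gamma(cf_0)=c\,Q_\gamma f_0$ (hence $E_{2R,\gamma}[cf_0]=E_{2R/c,\gamma}[f_0]$), I get
\[
R\,\nu_\gamma(E_{R,\gamma}[f])\ \ge\ \tfrac{c}{2}\,(2R/c)\,\nu_\gamma\!\big(E_{2R/c,\gamma}[f_0]\big)\ -\ m \qquad\text{for } R<1/k.
\]
As $R\to 0$ one has $2R/c\to 0$, so the first term tends to $+\infty$; hence $\limsup_{R\to 0}R\,\nu_\gamma(E_{R,\gamma}[f])=\infty$, so $f\notin\cW$ and a fortiori $f\notin\cW(m,k)$. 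Thus each $\cW(m,k)$ is nowhere dense and $\cW$ is of first category.

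The argument is essentially routine once \cref{lem:elementary-LL} and \cref{prop:comp-supp-example} are in hand; the one conceptual point — and the thing I would write out with care — is precisely why one must pass from finiteness of $\nu_\gamma(E_{\lambda,\gamma}[f])$, as used in \cref{proposition:generic}, to the scale-invariant rate $R\,\nu_\gamma(E_{R,\gamma}[f])$: in the case $N=1$, $\gamma=-1$ the former notion is vacuous by \cref{lem:uniform-cont}, and checking that the truncation-plus-\cref{lem:elementary-LL} mechanism still delivers closedness of the $\cW(m,k)$ in this weaker formulation is the only nontrivial verification.
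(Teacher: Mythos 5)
Your proof is correct and follows essentially the same strategy as the paper's. Your reformulation $\sup_{\la>R}\nu_\ga(E_{\la,\ga}[f])=\nu_\ga(E_{R,\ga}[f])$ (continuity from below of $\nu_\ga$ along the increasing family $\{E_{\la,\ga}[f]\}_{\la>R}$, whose union is $E_{R,\ga}[f]$) is a clean simplification that the paper does not make explicit — the paper works directly with the double supremum $\sup_{0<R\le 2^{-j}}\sup_{\la>R}R\nu_\ga(E_{\la,\ga}[u])$. The paper's definition of $\cW(m,j)$ uses a closed interval $R\in(0,2^{-j}]$ where you use an open one; either works since one only needs $\cW\subseteq\bigcup\cW(m,k)$ together with closedness and nowhere-density. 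For closedness the paper simply says the argument from the proof of \cref{proposition:generic} carries over; you spell it out, correctly, with the same $\delta$-splitting plus \cref{lem:elementary-LL}, and the order of limits $n\to\infty$, then $\delta\to0$, then $\ell\to\infty$ is sound. For nowhere-density the paper picks one explicit value $\la=2R$ at a carefully quantified scale $R$ and shows $R\nu_\ga(E_{\la,\ga}[f])>m$ directly, whereas you show the (formally stronger) statement $\limsup_{R\to0}R\nu_\ga(E_{R,\ga}[f])=\infty$ and then invoke $\cW(m,k)\subseteq\cW$ — both are valid and rely on the same input from \cref{prop:comp-supp-example}, namely the divergence of $\la\nu_\ga(E_{\la,\ga}[f_0])$ as $\la\searrow0$.
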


\begin{proof} Clearly $\cW\subset \cV$ where $\cV$ is defined in \eqref{eq:cVdef}. 
We define 
\[
\cW(m,j)=\big\{u\in W^{1,1}(\Rset): \sup_{0<R\le 2^{-j}} \sup_{\la>R} R\nu_\ga(E_{\la,\gamma}[u])\le m\big\}
\] 
and note that
\Be\label{eq:cWincl} 
\cW \subset \cup_{j\ge 1}\cup_{m\ge 1} \cW(m,j)
\Ee
The arguments in the proof of \cref{proposition:generic} that was used to show that  the sets $\cV(m,j)$ are closed in $W^{1,1}(\Rset^N)$ also show that the sets $\cW(m,j)$ are closed in $W^{1,1}(\Rset)$. 

Let $u\in \cW(m,j)$, and let $\eps_1>0$. By \cref{prop:comp-supp-example} there is $f_0\in W^{1,1}(\Rset)$ such that $\lim_{\la\searrow 0} \la\nu_\ga(E_{\la,\ga}[f_0])=\infty$. We may normalize so that $\|f_0\|_{W^{1,1}(\Rset)} = 1$. Pick $R \in (0, 2^{-j}]$ so that $\la \nu_\ga(E_{\la,\ga} [f_0]) > 16m/\varepsilon_1$ for $\la \le 8R/\varepsilon_1$. Let $f=u+ (\eps_1/2) f_0$ so that $\|f-u\|_{W^{1,1}(\Rset)}\le \eps_1/2$. Moreover if $\la = 2R$, then $\la > R$ and 
\begin{align*}
&R \nu_\ga(E_{\la,\ga}[f]) 
\ge R \nu_\ga(E_{2\la,\ga}[\tfrac{\eps_1}2 f_0]) -  R  \nu_{\ga} (E_{\la,\ga} [u]) \\
&= \tfrac{\varepsilon_1}{8} \tfrac{8R}{\varepsilon_1} \nu_\ga(E_{8R/\varepsilon_1,\ga}[f_0]) - R \nu_{\ga} (E_{\la,\ga} [u]) > \tfrac{\varepsilon_1}{8} \tfrac{16m}{\varepsilon_1}-m = m
\end{align*}
and we see that $f\notin \cW(m,j)$.
Thus we have shown that $\cW(m,j)$ is nowhere dense in $W^{1,1}(\Rset)$.
By \eqref{eq:cWincl} the proof is concluded. 
\end{proof} 

\section{Perspectives and open problems}\label{sec:perspectives}

\subsection{\it Subspaces of $\dot W^{1,1}$ and $\dot{BV}$ and related spaces}
The failure of the upper bounds for $[Q_\gamma u]_{L^{1,\infty}(\R^{2N},\nu_\ga)}$ for $\gamma\in [-1,0)$ raises a number of interesting questions.
Consider the space $\dot{BV}(\ga)$ consisting of all $\dot {BV}$ functions satisfying 
\Be\label{eq:W11ga-qnorm} 
\|u\|_{\dot{BV}(\ga)} \coloneqq \|\nabla u\|_{\cM} +\sup_{\la>0} \la\nu_\gamma(E_{\la,\ga}[u])<\infty
\Ee
and the corresponding subspace $\dot W^{1,1}(\ga)$ of $\dot W^{1,1}$.  

\subsubsection*{Embeddings} We proved in this paper that for $\ga\notin[-1,0]$ we have $\dot {BV}(\gamma)=\dot{BV}$ and $\dot W^{1,1}(\ga)=\dot W^{1,1}$.  It is natural to ask how in  the range $-1\le \ga<0$ the proper subspaces  $\dot{BV}(\gamma)$  and $\dot W^{1,1}(\gamma)$  relate  to other families of function spaces, in particular to the  Hardy-Sobolev space $\dot F^{1}_{1,2}$, another subspace of $\dot W^{1,1}$. 

\subsubsection*{Triangle inequalities} The spaces $\dot W^{1,1}(\gamma)$ and $\dot {BV}(\gamma)$ are defined via $ L^{1,\infty}$ -quasi-norms, and the space  $L^{1,\infty}$ is not  normable (unlike $L^{p,\infty}$ for $1<p<\infty$ which is normable \cite{Hunt_1966}).
However \cref{theorem_weighted_pequal1} tells us that $\dot W^{1,1}(\gamma)$ and $\dot{BV}(\gamma)$ are normable for $\gamma\notin [-1,0]$. Are these spaces normable in the range  $\gamma\in [-1,0)$? 

\subsubsection*{Related quasi-norms} Consider for $0<s\le 1$ 
\[ 
\|u\|_{(p,s,\gamma)} = \Big [ \frac{u(x)-u(y)}{|x-y|^{\frac{\gamma}{p}+s}} \Big]_{L^{p, \infty}(\R^{2N}, \nu_\ga)}.
\]
It is an obvious consequence of \cref{theorem_weighted_pgr1} that for $s=1$ and fixed $p>1$, these expressions define  equivalent (semi/quasi)-norms on $C^\infty_c$ as $\ga$ varies over $\R \setminus\{0\}$. It would be interesting to find a more direct proof of this observation which does not involve the relation with $\dot W^{1,p}$. We note that the  equivalence for varying $\gamma$  breaks down for $0<s<1$. This result, and more  about the  spaces for which $\|u\|_{(p,s,\gamma)}<\infty$ with $0<s<1$,  such as their connection to Besov spaces and interpolation, can be found in  \cite{ssvy}.

\subsection{\it Other limit functionals}\label{sec:Otherliminfs}
Our results,  combined with the various developments presented in \citelist{\cite{BN2018}\cite{BN2020}\cite{Nguyen07}\cite{Nguyen11}}, suggest several possible directions of research.

Can one prove a generalization of $\eqref{eq:converse-p} $, $\eqref{eq:converse-1} $ where the supremum is replaced by the $\liminf_{\la\to\infty}$ when $\gamma>0$ and by a $\liminf_{\la\to 0^+}$ when $\gamma<0$. 
More precisely, for $1< p<\infty$ is there a positive constant $C(N,\gamma,p)$ such that for all $u\in L^1_\loc(\R^N)$
\begin{subequations}
\begin{align} 
\label{eq:liminfLppos}
\|\nabla u\|_{L^p}^p\le C(N,\gamma,p) 
\liminf_{\la\to\infty} \la^p\nu_\ga(E_{\la,\ga/p}[u])\text{ if } \gamma>0,
\\
\label{eq:liminfLpneg}
\|\nabla u\|_{L^p}^p \le C(N,\gamma,p) 
\liminf_{\la\searrow  0} \la^p \nu_\ga(E_{\la,\ga.p}[u])  \text{ if } \gamma<0,
 \end{align}
 \end{subequations} 
in the sense that $\|\nabla u\|_p=\infty$ if $u\in L^1_\loc\setminus \dot W^{1,p}$? 

For $p=1$ we can also ask: Is there a positive constant $C(N,\gamma)$ such that for all $u\in L^1_\loc(\R^N)$,
\begin{subequations}
\begin{align} 
\label{eq:liminfBVpos}
\|\nabla u\|_\cM\le C(N,\gamma) 
\liminf_{\la\to\infty} \la \nu_\ga(E_{\la,\ga}[u]) \text{ if } \gamma>0,
\\
\label{eq:liminfBVneg}
\|\nabla u\|_\cM\le C(N,\gamma) 
\liminf_{\la\searrow  0} \la \nu_\ga(E_{\la,\ga}[u]) \text{ if } \gamma<0,
 \end{align}
 \end{subequations}
 in the sense that $\|\nabla u\|_{\cM}=\infty$ if $u\in L^1_\loc\setminus \dot {BV}$?
 
\cref{theorem_weighted_reverse}  gives \eqref{eq:liminfLppos} and \eqref{eq:liminfLpneg} if we additionally assume $u \in \dot{W}^{1,p}(\R^N)$. It also gives \eqref{eq:liminfBVpos} and \eqref{eq:liminfBVneg} if we additionally assume that $u \in \dot{W}^{1,1}(\R^N)$. It would already be interesting to establish  \eqref{eq:liminfBVpos}, \eqref{eq:liminfBVneg} for all $\dot {BV}$ functions.

When $\ga = -1$, $p = 1$, \eqref{eq:liminfBVneg} holds for all $u \in L^1_\loc(\R^N)$ as established in  Nguyen \cite[Theorem 2]{Nguyen08} and Brezis--Nguyen \cite[Section 3.4]{BN2018}. For  $\ga=-p$, $1<p<\infty$ inequality  \eqref{eq:liminfLpneg} was proved in Bourgain--Nguyen \cite{Bourgain_Nguyen}. 
For   $\gamma=N$, Poliakovsky \cite{poliakovsky}  proved weaker versions of \eqref{eq:liminfLppos} and \eqref{eq:liminfBVpos}
where  the $\liminf$ is replaced by a $\limsup$. 

\subsection{\(\Gamma\)--convergence}\label{sec:Gammaconv}

This is a far-reaching generalization of the questions raised in \cref{sec:Otherliminfs}. 
For fixed $p \ge 1$ and $\ga \in \R \setminus \{0\}$ consider the functionals  
\Be\notag
 \Phi_{\lambda} [u]
 \defeq 
 \lambda^p \nu_\ga( E_{\lambda, \gamma/p} [u]), \quad \la \in (0,\infty)
\Ee
 defined for all  \(u\in L^1_\loc(\bbR^N)\).
It would be very interesting to study the  $\Gamma$-limit of \(\Phi_{\lambda}\) in $L^1_{\loc}(\R^N)$, in the sense of De Giorgi, as \(\lambda \to \infty\) when \(\gamma > 0\), resp.\ as \(\lambda \searrow 0\) when \(\gamma < 0\). 
More specifically, if $p>1$ define on $L^1_{\loc}(\R^N)$,
\[
\Phi_{*,c}[u]=
\begin{cases}  
c\|\nabla u\|_p^p &\text{ if } u\in \dot W^{1,p}(\bbR^N)
\\ \infty &\text{ otherwise},
\end{cases} 
\]
and for $p=1$ define
\[
\Phi_{*,c}[u]=
\begin{cases}  
c\|\nabla u\|_{\cM}  &\text{ if } u\in \dot {BV}(\bbR^N)
\\ \infty &\text{ otherwise}.
\end{cases} 
\]
A challenging question is whether there exists a constant $c=c(p,\ga,N)>0$ such that $\Phi_\lambda\to \Phi_{*,c}$  in the sense of $\Gamma$-convergence, meaning 
\begin{enumerate} 
\item whenever  $u_\la\to u$ in $L^1_\loc$ 
then $\liminf \Phi_\la[u_\la]\ge \Phi_{*,c}[u]$, and
\item for each $u \in L^1_{\loc}(\R^N)$ there exist $(v_\la)$ with $v_\la\in L^1_\loc(\bbR^N)$, $v_\la\to u$ in $L^1_\loc$ and $\limsup \Phi_\la[v_\la] \le \Phi_{*,c}[u]$.
\end{enumerate} 
This question is especially  meaningful in the  case $p=1$ where the pointwise limit behaves somewhat pathologically. 
 Indeed, recall that for $p=1$, $-1\le \gamma<0$ there is no universal upper bound for $\Phi_\la[u]$ in terms of $\|\nabla u\|_{L^1}$. 
Also when $p=1$ and $\ga \in \bbR\setminus [-1,0]$  the examples in \cref{sec:BV-limit} show that the pointwise limit in $\dot W^{1,1}$ and on $\dot{BV}\setminus \dot W^{1,1}$ may differ (by a multiplicative constant). A remarkable result of Nguyen  \citelist{\cite{Nguyen07}\cite{Nguyen11}} states that $\Phi_\la\to\Phi_{*,c} $ as $\la\to 0$, in the sense of  $\Gamma$--convergence, when $p\ge 1$, and \(\gamma = -p\) for some appropriate constant $c=c(p,N)$; see also  Brezis--Nguyen~\cite{BN2020} (note however that $\dot W^{1,p}$ and $\dot {BV}$ are replaced in these papers by $W^{1,p}$ and $BV$). 

\subsection{More general families of functionals}
Consider a monotone nondecreasing function \(\varphi: \intvr{0}{\infty} \to\intvr{0}{\infty}\) and set (inspired by \citelist{\cite{BN2018}\cite{BN2020}})
\[
 \Psi_{\lambda} [u] \defeq \lambda^p 
 \iint_{\Rset^N \times \Rset^N}
 \varphi \brk*{\frac{\abs{u (x) - u (y)}}{\lambda \abs{x- y}^{1 + \gamma/p}}} \abs{x - y}^{\gamma - N} \dif x \dif y.
\] 
The family \(\Phi_{\lambda}\) in \cref{sec:Gammaconv} corresponds to \(\varphi =\bbone_{(1,\infty)} \). It is an interesting generalization of the above  problems  to study  the limit of $\Psi_\la$ as $\la\searrow 0$ when \(\gamma < 0\) and the limit of $\Psi_\la$ as $\la\to \infty$ when \(\gamma > 0\),  both  in the sense of pointwise convergence or in the sense of $\Gamma$-convergence. 
A formal computation suggests that our \cref{theorem_weighted_reverse} should go  over modulo a factor \(\int_0^\infty \frac{\varphi (s)}{s^{p + 1}} \dif s\) (see \cite{BN2020}). We refer to \cite{BN2018} for a further  discussion of applications.

\begin{bibdiv}
\begin{biblist}

\bib{Bourgain_Brezis_Mironescu_2001}{article}{
  author={Bourgain, Jean},
  author={Brezis, Haim},
  author={Mironescu, Petru},
  title={Another look at Sobolev spaces},
  conference={
    title={Optimal control and partial differential equations},
  },
  book={
    publisher={IOS, Amsterdam},
  },
  date={2001},
  pages={439--455},
%   review={\MR{3586796}},
}

\bib{Bourgain_Nguyen}{article}{
    AUTHOR = {Bourgain, Jean},
    author = {Nguyen, Hoai-Minh},
     TITLE = {A new characterization of {S}obolev spaces},
   JOURNAL = {C. R. Math. Acad. Sci. Paris},
  FJOURNAL = {Comptes Rendus Math\'{e}matique. Acad\'{e}mie des Sciences. Paris},
    VOLUME = {343},
      YEAR = {2006},
    NUMBER = {2},
     PAGES = {75--80},
      ISSN = {1631-073X},
   MRCLASS = {46E35},
  MRNUMBER = {2242035},
MRREVIEWER = {Dorothee D. Haroske},
       DOI = {10.1016/j.crma.2006.05.021},
       URL = {https://doi-org.ezproxy.library.wisc.edu/10.1016/j.crma.2006.05.021},
}		
		
\bib{Brezis_2002}{article}{
   author={Brezis, Haim},
   title={How to recognize constant functions. A connection with Sobolev
   spaces},
   language={Russian},
   journal={Uspekhi Mat. Nauk},
   volume={57},
   date={2002},
   number={4(346)},
   pages={59--74},
   issn={0042-1316},
   translation={
      journal={Russian Math. Surveys},
      volume={57},
      date={2002},
      number={4},
      pages={693--708},
      issn={0036-0279},
   },
%    review={\MR{1942116}},
   doi={10.1070/RM2002v057n04ABEH000533},
}

\bib{Brezis_2011}{book}{
  author={Brezis, Haim},
  title={Functional analysis, Sobolev spaces and partial differential equations},
  series={Universitext},
  publisher={Springer, New York},
  date={2011},
  pages={xiv+599},
  isbn={978-0-387-70913-0},
%   review={\MR{2759829}},
}

\bib{BN2018}{article}{
   author={Brezis, Ha\"{\i}m},
   author={Nguyen, Hoai-Minh},
   title={Non-local functionals related to the total variation and
   connections with image processing},
   journal={Ann. PDE},
   volume={4},
   date={2018},
   number={1},
   pages={Art. 9, 77},
   issn={2524-5317},
%    review={\MR{3749763}},
   doi={10.1007/s40818-018-0044-1},
}

\bib{BN2020}{article}{
   author={Brezis, Ha\"{\i}m},
   author={Nguyen, Hoai-Minh},
   title={Non-local, non-convex functionals converging to Sobolev norms},
   journal={Nonlinear Anal.},
   volume={191},
   date={2020},
   pages={111626, 9},
   issn={0362-546X},
%    review={\MR{4011115}},
   doi={10.1016/j.na.2019.111626},
}

\bib{Brezis_VanSchaftingen_Yung_2021}{article}{
  title={A surprising formula for Sobolev norms},
  author={Brezis, Ha\"{\i}m},
  author={Van Schaftingen, Jean},
  author={Yung, Po-Lam},
  doi={10.1073/pnas.2025254118},
  journal={Proc. Natl. Acad. Sci. USA},
  year={2021},
  volume={118},
  number={8},
  pages={e2025254118},
}

\bib{Brezis_VanSchaftingen_Yung_2021Lorentz}{article}{
     TITLE = {Going to {L}orentz when fractional {S}obolev, {G}agliardo and
              {N}irenberg estimates fail},
   author={Brezis, Ha\"{\i}m},
  author={Van Schaftingen, Jean},
  author={Yung, Po-Lam},            
   JOURNAL = {Calc. Var. Partial Differential Equations},
    VOLUME = {60},
      YEAR = {2021},
    NUMBER = {4},
     PAGES = {Paper No. 129, 12},
      ISSN = {0944-2669},
       DOI = {10.1007/s00526-021-02001-w},
       URL = {https://doi-org.ezproxy.library.wisc.edu/10.1007/s00526-021-02001-w},
}

\bib{Davila}{article}{
   author={D\'{a}vila, J.},
   title={On an open question about functions of bounded variation},
   journal={Calc. Var. Partial Differential Equations},
   volume={15},
   date={2002},
   number={4},
   pages={519--527},
   issn={0944-2669},
   review={\MR{1942130}},
   doi={10.1007/s005260100135},
}

\bib{dominguez-milman}{article}{
 author = {Dom\'inguez, \'O.},
 author = {Milman, M.},
  title  = {New Brezis-Van Schaftingen-Yung-Sobolev type inequalities connected with maximal inequalities and one parameter families of operators},  
  year ={2020},
  journal= {Preprint, {arxiv:2010.15873}},
}

\bib{Federer}{book}{
   author={Federer, Herbert},
   title={Geometric measure theory},
   series={Die Grundlehren der mathematischen Wissenschaften, Band 153},
   publisher={Springer-Verlag New York Inc., New York},
   date={1969},
   pages={xiv+676},
%    review={\MR{0257325}},
}
		
\bib{hajlasz-kalamajska}{article}{
    AUTHOR = {Haj\l asz, P.},
    Author = {Ka\l amajska, A.},
     TITLE = {Polynomial asymptotics and approximation of {S}obolev  functions},
   JOURNAL = {Studia Math.},
  FJOURNAL = {Studia Mathematica},
    VOLUME = {113},
      YEAR = {1995},
    NUMBER = {1},
     PAGES = {55--64},
      ISSN = {0039-3223},
   MRCLASS = {46E35 (41A10)},
  MRNUMBER = {1315521},
MRREVIEWER = {J. Horv\'{a}th},
}

\bib{Hunt_1966}{article}{
  author={Hunt, Richard A.},
  title={On \(L(p, q)\) spaces},
  journal={Enseign. Math. (2)},
  volume={12},
  date={1966},
  pages={249--276},
  issn={0013-8584},
%   review={\MR{223874}},
}

\bib{mattila}{book}{,
    AUTHOR = {Mattila, Pertti},
     TITLE = {Fourier analysis and {H}ausdorff dimension},
    SERIES = {Cambridge Studies in Advanced Mathematics},
    VOLUME = {150},
 PUBLISHER = {Cambridge University Press, Cambridge},
      YEAR = {2015},
     PAGES = {xiv+440},
      ISBN = {978-1-107-10735-9},
   MRCLASS = {28-02 (28A15 28A78 28A80 42B10 60J65)},
  MRNUMBER = {3617376},
MRREVIEWER = {Benjamin Steinhurst},
       DOI = {10.1017/CBO9781316227619},
       URL = {https://doi-org.ezproxy.library.wisc.edu/10.1017/CBO9781316227619},
}

\bib{Nguyen06}{article}{
   author={Nguyen, Hoai-Minh},
   title={Some new characterizations of Sobolev spaces},
   journal={J. Funct. Anal.},
   volume={237},
   date={2006},
   number={2},
   pages={689--720},
   issn={0022-1236},
%    review={\MR{2230356}},
   doi={10.1016/j.jfa.2006.04.001},
}

\bib{Nguyen07}{article}{
   author={Nguyen, Hoai-Minh},
   title={$\Gamma$-convergence and Sobolev norms},
%    language={English, with English and French summaries},
   journal={C. R. Math. Acad. Sci. Paris},
   volume={345},
   date={2007},
   number={12},
   pages={679--684},
   issn={1631-073X},
%    review={\MR{2376638}},
   doi={10.1016/j.crma.2007.11.005},
}

\bib{Nguyen08}{article}{
   author={Nguyen, Hoai-Minh},
   title={Further characterizations of Sobolev spaces},
   journal={J. Eur. Math. Soc. (JEMS)},
   volume={10},
   date={2008},
   number={1},
   pages={191--229},
   issn={1435-9855},
   review={\MR{2349901}},
}

\bib{Nguyen11}{article}{
   author={Nguyen, Hoai-Minh},
   title={$\Gamma$-convergence, Sobolev norms, and BV functions},
   journal={Duke Math. J.},
   volume={157},
   date={2011},
   number={3},
   pages={495--533},
   issn={0012-7094},
%    review={\MR{2785828}},
   doi={10.1215/00127094-1272921},
}

\bib{poliakovsky}{article}{
 author = {A. Poliakovsky},
  title  = {Some remarks on a formula for Sobolev norms due to Brezis, Van Schaftingen and Yung},
  year ={2021},
  journal= {Preprint, {arxiv:2102.00557}},
}

\bib{ssvy}{article} {
author = {Seeger, Andreas},
author = {Street, Brian},
author = {Van Schaftingen, Jean},
author = {Yung, Po-Lam},
title = {On spaces of Besov-Sobolev type},
journal = {Preprint},
}

\bib{stein-diff}{book}{
    AUTHOR = {Stein, Elias M.},
     TITLE = {Singular integrals and differentiability properties of
              functions},
    SERIES = {Princeton Mathematical Series, No. 30},
 PUBLISHER = {Princeton University Press, Princeton, N.J.},
      YEAR = {1970},
     PAGES = {xiv+290},
   MRCLASS = {46.38 (26.00)},
  MRNUMBER = {0290095},
MRREVIEWER = {R. E. Edwards},
}
		
\bib{stein-weiss}{book}{
    author = {Stein, Elias M.},
    author= {Weiss, Guido},
     title = {Introduction to {F}ourier analysis on {E}uclidean spaces},
      note = {Princeton Mathematical Series, No. 32},
 publisher = {Princeton University Press, Princeton, N.J.},
      year = {1971},
     pages = {x+297},
   mrclass = {42A92 (31B99 32A99 46F99 47G05)},
  mrnumber = {0304972},
mrreviewer = {Edwin Hewitt},
}

\end{biblist}
\end{bibdiv}
%\newpage
\setlength{\parskip}{1pt}
\end{document}